	\theoremstyle{plain}
		\newtheorem{thm}{Theorem}[section]	% numbered within each section
		\newtheorem{lem}[thm]{Lemma}		% numbered along with Theorem
		\newtheorem{prop}[thm]{Proposition}
		\newtheorem*{teo}{Theorem}
	\theoremstyle{definition}
		\newtheorem{defn}[thm]{Definition}	% numbered along with Theorem
		\newtheorem{ex}[thm]{Example}		% numbered along with Theorem
	\theoremstyle{remark}
		\newtheorem{rmk}[thm]{Remark}		% numbered along with Theorem
\numberwithin{equation}{section}	% equation numbering
\newcommand{\C}{\mathbb{C}}		% the complex plane
\renewcommand{\H}{\mathscr{H}}
\renewcommand{\L}{\mathscr{L}}	% Lagrangiana
\newcommand{\N}{\mathbb{N}}		% the natural numbers
\renewcommand{\P}{\mathscr{P}}
\newcommand{\R}{\mathbb{R}}		% the real line
\newcommand{\U}{\mathbb{U}}		% gruppo unitario
\newcommand{\Z}{\mathbb{Z}}		% the integer numbers
\newcommand{\GL}{\mathrm{GL}}
\newcommand{\SO}{\mathrm{SO}}
\newcommand{\Sp}{\mathrm{Sp}}
\newcommand{\Mat}{\mathrm{Mat}}
\newcommand{\Gr}{\mathrm{Gr}}
\let\d\relax
\newcommand{\d}{\mathrm{d}}				% differenziale (esterno)
\newcommand{\eps}{\varepsilon}
\newcommand{\norm}[1]{\left\lVert #1 \right\rVert}			% norma
\newcommand{\abs}[1]{\left\lvert #1 \right\rvert}				% valore assoluto
\newcommand{\trasp}[1]{{#1}^\mathsf{T}}					% trasposto
\newcommand{\traspinv}[1]{{#1}^\mathsf{-T}}				% trasposto inverso
\newcommand{\Uhom}{U_{\alpha}}
\newcommand{\Ulog}{U_{\log}}
\newcommand{\iMor}{i_{\textup{Morse}}}		% indice di Morse
\newcommand{\igeo}{i_{\textup{geo}}}		% indice geometrico
\newcommand{\igeomega}{i_{\textup{geo,$\omega$}}}		% omega-indice geometrico
\newcommand{\Lag}{\mathrm{Lag}}
\newcommand{\iclm}{i_{\textup{CLM}}}		% indice di Cappell-Lee-Miller
\DeclareMathOperator{\diag}{diag}		% diagonal matrix
\DeclareMathOperator{\sgn}{sgn}		% signature
\DeclareMathOperator{\tr}{tr}			% trace
\renewcommand{\leq}{\leqslant}
\renewcommand{\geq}{\geqslant}
\renewcommand{\hat}{\widehat}
\renewcommand{\tilde}{\widetilde}
\renewcommand{\=}{\coloneqq}			% definisce :=
\newcommand{\eq}{\eqqcolon}			% definisce =:
\newcommand{\ie}{i.e.~}
\newcommand{\email}[1]{\href{mailto:#1}{\textsf{#1}}}
\title{Morse index and linear stability of the Lagrangian circular orbit in a three-body-type problem via index theory}
\author{Vivina Barutello, Riccardo D.~Jadanza, Alessandro Portaluri\thanks{The authors are partially supported by the project ERC Advanced Grant 2013 n.~339958 ``Complex Patterns for Strongly Interacting Dynamical Systems --- COMPAT''.} }
\date{\today}
\begin{document}

	\maketitle

	\begin{abstract}
		It is well known that the linear stability of the Lagrangian elliptic solutions in the classical planar three-body problem depends on a mass parameter $\beta$ and on the eccentricity $e$
		of the orbit.
		We consider only the circular case ($e = 0$) but under the action of a broader family of singular potentials: $\alpha$-homogeneous potentials, for $\alpha \in (0,2)$, and
		the logarithmic one. It turns out indeed that the Lagrangian circular orbit persists also in this more general setting.
		
		We discover a region of linear stability expressed in terms of the homogeneity parameter $\alpha$ and the mass parameter $\beta$, then we compute the Morse index of this orbit and of
		its iterates and we find that the boundary of the stability region is the envelope of a family of curves on which the Morse indices of the iterates jump.
		In order to conduct our analysis we rely on a Maslov-type index theory devised and developed by Y.~Long, X.~Hu and S.~Sun; a key role is played by an appropriate index
		theorem and by some precise computations of suitable Maslov-type indices.
	
		\bigskip
		
		\noindent
		\textit{Keywords:} $n$-body problem, $\alpha$-homogeneous potential, logarithmic potential, Morse index, linear stability, Maslov index, Lagrangian solutions, relative equilibrium.
	\end{abstract}

	\footnotetext[1]{\textit{2010 Mathematics Subject Classification:} Primary 58E05, 70H14. 
	Secondary 37J45, 34C25.}

	\section*{Introduction and main results}

	We consider a planar three-body-type problem governed by a \emph{singular potential function} $U : X \subset \R^6 \to \R$,
	where $X \= \Set{ q = (q_1,q_2,q_3) \in \R^6 | q_i \neq q_j\ \forall\, i \neq j }$. We shall deal with homogeneous and logarithmic potentials of the form
		\begin{equation}\label{eq:potentials}
				\Uhom(q) \= \sum_{\substack{i, j = 1\\ i<j}}^3 \frac{m_i m_j} {\abs{q_i - q_j}^{\alpha}}, \quad \alpha \in (0, 2); 
				\qquad
				\Ulog(q) \= \sum_{\substack{i, j = 1\\ i<j}}^3 m_i m_j \log\dfrac{1}{\abs{q_i - q_j}}.
		\end{equation}
	Newton's equations for this problem (which as $U=U_\alpha$ is commonly known as the \emph{generalised $3$-body problem}) are
		\begin{equation}\label{eq:Newtonintro}
			m_i \ddot q_i = \frac{\partial U}{\partial q_i}
		\end{equation}
	and we seek solutions that satisfy periodic boundary conditions. By taking into account the conservation law of the centre of mass we see that the \emph{configuration space} is
	$4$-dimensional and is given by
		\[
			\hat{X} \= \Set{ q \in \R^6 | \sum_{i=1}^3 m_i q_i = 0,\ q_i \neq q_j\ \forall\, i \neq j }.
		\]
	Let $(q, v)$ be an element of the tangent bundle $T\hat{X}$, so that $q \in \hat{X}$ and $v \in T_q \hat{X}$. The \emph{Lagrangian function} $\L \in \mathscr{C}^{\infty}(T\hat{X}, \R)$ is
	given by
		\begin{equation}\label{eq:Lagrangian}
			\L(q, v) \= \frac{1}{2} \sum_{i = 1}^3 m_i \abs{v_i}^2 + U(q),
		\end{equation}
	Let $ W^{1,2}(\R/2\pi\Z, \hat{X})$ be the Sobolev space of $L^2$-loops with weak $L^2$-derivatives and define on it
	the \emph{Lagrangian action functional} $\mathbb{A}$ as
		\begin{equation} \label{eq:action}
			\mathbb{A}(\gamma) \= \int_0^{2\pi} \L\bigl( \gamma(t), \dot{\gamma}(t) \bigr)\, dt,
		\end{equation}
	which is smooth on its domain, since it consists of collisionless loops. Its critical points in this space are the $2\pi$-periodic (classical) solutions of Equations~\eqref{eq:Newtonintro}.
	
	The first solutions of the classical ($U=U_\alpha$ with $\alpha =1$) planar three-body problem have been shown in 1772 by J.-L.~Lagrange \cite{Lagrange:3corps}: for any choice of the
	three masses there exists a family of periodic motions during which the bodies are always arranged in an equilateral triangle that rotates around its barycentre, changing its size but not its
	shape; moreover, each particle describes a Keplerian conic.
	In the special case where the trajectory of each body around the centre of mass is a circle swept with some appropriate angular frequency, Lagrange's triangular solution is an example of
	\emph{relative equilibrium}, called \emph{Lagrange circular orbit}. We observe that this kind of circular motion is maintained also in the case of the more general potentials defined
	in~\eqref{eq:potentials}.
	
	Given a periodic solution of \eqref{eq:Newtonintro}, it is natural to investigate
	its stability properties in order to understand the dynamical behaviour of the orbits nearby. Linear stability of periodic 
	orbits is a paradigm of a complex behaviour of a dynamical system: positive topological entropy, existence of 
	transversal heteroclinic connections and KAM tori, presence of horseshoes.
	Our main concern is the linear stability of these circular Lagrangian solutions. It turns out that it depends on two parameters: the \emph{mass parameter}
		\[
			\beta \= 27\,\dfrac{m_1m_2+m_2m_3+m_1 m_3}{(m_1+m_2+m_3)^2}\in (0,9]
		\]
	and the homogeneity parameter $\alpha \in [0,2)$. Note that we now include the value $\alpha = 0$ because it will be shown that this corresponds to the logarithmic case. These two
	parameters define a family of Lagrangian circular solution, which we denote by $\gamma_{\alpha, \beta}$.
	
	In order to investigate the linear stability of this family we need to reformulate the Newtonian problem \eqref{eq:Newtonintro} in Hamiltonian language.
%		\begin{equation}\label{eq:hamiltonianintro}
%			\begin{cases}
%				\dot z(t) = J \nabla \H(z)\\
%				z(0)=z(2\pi),
%			\end{cases}
%		\end{equation}
%	where $J \=\begin{pmatrix} 0& -I_6\\ I_6 & 0 \end{pmatrix}$ and $z \= \trasp{(\trasp{p}, q)} \in T^*X$, and set the \emph{Hamiltonian function}
%		\[
%			\H(p,q) \= \sum_{i=1}^3 \frac{\abs{p_i}^2}{2m_i} - U(q).
%		\]
	A $2\pi$-periodic solution of this autonomous Hamiltonian system is \emph{spectrally stable} if the spectrum of the monodromy matrix of the corresponding linearised system is contained in
	the unit circle of the complex plane; it is \emph{linearly stable} if in addition such matrix is diagonalisable.
	
	\begin{figure}
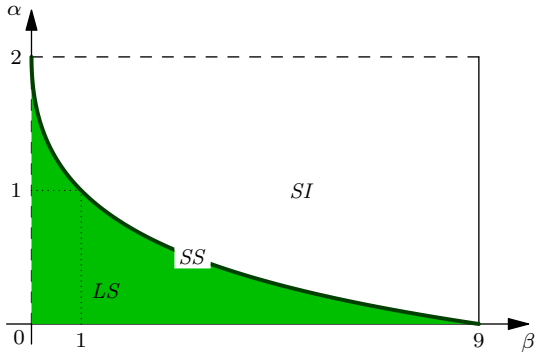

		\centering
			\begin{asy}
				import graph;
		
				size(200, 200*2/3, IgnoreAspect);
				
				real x1(real t) {return 9*(t - 2)^2/(t + 2)^2;}		// Curva di stabilità
				real y(real t) {return t;}					// Serve solo per la parametrizzazione
		
			/// 	Assi coordinati
				xaxis(xmin = -0.5, xmax = 10, arrow=EndArrow);
				yaxis(ymin = 0, ymax=2, dashed);
				
				path ss = buildcycle(graph(x1, y, 0, 2, operator ..), (0,2)--(0,0)--(9,0));
				fill(ss, heavygreen);
				
			///	Disegno curve
				yequals(1, xmin=0, xmax=1, dotted);
				xequals(1, ymin=0, ymax=1, dotted);
				yequals(2, xmin=0, xmax=9, dashed);
				xequals(9, ymin=0, ymax=2);
				draw(graph(x1, y, 0, 2, operator ..), darkgreen+linewidth(1.5));
				draw((0,-0.15)--(0,0));
				draw((0,2)--(0,2.35), arrow = EndArrow);
		
			///	Etichette
				label("$\scriptstyle 0$", (0, 0), SW);
				label("$\scriptstyle 1$", (1, 0), S);
				label("$\scriptstyle 9$", (9, 0), S);
				label("$\scriptstyle \beta$", (10, 0), S);
				label("$\scriptstyle 1$", (0, 1), W);
				label("$\scriptstyle 2$", (0, 2), W);
				label("$\scriptstyle \alpha$", (0, 2.35), W);
				
				label("$\scriptstyle \mathit{SI}$", (5,1), E);
				label("$\scriptstyle \mathit{SS}$", (3.24,0.5), UnFill);
				label("$\scriptstyle \mathit{LS}$", (1.5,0.25));
			\end{asy}
			\caption{Stability regions: spectral instability ($\mathit{SI}$), spectral stability ($\mathit{SS}$) and linear stability ($\mathit{LS}$).
				The curve drawn is the \emph{stability curve} $\beta = 9 \big( \frac{\alpha - 2}{\alpha + 2} \big)^2$, which marks the transition between stability and instability.
				For a fixed value of $\alpha$ the motion becomes linearly stable if $\beta$ is small enough, \ie if there is a dominant mass.
				For a fixed value of $\beta$, linear stability is achieved if $\alpha$ is small enough, \ie if the potential is sufficiently weak.} \label{fig:stab}
	\end{figure}
	
	When analysing $\gamma_{\alpha, \beta}$ we face a very degenerate situation because of the invariance of $n$-body-type problems under the symmetry group of
	Euclidean transformations and the presence of first integrals. It is possible, through a wise change of coordinates originally found by Meyer and Schmidt and here modified, to factorise the
	contributions of these constants of motion and split the phase space into a direct sum of invariant 4-dimensional symplectic subspaces: $T^*X = E_1 \oplus E_2 \oplus E_3$
	(see~\cite{Moeckel:notes, MR2145251, MR3227283}). It turns out that the degeneracy is confined in $E_1$ and partly in $E_2$, whereas $E_3$ contains the essence of the dynamics.	
	More precisely the subspace $E_1$ corresponds to the four integrals of motion of the center of mass, whilst $E_2$ includes the conservation of the angular momentum.
	Furthermore, the restriction of the Hamiltonian to the symplectic invariant subspace $E_2$ of the phase space agrees with the Hamiltonian of a generalised Kepler problem (\ie a Kepler
	problem with potential $U$ of the form~\eqref{eq:potentials}). We note that the eigenvalues of the monodromy matrix restricted to $E_2$ are $1, 1, e^{\pm 2\pi i\sqrt{2-\alpha}}$; hence,
	for any $\alpha \in (0,2)$, the circular solutions of the Kepler-type problem (corresponding to the line $\beta = 0$ in Figure~\ref{fig:stab}) are spectrally stable and $\alpha=2$ is the boundary
	of their stability region (which is also called in the literature \emph{elliptic region}).
	The portrait of the stability properties of $\gamma_{\alpha, \beta}$, which takes into account the essence of the dynamics, taking place on $E_3$, is depicted in Figure~\ref{fig:stab}, where one
	can neatly distinguish three regions: that of spectral instability ($\mathit{SI}$), that of linear stability ($\mathit{LS}$) and the curve of spectral stability ($\mathit{SS}$) that separates them.	
	
	A very deep and intriguing question is the relation between the linear stability of a periodic solution or of a closed geodesic and the Morse index of its iterations \cite{MR0090730}: a famous
	result by H.~Poincaré states that every closed minimising geodesic on a Riemannian surface is unstable. Motivated by this question we computed the Morse index of the Lagrangian circular
	orbit in the free loop space of $\hat{X}$. Very few results are known about this topic; a classical one is due to W.~B.~Gordon \cite{MR0502484}, who proved that the minimisers of
	the Lagrangian action functional for the Kepler problem on the subspace of $W^{1,2}(\R/2\pi\Z, \R^2 \setminus \{0\})$ of loops with winding number $\pm 1$ with respect to the 
	origin are the ellipses.
	{S.~Zhang and Q.~Zhou \cite{MR1852963} and A.~Venturelli \cite{MR1841900} proved in 2001 that the Lagrangian equilateral triangle solutions of the $3$-body problem are minimisers of the
	corresponding action functional with $\alpha = 1$.}
	However M.~Ramos and S.~Terracini showed in \cite{MR1329405} a sort of double variational characterisation of the set of all periodic solutions of the
	$\alpha$\nobreakdash-homogeneous Kepler problem; this can give a heuristic explanation of the degeneracy occurring at $\alpha=1$. In \cite{Venturelli:phd} Venturelli proved that for
	$\alpha \in (1,2)$ and winding numbers $\pm 1$ the minimisers are precisely the circular solutions, whilst for $\alpha \in (0,1)$ the minima are attained by the ejection\nobreakdash-collision
	solutions. He left, however, completely open the problem of computing the Morse index of the circular solutions in the case $\alpha \in (0,1)$.
	
	Our first main result concerns the computation of the Morse index of the circular solution of Kepler-type problems (we write $\gamma_{\alpha,0}$ for the Keplerian trajectory, in view of the
	formal correspondence with the case $\beta = 0$). As already observed, this means to compute the Morse index of the restriction of $\gamma_{\alpha,\beta}$ to the subspace $E_2$
	(see Figure~\ref{fig:MaslovE2}). Note that this quantity does not depend on $\beta$; however, we represent its values in the plane $(\beta, \alpha)$ in order to relate them more clearly with the
	restriction of the system to $E_3$: the Morse index of the original problem is indeed given by the sum of the indices of the restrictions and it is easy to visualise this with the superposition of
	the graphs.
	
	\begin{teo}
		The Morse index of the circular solution ($\gamma_{\alpha, 0}$) of the generalised Kepler problem is
			\[
				\iMor(\gamma_{\alpha, 0}) = \begin{cases}
							0 & \text{if } \alpha \in [1, 2) \\
							2 & \text{if } \alpha \in [0, 1).
						\end{cases}
			\]
	\end{teo}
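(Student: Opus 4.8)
The plan is to identify $\iMor(\gamma_{\alpha,0})$ with the number of negative directions of the second variation of the action at the circular solution of the generalised Kepler problem on $E_2$ (a planar problem with a single moving point), and to count them by Fourier analysis. First I would normalise so that the orbit has period $2\pi$, hence angular velocity $\omega = 1$, and pass to the uniformly rotating frame $\gamma(t) = e^{tJ}\tilde\gamma(t)$, where $J = \left(\begin{smallmatrix}0 & -1 \\ 1 & 0\end{smallmatrix}\right)$; since $e^{2\pi J} = \mathrm{Id}$ this is a diffeomorphism of $W^{1,2}(\R/2\pi\Z,\R^2\setminus\{0\})$ and therefore preserves the Morse index. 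In these coordinates the orbit becomes the constant loop $\tilde\gamma\equiv\tilde\gamma_0$, a critical point of $\int_0^{2\pi}\bigl(\tfrac12\abs{\dot{\tilde\gamma}}^2 + \dot{\tilde\gamma}\cdot J\tilde\gamma + U_{\mathrm{eff}}(\tilde\gamma)\bigr)\,dt$ with amended potential $U_{\mathrm{eff}}(q) = U(q) + \tfrac12\abs{q}^2$. The relative-equilibrium relation $\nabla U_{\mathrm{eff}}(\tilde\gamma_0) = 0$ fixes $\abs{\tilde\gamma_0}$ and, together with a short computation of $\mathrm{Hess}\,U_{\mathrm{eff}}(\tilde\gamma_0)$, shows that in the orthonormal frame aligned with the radial and tangential directions this Hessian is $A = \diag(\alpha + 2,\,0)$ --- with the convention $\alpha = 0$ for the logarithmic potential, which fits this formula continuously.

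Then the index form is $Q(\eta) = \tfrac12\int_0^{2\pi}\bigl(\abs{\dot\eta}^2 + 2\,\dot\eta\cdot J\eta + \eta\cdot A\eta\bigr)\,dt$ on $W^{1,2}(\R/2\pi\Z,\R^2)$, with associated self-adjoint operator $\mathcal L\eta = -\ddot\eta - 2J\dot\eta + A\eta$. Expanding $\eta(t) = \sum_{k\in\Z} e^{ikt}v_k$ diagonalises $\mathcal L$ into the Hermitian $2\times 2$ blocks $M_k = k^2 I + A - 2k(iJ) = \left(\begin{smallmatrix} k^2+\alpha+2 & 2ki \\ -2ki & k^2 \end{smallmatrix}\right)$, so that $\iMor(\gamma_{\alpha,0})$ is the sum over $k$ of the numbers of negative eigenvalues of the $M_k$, counted with the doubling that comes from pairing the modes $\pm k$ for $k\neq 0$. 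Since $\tr M_k = 2k^2 + \alpha + 2 > 0$ and $\det M_k = k^2\bigl(k^2 - (2-\alpha)\bigr)$, the block $M_0 = \diag(\alpha+2,0)$ contributes only a one-dimensional kernel, while for $k\neq 0$ a single negative eigenvalue occurs exactly when $k^2 < 2-\alpha$. For $\alpha\in[1,2)$ one has $2-\alpha\in(0,1]$, so no $k\neq 0$ qualifies and $\iMor = 0$; for $\alpha\in[0,1)$ one has $2-\alpha\in(1,2]$, so only $k = \pm 1$ qualifies and the resulting two-dimensional negative eigenspace gives $\iMor = 2$. At the borderline $\alpha = 1$ one has $\det M_{\pm1} = 0$, which only produces extra nullity (the classical degeneracy of the Kepler orbit) and leaves the index equal to $0$, consistently with $1\in[1,2)$.

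The step that I expect to demand the most care is the bookkeeping in setting up $Q$: obtaining $A = \diag(\alpha+2,0)$ depends on using the relative-equilibrium relation to eliminate $\omega$ and the radius, and the exact coefficient $2$ in front of the gyroscopic term $\dot\eta\cdot J\eta$ must be tracked precisely, since any slip there would move the threshold $k^2 = 2-\alpha$ and hence the location of the index jump at $\alpha = 1$. As a consistency check --- and to connect with the Maslov-index machinery that underlies the rest of our analysis --- I would re-derive the same values from the index theorem of Long, Hu and Sun applied to the symplectic path on $E_2$, whose monodromy has eigenvalues $1,1,e^{\pm 2\pi i\sqrt{2-\alpha}}$: the parabolic $\{1,1\}$ block contributes a constant, while the elliptic block contributes a Conley--Zehnder-type index that jumps precisely when $\sqrt{2-\alpha}$ crosses the integer $1$, i.e.~when $\alpha$ crosses $1$, the two together yielding $0$ on $[1,2)$ and $2$ on $[0,1)$.
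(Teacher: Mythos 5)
Your proposal is correct, but it proves the theorem by a genuinely different route than the one used here. You compute the Morse index directly: passing to the rotating frame, using the relative-equilibrium relation to reduce the Hessian of the amended potential to $\diag(\alpha+2,0)$ (which indeed coincides with the matrix $S_2$ appearing in our Lagrangian $\L_2$), and then diagonalising the index form by Fourier modes into the Hermitian blocks $M_k$ with $\det M_k=k^2\bigl(k^2-(2-\alpha)\bigr)$, so that the only negative directions come from $k=\pm 1$ precisely when $\alpha<1$; the gyroscopic coefficient and the threshold $k^2=2-\alpha$ check out, and the counting convention (real index $=m^-(M_0)+2\sum_{k\geq 1}m^-(M_k)$) gives $0$ on $[1,2)$ and $2$ on $[0,1)$, with the $\alpha=1$ degeneracy correctly relegated to the nullity. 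The paper instead never touches the spectrum of the second variation directly: it invokes the Morse Index Theorem (Lemma~\ref{thm:indextheorem}) to replace $\iMor(\gamma_{\alpha,0})$ by the Maslov index $i_1(\phi_2)$ of the fundamental solution of the linearised Hamiltonian system on $E_2$, and computes the latter by a symplectic conjugation and an admissible homotopy splitting $\phi_2\sim R_\alpha\diamond N_\alpha$, evaluating $i_1(R_\alpha)$ and $i_1(N_\alpha)$ through CLM crossing forms (Examples~\ref{ex:Ralpha} and \ref{ex:Nalpha}), the delicate point being the degenerate path $N_\alpha$ lying inside the Maslov cycle. Your spectral argument is more elementary and self-contained for this particular statement, and it is essentially equivalent to the consistency check you sketch at the end; the symplectic route is preferred here because the same objects ($\omega$-indices, splitting numbers, Bott--Long iteration) are needed anyway for the iterates $\gamma^k_{\alpha,0}$ and for the restriction to $E_3$, where a direct eigenvalue count would have to be redone mode by mode and where the degeneracies on the stability curve are handled more transparently by the index machinery. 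If you wished to push your method further, note that the $\omega$-Morse indices would also be accessible by imposing $\xi(2\pi)=\omega\,\xi(0)$, which shifts the Fourier exponents and hence the blocks, but the bookkeeping quickly becomes the same as the splitting-number analysis.
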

	
	\begin{figure}[tb]
	\centering
	\subfloat[][Values of $\iMor(\gamma_{\alpha, 0})$. On the line $\alpha = 1$ it is equal to $0$.\label{fig:MaslovE2}]{\includegraphics[width=0.45\textwidth]{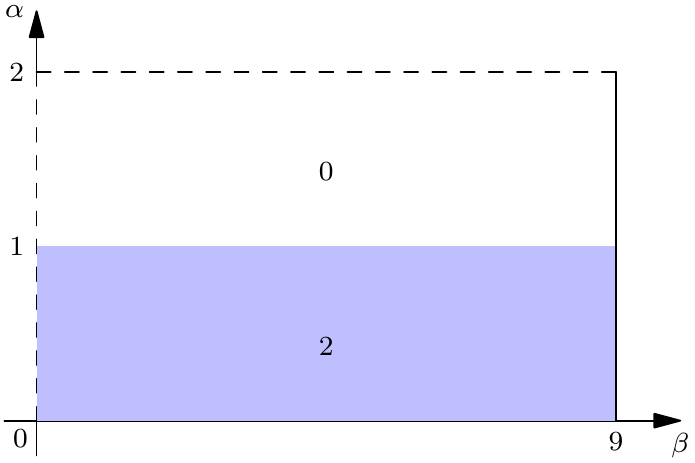}} \qquad
	\subfloat[][Values of $\iMor(\gamma_{\alpha, \beta})$. For a fixed $\alpha$, the Morse index is a monotone decreasing function of $\beta$ that attains the minimum value in
					correspondence of equal masses. The dotted curve is the stability curve.\label{fig:iMor}]{\includegraphics[width=0.45\textwidth]{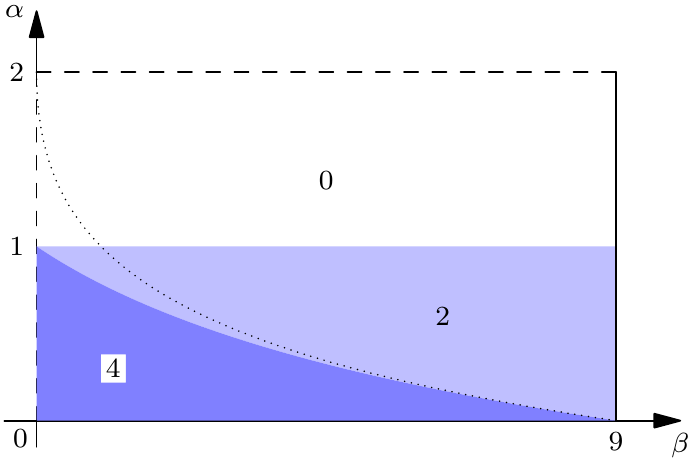}}
	\caption{Values of the Morse index of the generalised Kepler problem (a) and of the Lagrangian circular solution (b).} \label{fig:iMorse}
	\end{figure}
	
	We can then go further by computing the Morse index of any $k$-th iteration $\gamma^k_{\alpha,0}$ of $\gamma_{\alpha,0}$ for $k \in \N$, $k>1$; this is made possible by the
	$\omega$-index theory and the Bott-Long iteration formula. What we obtain is that $\iMor(\gamma^k_{\alpha, 0})$ is a piecewise constant and non-increasing function of $\alpha$ for every
	fixed $k > 1$. In particular, for any fixed $k$, there exists an interval $\big( 2 - \frac{1}{k^2}, 2 \big)$ on which $\iMor(\gamma^k_{\alpha, 0})=0$.
	On the other hand, for any fixed value of $\alpha$, the quantity $\iMor(\gamma^k_{\alpha, 0})$ diverges to $+\infty$ as $k \to +\infty$. Let us observe that $\alpha_k \= 2 - \frac{1}{k^2}$ tends to
	the value 2 as $k$ diverges: this means that the jumps of the Morse index tend to the boundary of the stability region for the Kepler-type problem. See Figure~\ref{fig:iterates} for some
	examples.
	
	\begin{figure}[p]
	\centering
	\subfloat[][$k=1$.]{\includegraphics[width=0.45\textwidth]{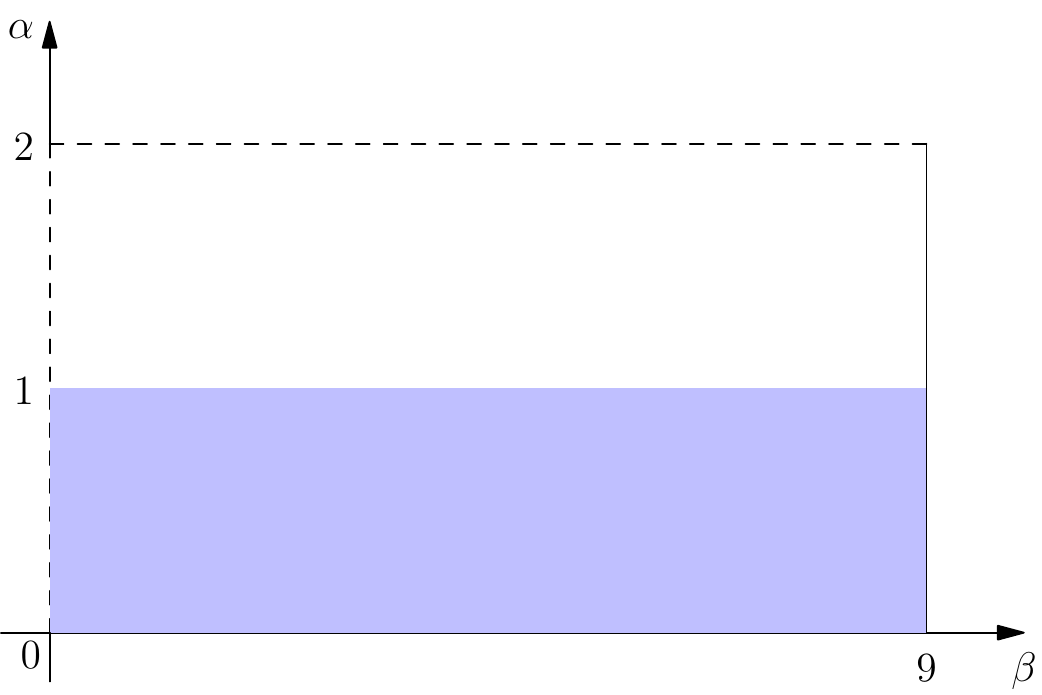}} \quad
	\subfloat[][$k=2$.]{\includegraphics[width=0.45\textwidth]{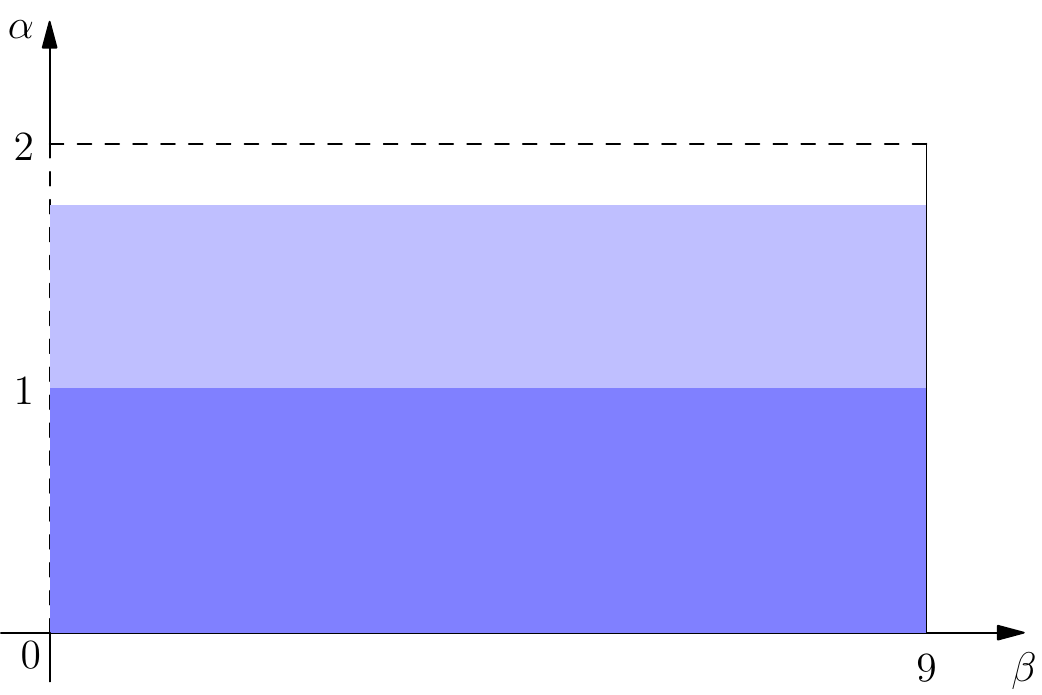}} \\
	\subfloat[][$k=3$.]{\includegraphics[width=0.45\textwidth]{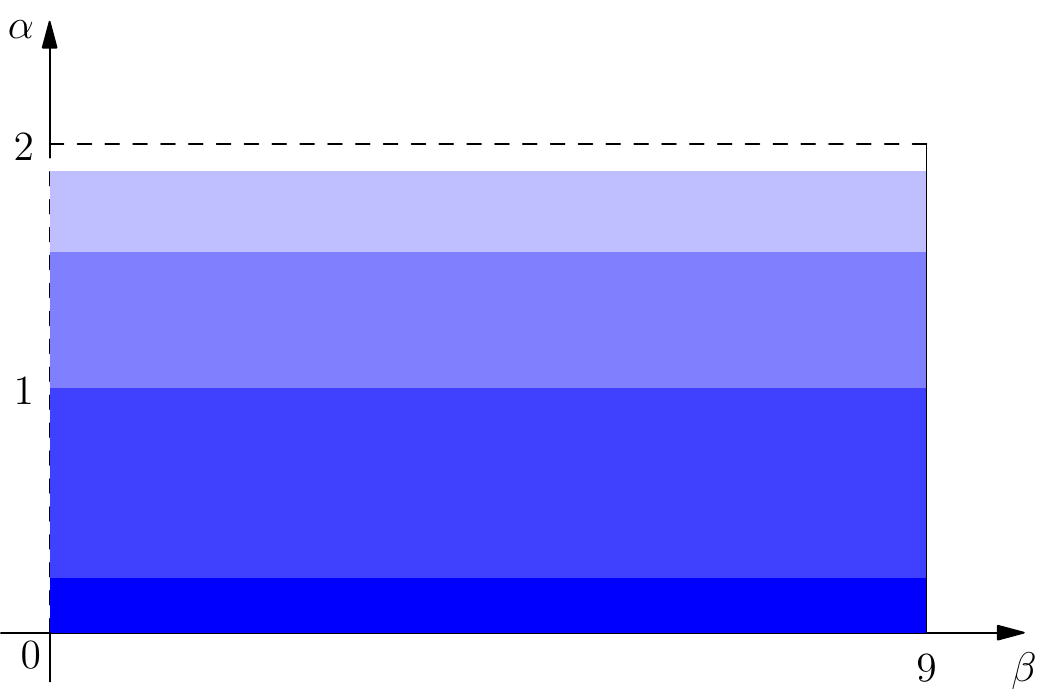}} \quad
	\subfloat[][$k=4$.]{\includegraphics[width=0.45\textwidth]{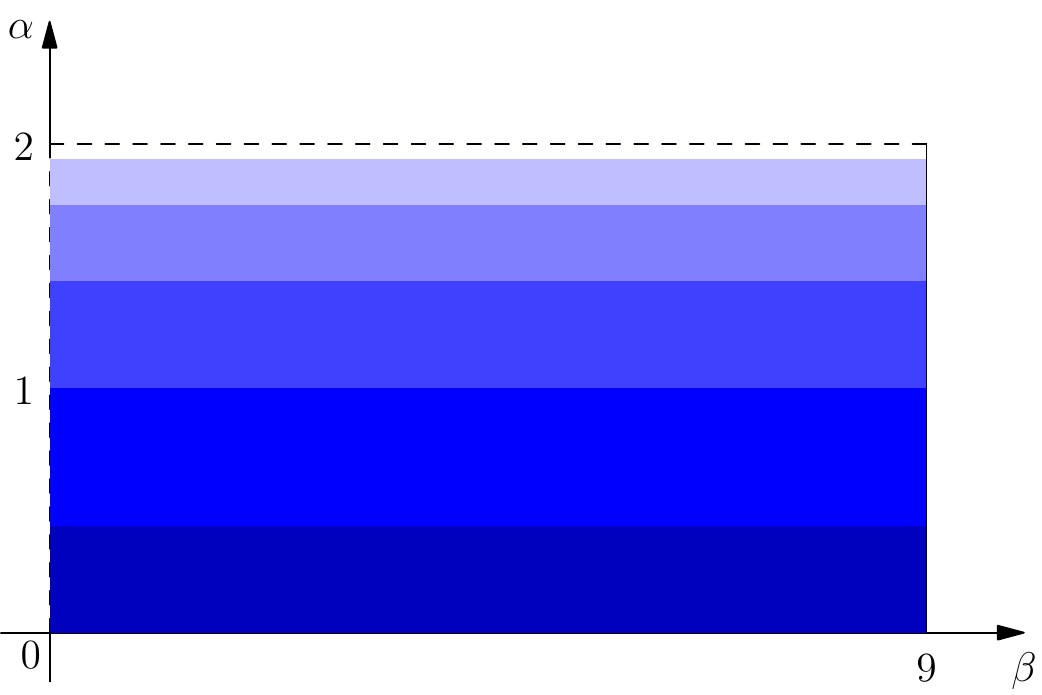}} \\
	\subfloat[][$k=6$.]{\includegraphics[width=0.45\textwidth]{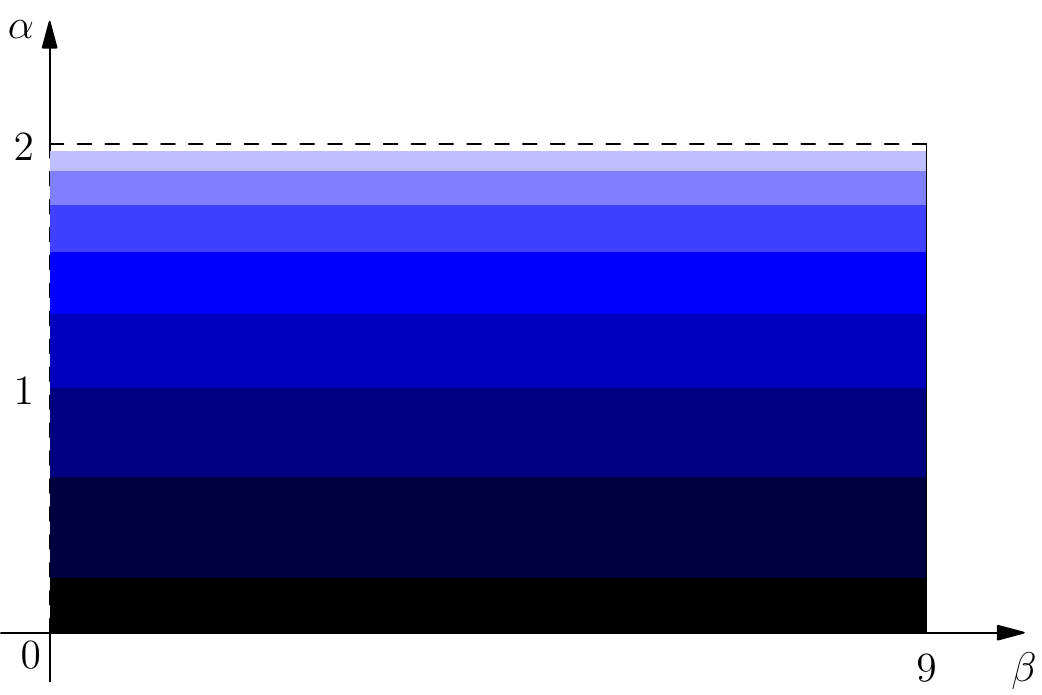}} \quad
	\subfloat[][$k=8$.]{\includegraphics[width=0.45\textwidth]{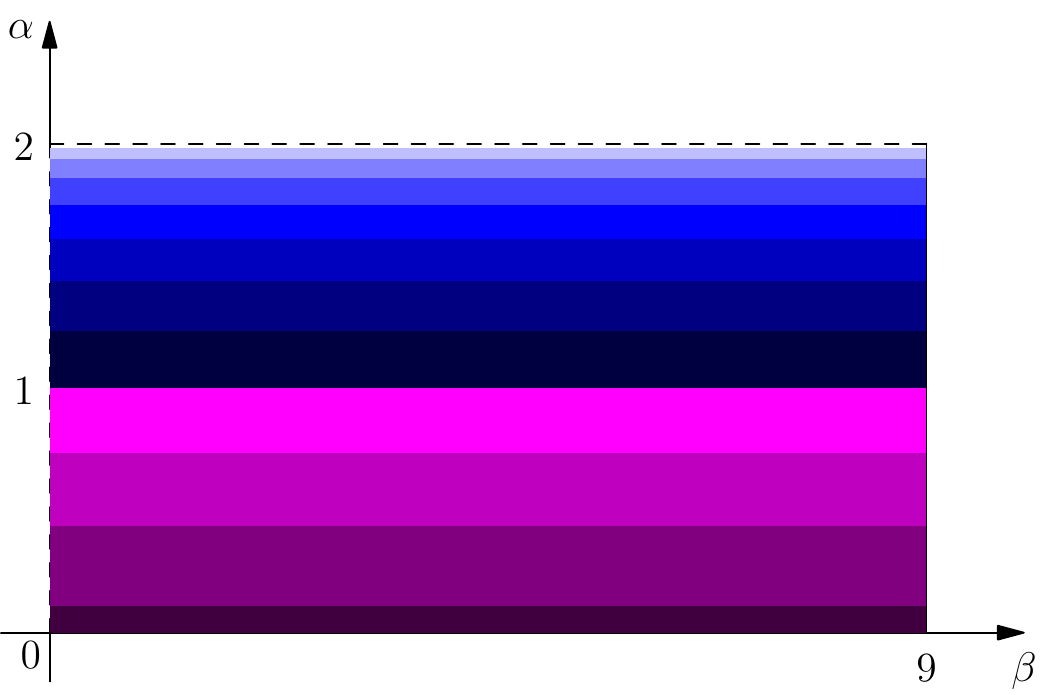}}
	\caption{Values of the Morse index of the $k$-th iteration of the Kepler circular orbit $\gamma^k_{\alpha,0}$ for some values of $k$. The white upper band in each subfigure represents
			the value $0$; going downwards and passing through the lower boundary of each band increases the Morse index by $2$. As $k$ increases there is an accumulation of bands
			at the value $\alpha = 2$.} \label{fig:iterates}
	\end{figure}
	
	\begin{figure}[p]
	\centering
	\subfloat[][$k=1$.]{\includegraphics[width=0.45\textwidth]{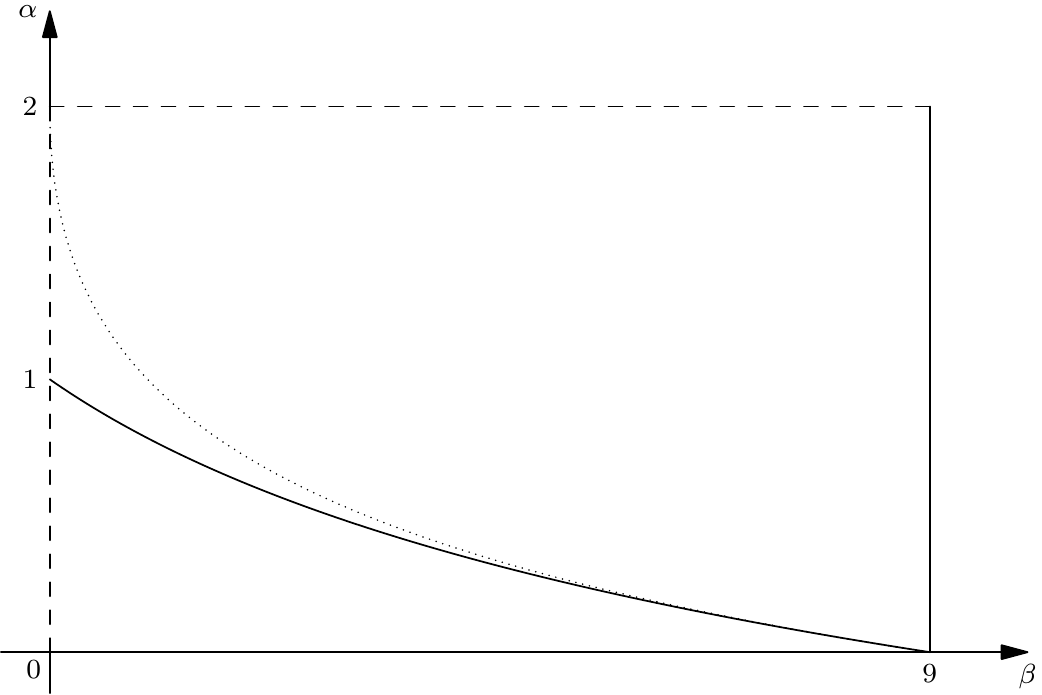}} \quad
	\subfloat[][$k=2$.]{\includegraphics[width=0.45\textwidth]{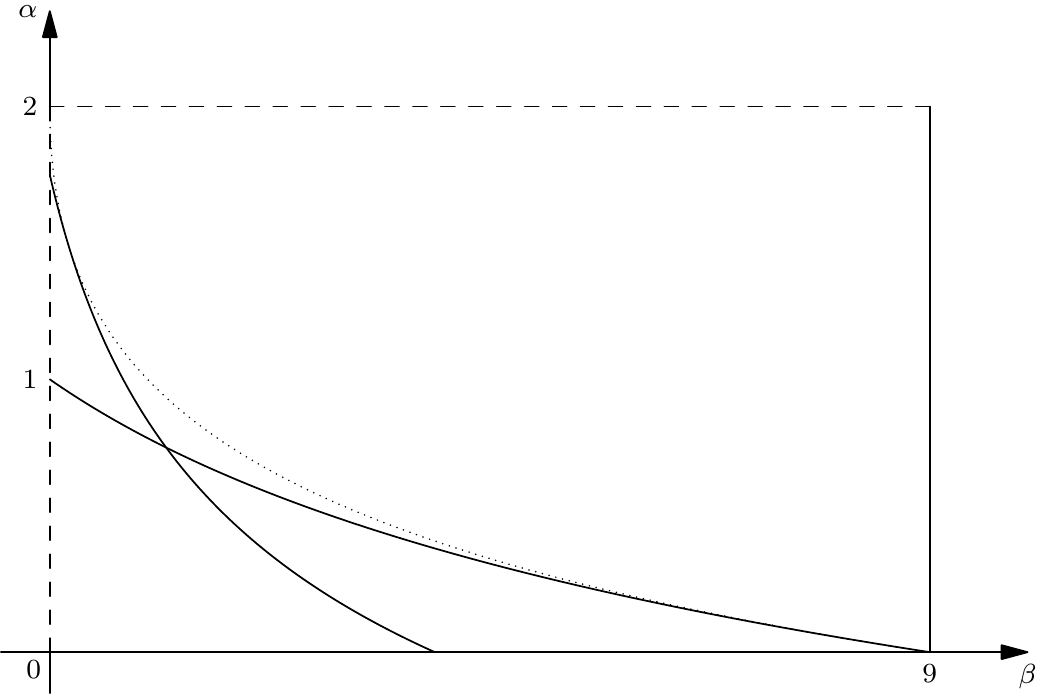}} \\
	\subfloat[][$k=3$.]{\includegraphics[width=0.45\textwidth]{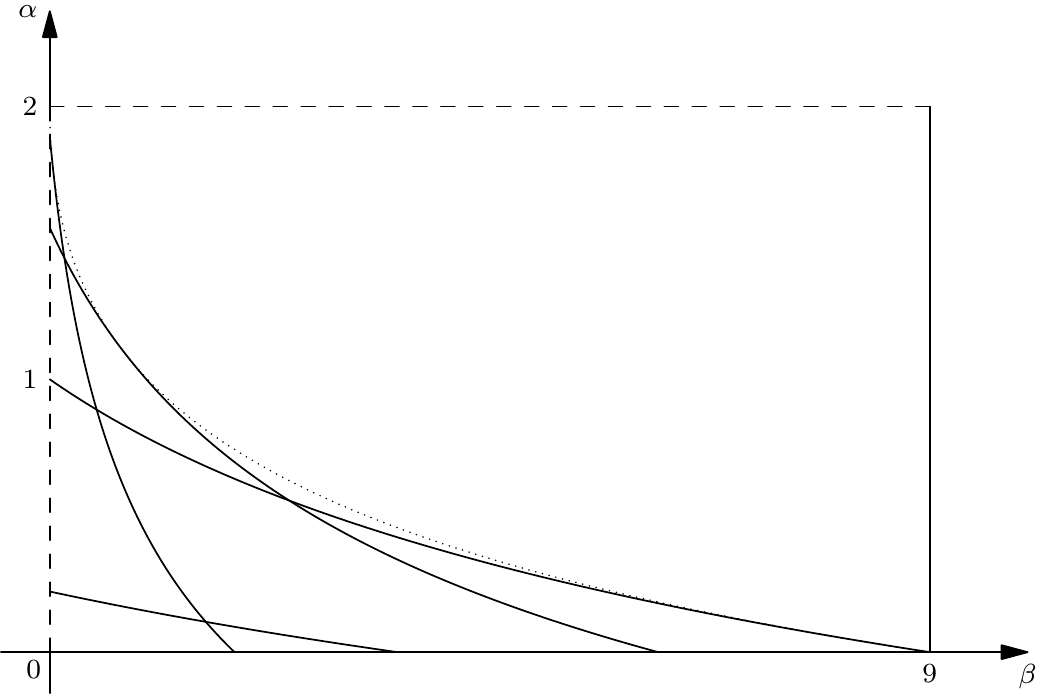}} \quad
	\subfloat[][$k=4$.]{\includegraphics[width=0.45\textwidth]{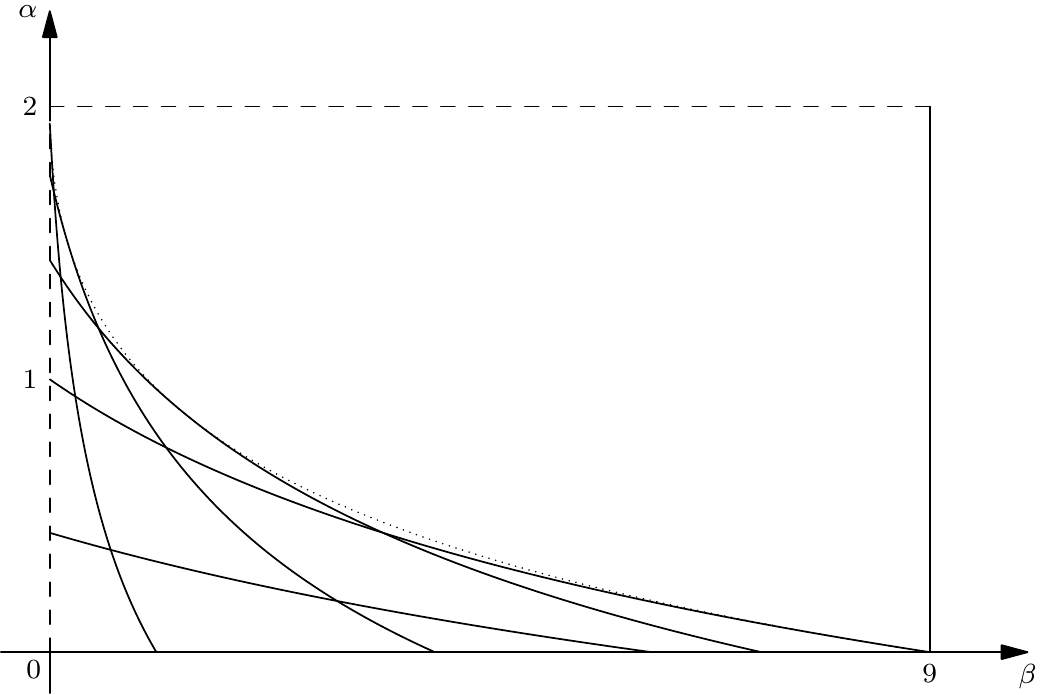}} \\
	\subfloat[][$k=6$.]{\includegraphics[width=0.45\textwidth]{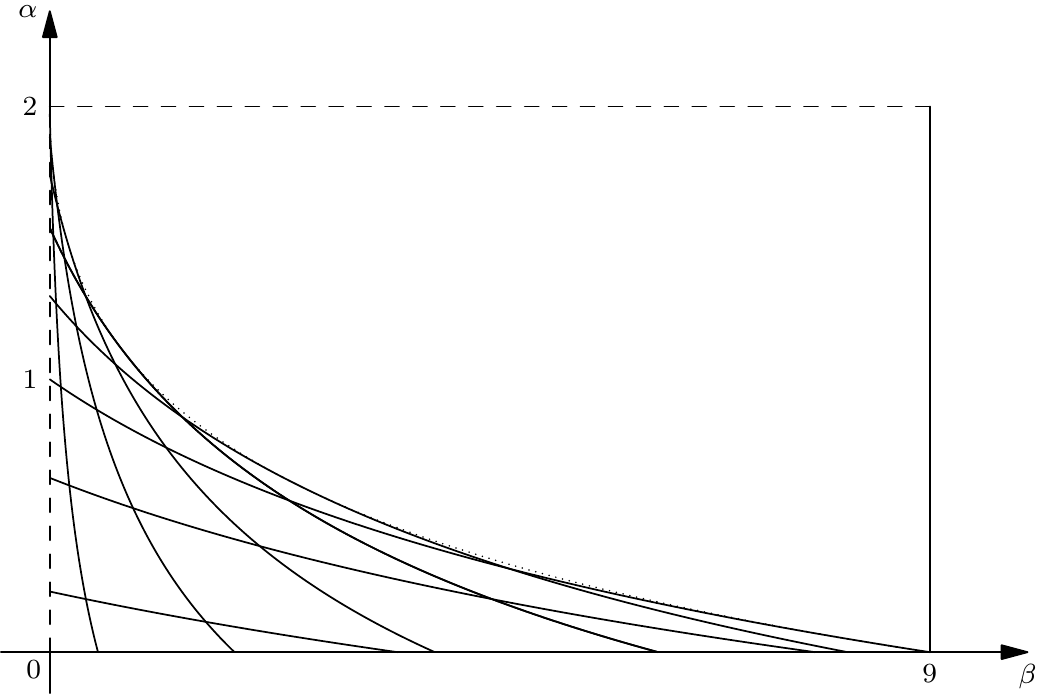}} \quad
	\subfloat[][$k=8$.]{\includegraphics[width=0.45\textwidth]{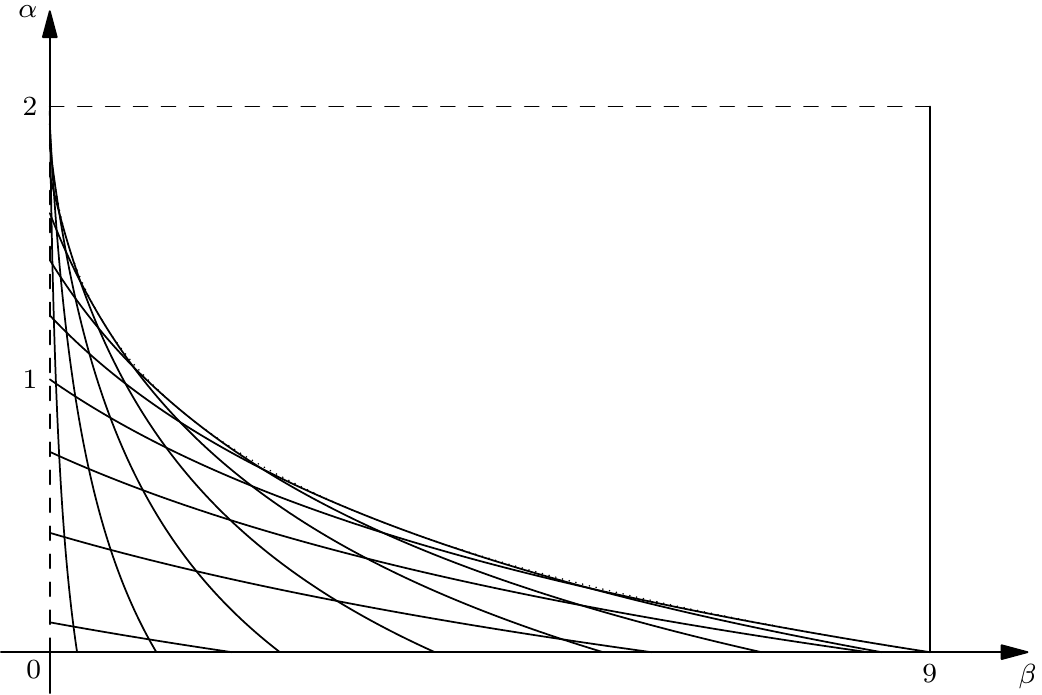}}
	\caption{Curves $\{f_{k,l}\}$ on which the Maslov index of the $k$-th iteration of the problem restricted to $E_3$ jumps (the values of $k$ taken into account are shown below each subfigure).
			The dotted line is the stability curve, which is approximated more and more accurately as $k$ increases.} \label{fig:iterates3}
	\end{figure}	
	
	As for the Morse index of the family of circular Lagrangian solutions of the planar $3$-body-type problem, an interesting result is due to Venturelli
	\cite[Theorem~3.1.7, page~25]{Venturelli:phd}, who proved that for $\alpha = 1$ the minimisers of the Lagrangian action functional among the loops under a homological constraint are circular
	orbits. Moreover, he showed that for equal masses ($\beta = 9$) and $\alpha\in (1,2)$ the periodic solution $\gamma_{\alpha,9}$ is a strict local minimiser, whereas for $\alpha \in [0,1)$ it is a
	saddle. 
	The problem of determining the Morse index of the circular Lagrangian orbit for different masses and for any parameter $\alpha \in (0,2)$ has been left unsolved until now.
	
	 \begin{teo}
		The Morse index of the  Lagrangian circular solution $\gamma_{\alpha, \beta}$ is given by		
		\[
			\iMor(\gamma_{\alpha, \beta}) = 
					\begin{cases}
						0 & \text{if $\alpha \in [1, 2)$} \\[10pt]
						2 & \textup{if $\beta \geq \dfrac{36(1 - \alpha)}{(\alpha + 2)^2}$ and $\alpha \in [0,1)$} \\[10pt]
						4 & \textup{if $0 < \beta < \dfrac{36(1 - \alpha)}{(\alpha + 2)^2}$}.
					\end{cases}
		\]
		The result is depicted in Figure~\ref{fig:iMor}.
	\end{teo}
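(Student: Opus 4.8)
The plan is to reduce everything, via the symplectic splitting $T^*X = E_1\oplus E_2\oplus E_3$ recalled above, to a single Maslov-type index on the ``essential'' subspace $E_3$. Once the Morse index is rephrased through the index theorem it becomes additive over symplectic invariant subspaces, so
\[
	\iMor(\gamma_{\alpha,\beta}) = \iMor(\gamma_{\alpha,\beta}|_{E_1}) + \iMor(\gamma_{\alpha,\beta}|_{E_2}) + \iMor(\gamma_{\alpha,\beta}|_{E_3}).
\]
The first summand vanishes: on $E_1$ the reduced dynamics is that of the free centre of mass, whose index form is positive semidefinite and therefore contributes no negative directions. The second summand is the Morse index of the circular solution of the generalised Kepler problem, which by the first Theorem equals $0$ for $\alpha\in[1,2)$ and $2$ for $\alpha\in[0,1)$. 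Hence the entire statement is controlled by $\iMor(\gamma_{\alpha,\beta}|_{E_3})$, the only summand that feels the masses.

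To handle $E_3$ I would first write its linearised Hamiltonian system in the (suitably modified) Meyer--Schmidt coordinates: after rescaling time so that the circular orbit has period $2\pi$ it takes the autonomous form $\dot z = J B_{\alpha,\beta}\,z$ on $\R^4$, with $B_{\alpha,\beta}$ a symmetric matrix depending polynomially on $\alpha$ and $\beta$; its fundamental solution is the path $\psi_{\alpha,\beta}(t) = \exp(t\,J B_{\alpha,\beta})$ in $\Sp(4,\R)$ with $\psi_{\alpha,\beta}(0)=I$. The characteristic polynomial of $J B_{\alpha,\beta}$ has the biquadratic form $\lambda^4 + p\,\lambda^2 + q$ with explicit $p=p(\alpha,\beta)$, $q=q(\alpha,\beta)$; the discriminant locus $p^2=4q$ recovers exactly the stability curve $\beta = 9\bigl(\tfrac{\alpha-2}{\alpha+2}\bigr)^2$, and throughout the region $\mathit{LS}$ the roots are $\pm i\nu_1,\pm i\nu_2$ with $0<\nu_1\le\nu_2$ real, so that $\psi_{\alpha,\beta}$ is conjugate in $\Sp(4,\R)$ to the product of planar rotations $R(\nu_1 t)\oplus R(\nu_2 t)$.

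Next I would invoke the index theorem, which identifies $\iMor(\gamma_{\alpha,\beta}|_{E_3})$ with an $\omega=1$ Maslov-type index of the path $\psi_{\alpha,\beta}$ (whose Legendre-convexity hypothesis is satisfied since the kinetic part of the Lagrangian $\L$ is positive definite). For a rotation path this index is read off from the crossing form: that of $t\mapsto R(\nu t)$ on $[0,2\pi]$ is a non-decreasing step function which vanishes for $\nu\in(0,1]$ and jumps by $2$ as $\nu$ passes each integer, the value at an integer being the pre-jump one. A short discussion of $p$ and $q$ then shows that $\nu_1\in(0,1)$ throughout $\mathit{LS}$, while $\nu_2$ is a strictly decreasing function of $\beta$ that equals $1$ exactly on the curve $\beta = \frac{36(1-\alpha)}{(\alpha+2)^2}$ --- verified by requiring $\lambda=i$ to be a root, i.e.\ $1-p+q=0$. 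Hence $\iMor(\gamma_{\alpha,\beta}|_{E_3})$ equals $0$ when $\beta\ge\frac{36(1-\alpha)}{(\alpha+2)^2}$ and $2$ when $0<\beta<\frac{36(1-\alpha)}{(\alpha+2)^2}$, a range nonempty only for $\alpha<1$ (for $\alpha\in[1,2)$ one has $\frac{36(1-\alpha)}{(\alpha+2)^2}\le 0$, so the $E_3$-contribution is $0$ for every admissible $\beta>0$). Adding the $E_2$-contribution from the first paragraph yields $0$, $2$ and $4$ in the three cases, as claimed.

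The main obstacle is threefold. First, obtaining the correct matrix $B_{\alpha,\beta}$: the Meyer--Schmidt reduction has to be carried out for the $\alpha$-homogeneous and logarithmic potentials rather than for the Newtonian one, and a normalisation or sign slip there propagates through the whole argument. Second, the bookkeeping at the non-generic parameters --- on the stability curve (where $\nu_1=\nu_2$ and the block becomes parabolic), on the jump curve (where $\nu_2=1$, the monodromy on $E_3$ has the eigenvalue $1$ and the nullity is nonzero, so one must check via the sign of the crossing form that this boundary point lies on the $\iMor=2$ side, as the ``$\ge$'' in the statement requires), and along $\alpha=1$ and $\beta=9$ --- which makes it necessary to use the precise index theorem with its nullity correction rather than a black-box version. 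Third, establishing the strict monotonicity of $\nu_2$ in $\beta$ cleanly enough to conclude that the jump occurs exactly on $\beta = \frac{36(1-\alpha)}{(\alpha+2)^2}$ and nowhere else.
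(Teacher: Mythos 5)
Your strategy is the paper's (split off $E_2$, invoke the index theorem, read the $E_3$ contribution off the elliptic frequencies of $\Lambda_3$, and locate the jump on $\beta=\tfrac{36(1-\alpha)}{(\alpha+2)^2}$ via the condition that $\pm i$ be eigenvalues), but the key per-block computation is wrong as stated. The index that appears in the Morse index theorem (Lemma~\ref{thm:indextheorem}, i.e.\ Long's $i_1=\iclm-n$) of the path $t\mapsto R(\nu t)$, $t\in[0,2\pi]$, is $1$ for $\nu\in(0,1]$ and $3$ for $\nu\in(1,2]$ --- never $0$: for the harmonic oscillator $\ddot x+\nu^2x=0$ the constant loops are always negative directions, so the periodic Morse index is $1+2\lfloor\nu\rfloor$, matching Example~\ref{ex:Ralpha} ($\iclm(R_\alpha)=2$, hence $i_1=1$, for frequency in $(0,1]$). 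Moreover $\phi_3$ is \emph{not} symplectically conjugate to $R(\nu_1 t)\diamond R(\nu_2 t)$: the two elliptic pairs of $\Lambda_3$ carry opposite Krein signatures (this is exactly why the paper singles out $\lambda_1^-$ and $\lambda_2^+$ as the Krein-positive eigenvalues, and why on $E_2$ the second factor is the shear $N_\alpha$ with $i_1(N_\alpha)=-1$). The correct symplectic normal form is a positively and a negatively oriented rotation, with block contributions $+1$ or $+3$ and $-1$, summing to $0$ above and $2$ below the curve. Your tally $0+0$ and $2+0$ reproduces these totals only because two errors cancel (you drop the $+1$ endpoint contribution of the positive block and the $-1$ of the negative block); applied to two blocks of equal Krein orientation your rule would return $0$ where the true index is $2$. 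So you must either track the Krein signature explicitly (as in Proposition~\ref{prop:MaslovAbbo} or via splitting numbers) or compute the crossing forms of the actual path, endpoint terms included, rather than use the orientation-blind rule you state.

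There is also a coverage gap: your frequency analysis lives entirely in $\mathit{LS}$, but the theorem concerns the whole rectangle $(0,9]\times[0,2)$, and both the case $\alpha\in[1,2)$ and the case $\beta\geq\tfrac{36(1-\alpha)}{(\alpha+2)^2}$, $\alpha\in[0,1)$, contain a large portion of the spectrally unstable region $\beta>9\bigl(\tfrac{\alpha-2}{\alpha+2}\bigr)^2$, where the spectrum of $\Lambda_3$ is a complex quadruple off the imaginary axis and no rotation form exists. You need a separate argument that $i_1(\phi_3)=0$ there --- the paper's Proposition~\ref{thm:Maslovinst} (only unit-circle eigenvalues contribute), or alternatively homotopy/continuity using that the monodromy has no eigenvalue $1$ on $\mathit{SI}\cup\mathit{SS}$ --- and the lower-semicontinuity argument on the degenerate curve itself, which you correctly flag but do not carry out. (Minor: the $E_1$ summand never enters, since the action is defined on loops in $\hat X$ with the centre of mass already fixed; including it with value $0$ is harmless.)
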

	
	In the particular case of $\alpha=1$ we recover the results proved by X.~Hu and S.~Sun in \cite[Formulas~(55)--(56)]{MR2563212}: they compute the Morse index of the Lagrangian elliptic
	orbits of the classical three-body problem taking as parameters the eccentricity of the orbit and $\beta$.
	
	As for the generalised Kepler problem, we are able to determine, via $\omega$-index theory, any $\iMor(\gamma^k_{\alpha, \beta})$ for all $k \geq 2$. It is worth 
	noting that $\alpha=2$ is the limit of some values $\alpha_k$ that are the points where the Morse index of the $k$-th 
	iteration of the circular Keplerian solution jumps. Moreover this limit value (which coincides with the lower bound of the 
	strong force condition) is the boundary of the spectral stability region. The very same behaviour appears also in the restriction to the symplectic invariant subspace $E_3$, although the 
	curves of the $(\beta,\alpha)$-plane over which the Morse index of all the $k$-iterations jumps are no longer straight lines.
	As we show at the end of Subsection~\ref{subsec:omegaE3}, the boundary of the stability region is the enveloping curve of a two-parameter family of curves representing the jumps in the
	Morse index of the iterations of the solution.
	It seems then quite plausible to conjecture that the points at which a transition of stability occurs could be locally approximated, in a suitable sense, by curves along which there is a change in
	the Morse index of all the iterates.
	
	Let us now compare our result with some other important contributions on the subject. Being every relative equilibrium a zero-average loop solution, our theorem and
	\cite[Theorem~3.1.7, page~25]{Venturelli:phd} seem to be in striking contrast with the main theorem by A.~Chenciner and N.~Desolneux in \cite{MR1642007}, where they proved that
	$\gamma_{\alpha, \beta}$, for $\alpha \in (0, +\infty)$, are global minima of the action functional defined on the space of $W^{1,2}$-loops with zero average (and fixed centre of mass).
	However, although for $\alpha \in [0,1)$ we show that the Morse index is strictly positive, there is no contradiction because we do not restrict ourselves to the zero-average $W^{1,2}$-loop
	space. One might observe that the domain of the functional analysed by Chenciner and Desolneux includes collisions and ours does not, but this is not at all influential on the question: even
	taking into account those singularities the Morse index would not be affected, being it a local function and being relative equilibria always collisionless by definition.
	The main result in \cite{MR1642007} has been recently generalised in \cite{MR2097664}, where V.~Barutello and S.~Terracini proved that for every $\alpha \in (0, +\infty)$ 
	the absolute minimum among simple choreographies is attained on a relative equilibrium motion associated with the 
	regular $n$-gon. We observe that in imposing the choreographic symmetry constraint the authors require as well that the masses 
	be equal, so that the symmetry may act transitively on the bodies' labels. This corresponds in our setting to fixing $\beta = 9$.
	Their result \cite[Theorem~1]{MR2097664} entails that the circular Lagrange solution is an absolute minimum of the action functional on the $W^{1,2}$-choreographies.
	However, by our theorem we have that for $\alpha \in [0,1)$ the Morse index is $2$. We observe as above that this is not in contrast with our result
	since we are computing the Morse index in a strictly larger space.
	
	The main tool we used to demonstrate these results is an index theory, namely a Morse index theorem that relates the Morse index of a critical point of the Lagrangian action functional and the
	Maslov index of the fundamental solution associated with the corresponding Hamiltonian system.
	The problem of computing the Morse index is then translated into the computation of the Maslov index. The key ingredient in 
	order to switch from the Morse index to the Maslov index is the use of the Morse index theorem. 
	In order to compute this symplectic invariant we avail ourselves of some canonical
	transformations that involve a symplectic change of coordinates. Such new coordinates provide two useful advantages: first, the linearised Hamiltonian
	system becomes autonomous; second, the reduced phase space is split into two symplectic $4$-dimensional subspaces $E_2$ and $E_3$ which are invariant under the phase flow. As a
	consequence, the Maslov index is obtained as the sum of the Maslov indices of the restrictions of the fundamental solution to these subspaces.
	Although some formulas for the computation of the Maslov index exist for non-degenerate situations (involving for instance the Krein signature), we point out that $E_2$ gives rise to
	a really degenerate setting. We overcome all of these problems by using different notions of Maslov index available in the literature, all of which differ by the 
	contribution at the endpoints and by their homotopy properties. In order to overcome the degeneracy on $E_2$ we used the axiomatic definition given by Cappell, Lee and Miller in their 
	well-known paper \cite{MR1263126}, while to manage the degeneracy represented by the boundary of the stability region on $E_3$ we mainly employ the Maslov index introduced by Long.
	In Section~\ref{sec:symplecticpreliminaries} we recall the puzzle of all these indices trying to point out their main properties as well as the intertwining relations between them.
	Due to the low dimension, in all of our computation a big role is played by the geometry of $\Sp(2)$. To this end and for the sake of the reader we dedicate Appendix~\ref{app:SP2} to fix our
	notation and to recall some well-known facts scattered in the literature.  
	As already observed, a key result is represented by the Morse index theorem stating the relation between the Morse index of the essentially positive Fredholm quadratic forms 
	associated with the second variation and the Maslov index of the periodic solution. Appendix~\ref{sec:Fredholmforms} is devoted to fixing and clarifying the functional-analytical setting.
		
	\tableofcontents

	\paragraph{Acknowledgements.} We would like to thank Prof.~Susanna Terracini for many helpful discussions, Prof.~Yiming Long and Prof.~Xijun Hu for numerous conversations and
	suggestions on this research project.

%	--------------------------------------------------------------------------------------------------------------------
	\section{Relative equilibria and a symplectic decomposition of the phase space} \label{sec:description}
%	--------------------------------------------------------------------------------------------------------------------
	
	Consider three bodies with positive masses $m_1$, $m_2$, $m_3$ moving in the Euclidean plane $\R^2$ and denote by $q \= \Bigl( \begin{smallmatrix} q_1\\ q_2\\ q_3 \end{smallmatrix}
	\Bigr) \in \R^6$ the column vector of all positions, where each $q_i$ is a column vector in $\R^2$.	 
	 
	We are interested in finding periodic solutions of the Newtonian system
			\begin{equation}\label{eq:Newton}
				M \ddot{q} = \nabla U(q),
			\end{equation}
	where $U : X \subset \R^6 \to \R$ is one of the two potential functions 
			\begin{subequations} \label{eq:Uhom}
				\begin{align}
					\Uhom(q) & \= \sum_{\substack{i, j = 1\\ i<j}}^3 \frac{m_i m_j}{\abs{q_i - q_j}^{\alpha}}, \qquad \alpha \in (0, 2), \\
					\Ulog(q) & \= - \sum_{\substack{i, j = 1\\ i<j}}^3 m_i m_j \log{\abs{q_i - q_j}}
				\end{align}
			\end{subequations}
	($\alpha = 1$ corresponds to the gravitational case) defined on the \emph{collision-free configuration space}
		\[
			X \= \R^6 \setminus \Set{ q \in \R^6 | q_i = q_j \text{ for some } i \neq j }.
		\]
	The symbol $\abs{\, \cdot\,}$ indicates the Euclidean norm in $\R^2$, whilst $M \in \Mat(6, \R)$ is the diagonal \emph{mass matrix} $\diag(m_1 I_2, m_2 I_2, m_3 I_2)$ and $I_k$ is the
	$k \times k$ identity matrix. 
	
	In order to rewrite the second-order system~\eqref{eq:Newton} as a first-order Hamiltonian system we define the \emph{Hamiltonian function} $\H : T^*{X} \to \R$ to be
		\begin{equation} \label{eq:hamiltonian}
			\H(p, q) \= \frac{1}{2}\langle M^{-1}\trasp{p}, \trasp{p} \rangle - U(q),
		\end{equation}
	where $p \= (p_1, p_2, p_3) \in \R^6$ is the row vector of the linear momenta conjugate to $q$.
	Hence System~\eqref{eq:Newton} becomes
		\begin{equation} \label{eq:Hamilton}
			\begin{cases}
				\trasp{\dot{p}} = -\partial_q \H = \nabla U(q) \\
				\dot{q} = \partial_p \H =M^{-1} \trasp{p}.	
			\end{cases}
		\end{equation}
	Let us remark that by summing up the equations of~\eqref{eq:Newton} we obtain that the centre of mass of the system moves uniformly along a straight line; therefore, without loss of generality,
	we can fix it at the origin and study the dynamics on the \emph{reduced (collision-free) configuration space}
		\[
 			\hat{X} \= \Set{ q \in X | \sum_{i = 1}^3 m_i q_i = 0}.
		\]
	the reduced phase space $T^*{\hat{X}}$ is therefore $8$-dimensional.

	%\red
	{
	%	----------------------------------------------
		\subsection{Relative equilibria and central configurations} \label{subsec:cc}
	%	----------------------------------------------
	%	
	Among all the non-colliding solutions of Newton's Equations~\eqref{eq:Newton}, maybe the simplest are represented by a special class of periodic solutions called \emph{relative equilibria}:
	they are special motions which are at rest in a uniformly rotating frame. In the following and throughout all this paper, the matrix
		\[
			J_{2n} \= \begin{pmatrix}
					0 & -I_n \\
					I_n & 0
				\end{pmatrix}
		\]
	will denote the complex structure in $\R^{2n}$, but it will always be written simply as $J$, its dimension being clear from the context. The symplectic form on $\R^{2n}$ is then represented
	through the scalar product $\langle J \cdot, \cdot \rangle$.
	Let $e^{\omega J t} = \begin{pmatrix} \cos\omega t & -\sin\omega t \\ \sin\omega t & \cos\omega t \end{pmatrix}$ be the matrix representing the rotation in the plane with angular velocity
	$\omega$. With the symplectic change of coordinates
		\begin{equation*}
			\begin{cases}
				\trasp{y} \= e^{\omega K t}\, \trasp{p} \\
				x \= e^{\omega K t}\, q,
			\end{cases}
		\end{equation*}
	where $K$ is the $6 \times 6$ block-diagonal matrix $\diag(J, J, J)$, we rewrite Hamilton's Equations~\eqref{eq:Hamilton} in a frame uniformly rotating about the origin with a period
	$2\pi/\omega$:
		\begin{equation}\label{eq:newHamilton}
			\begin{cases}
				\trasp{\dot{y}} = -\partial_x \hat{\H} = \omega K \trasp{y} + \nabla U(x) \\
				\dot{x} = \partial_y \hat{\H} = M^{-1} \trasp{y} + \omega Kx,
			\end{cases}
		\end{equation}
	where $\hat{\H}$ is the new Hamiltonian function given by
		\begin{equation}\label{eq:newHamiltonian}
			 \hat{\H}(y, x) \= \frac{1}{2} \langle M^{-1} \trasp{y}, \trasp{y} \rangle - U(x) - \omega \langle K \trasp{y}, x \rangle.
		\end{equation}
	From the physical point of view, the terms involving $K$ come from the Coriolis force. A \textit{relative equilibrium} $\trasp{(\bar{y}, \trasp{\bar{x}})}$ is then an equilibrium point for System~\eqref{eq:newHamilton} and
	must satisfy the conditions
		\begin{equation}\label{eq:equilibrium2}
			\begin{cases}
				M^{-1} \nabla U(\bar{x}) + \omega^2 \bar{x} = 0 \\
				\trasp{\bar{y}} = -\omega M K \bar{x}.
			\end{cases}
		\end{equation}
	Note that the first equation just involves the configuration $\bar{x}$ and it is the well known \textit{central configuration equation} 
	(for further details see \cite{MR3227283}).
	Using Euler's Theorem for homogeneous functions one can compute 
	\begin{equation} \label{eq:lambdahom}
				\omega^2 = \begin{cases}
							\lambda_\alpha \= \dfrac{\alpha \Uhom(\bar{x})}{\mathcal{I}(\bar{x})} & \text{if } U = \Uhom \\[10pt]
							\lambda_{\log} \= \displaystyle \frac{1}{\mathcal{I}(\bar{x})} \sum_{\substack{i, j = 1 \\ i < j}}^n m_i m_j & \text{if } U = \Ulog,
						\end{cases}
	\end{equation}
	where
	    	\[
	    		\mathcal{I}(\bar{x}) \= \langle M\bar{x}, \bar{x} \rangle = \sum_{i = 1}^3 m_i \abs{\bar{x}_i}^2,
	    	\]
	is the (double of) the \emph{moment of inertia} (and a norm in $\R^6$).				
	Hence if we let three bodies, distributed in a planar central configuration, rotate with an angular velocity $\omega$ equal to $\sqrt{\lambda_\alpha}$ or to $\sqrt{\lambda_{\log}}$ we get a
	relative equilibrium, which becomes an equilibrium in a uniformly rotating coordinate system.
	\begin{rmk}
		We observe that $\bar{x}$ is a central configuration if and only if it is a constrained critical point of $\Uhom$ on a level surface of $\mathcal{I}$; furthermore if $\bar{x}$ is a central configuration then $c\bar{x}$
		and $O \bar{x}$ are, for any $c \in \R \setminus \{ 0 \}$ and any $6 \times 6$ block-diagonal matrix $O$ with entries given by a $2 \times 2 $ fixed matrix in $\SO(2)$. Because of these facts, it is standard practice
		to take the quotient of the configuration space $\hat{X}$ with respect to homotheties and rotations about the origin, which gives the so-called \emph{shape sphere} $\mathbb{S}$.
		It is well known by the studies of Lagrange and Euler (\cite{MR3227283}) that on $\mathbb{S}$ (for any choice of the masses) there are exactly five central configurations: three of them are collinear (the three
		bodies lie on the same line), while in the other two the bodies are arranged at the vertices of a regular triangle.  
	\end{rmk}
	}

	%	--------------------------------------------------------------------------------
		\subsection{A symplectic decomposition of the phase space for the linearised system} 
		\label{subsec:decomp}
	%	--------------------------------------------------------------------------------
	
	Consider the Hamiltonian System~\eqref{eq:Hamilton} in $\R^{12}$
		\begin{equation} \label{hamzeta}
			\dot{\zeta}(t) = J\nabla \H \bigl( \zeta(t) \bigr),
		\end{equation}
	where $\zeta \= \trasp{(p, \trasp{q})}$ and $\H$ is the Hamiltonian of the $3$-body problem defined in \eqref{eq:hamiltonian}. We linearise it around a relative equilibrium $\bar{\zeta}$ and write
		\begin{equation} \label{eq:hamzetalinearised}
			\dot{\zeta}(t) = J D^2 \H(\bar{\zeta})\, \zeta(t).
		\end{equation}
	
	The presence of the first integrals of motion and the invariance of the problem under some isometries gives rise to three symplectic invariant subspaces of the phase space: $E_1$,
	carrying the information about the translational invariance, $E_2$, generated by the conservation of the angular momentum and by the invariance by dilations, and $E_3$, defined as the
	symplectic orthogonal complement of the first two.

	%\red
	{
	Indeed, a basis for the position and momentum of the centre of mass is given by the four vectors in $\R^{12}$
		\[
			G_1 \= \begin{pmatrix}
					Mv \\
					0
				\end{pmatrix}, \qquad
			G_2 \= \begin{pmatrix}
					KMv \\
					0
				\end{pmatrix}, \qquad
			g_1 \= \begin{pmatrix}
					0 \\
				 	v
				\end{pmatrix}, \qquad
			g_2 \= \begin{pmatrix}
					0 \\
					Kv
				\end{pmatrix}
		\]
	with $v \= \trasp{(1, 0, 1, 0, 1, 0)} \in \R^6$. If we let $E_1$ be the space spanned by these vectors, it turns out that it is invariant and also symplectic.
	Note that the symplectic complement of $E_1$ is the space where the barycentre of the system is fixed at the origin and the total linear momentum is zero.
	The scaling and rotational symmetries generate another linear symplectic invariant subspace $E_2$, a basis of which is given by the four vectors in $\R^{12}$
		\[
			Z_1 \= \begin{pmatrix}
					M\bar{q} \\
					0
				\end{pmatrix}, \qquad
			Z_2 \= \begin{pmatrix}
					KM\bar{q} \\
					0
				\end{pmatrix}, \qquad
			z_1 \= \begin{pmatrix}
					0 \\
					\bar{q}
				\end{pmatrix}, \qquad
			z_2 \= \begin{pmatrix}
					0 \\
					K\bar{q}
				\end{pmatrix}.
		\]
	The coordinates on third subspace $E_3$ will be denoted by $\trasp{(W, \trasp{w})}$; note that this also is $4$\nobreakdash-dimensional.
	}
		      	
	We now derive a useful expression of the matrix of the linearised system by adapting the proof of Meyer and Schmidt in \cite[Lemma~3.1, pages~271--273]{MR2145251} to the case of the
	$\alpha$-homogeneous potential, but restricting ourselves to the circular case, i.e.~with zero eccentricity. In order to simplify the computations we set, without loss of generality,
		\[
			m_1 + m_2 + m_3 = 1;
		\]
	furthermore we introduce the key parameter
		\[
			\beta \= \frac{27(m_1 m_2 + m_1 m_3 + m_2 m_3)}{{(m_1 + m_2 + m_3)}^2} = 27(m_1 m_2 + m_1 m_3 + m_2 m_3) \in (0, 9].
		\]
		
	\begin{prop}[$\alpha$-homogeneous case] \label{prop:Lambdahom}
		There exists a system of symplectic coordinates $\xi \= \trasp{(\bar{Z}, \bar{W}, \trasp{\bar{z}}, \trasp{\bar{w}})} \in \R^8$ and a rescaled time $\tau$ such that the linearised
		System~\eqref{eq:hamzetalinearised} restricted to $E_2 \oplus E_3 = T^*\hat{X}$ has the form
			\begin{equation} \label{eq:sysR8}
				\frac{d\xi}{d\tau} = \Lambda \xi,
			\end{equation}
		where
			\begin{equation} \label{eq:Lambda}
				\Lambda \= \begin{pmatrix}
					0 & 1 & 0 & 0 & \alpha + 1 & 0 & 0 & 0 \\
					-1 & 0 & 0 & 0 & 0 & -1 & 0 & 0 \\
					0 & 0 & 0 & 1 & 0 & 0 & \dfrac{1}{2} \Bigl( \alpha + \dfrac{\alpha + 2}{3}\sqrt{\smash[b]{9 - \beta}} \Bigr) & 0 \\
					0 & 0 & -1 & 0 & 0 & 0 & 0 & \dfrac{1}{2} \Bigl( \alpha - \dfrac{\alpha + 2}{3}\sqrt{\smash[b]{9 - \beta}} \Bigr) \\
					1 & 0 & 0 & 0 & 0 & 1 & 0 & 0 \\
					0 & 1 & 0 & 0 & -1 & 0 & 0 & 0 \\
					0 & 0 & 1 & 0 & 0 & 0 & 0 & 1 \\
					0 & 0 & 0 & 1 & 0 & 0 & -1 & 0
				\end{pmatrix}.
			\end{equation}
	\end{prop}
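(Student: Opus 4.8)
The plan is to adapt, step by step, the proof of Meyer and Schmidt's Lemma~3.1 in~\cite{MR2145251}, carrying the homogeneity parameter $\alpha$ through every computation and specialising to the circular (zero-eccentricity) Lagrange relative equilibrium, so that no time-dependent symplectic reduction is needed and everything stays autonomous.

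First I would pass to the uniformly rotating frame of Subsection~\ref{subsec:cc}, in which the Lagrange relative equilibrium $\bar\zeta = \trasp{(\bar y, \trasp{\bar x})}$ is a genuine equilibrium point of the \emph{autonomous} system~\eqref{eq:newHamilton}, with angular velocity fixed by $\omega^2 = \lambda_\alpha$ as in~\eqref{eq:lambdahom}. Linearising~\eqref{eq:newHamilton} about $\bar\zeta$ produces the autonomous linear system
\begin{equation*}
	\begin{cases}
		\trasp{\dot u} = \omega K \trasp u + D^2 U(\bar x)\, v \\
		\dot v = M^{-1} \trasp u + \omega K v,
	\end{cases}
\end{equation*}
where $(u,v)$ is the perturbation. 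Since $E_1$ is symplectic and flow-invariant, I restrict to its symplectic complement $E_2 \oplus E_3 = T^*\hat X$, an $8$-dimensional subspace, and verify (using the central configuration equation and the explicit generators $Z_1,Z_2,z_1,z_2$) that $E_2$ and $E_3$ are themselves invariant, $E_2$ carrying the dilation/rotation gauge directions (hence a nilpotent part) and $E_3$ carrying the genuine shape dynamics.

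The heart of the argument is the computation of the Hessian $D^2 U(\bar x)$ at the equilateral triangle central configuration. Using the central configuration equation $M^{-1}\nabla U(\bar x) = -\omega^2 \bar x$ together with Euler's identity for the $\alpha$-homogeneous potential, and exploiting the fact that at the equilateral configuration all three mutual distances coincide, one can write the reduced operator $M^{-1} D^2 U(\bar x)$ in block form adapted to the $E_2 \oplus E_3$ splitting. On $E_2$ this reproduces the linearisation of the generalised Kepler problem: the corresponding $4\times 4$ block is the top-left block of $\Lambda$, with the entry $\alpha+1$, whose characteristic exponents $0,0,\pm i\sqrt{2-\alpha}$ match the eigenvalues $1,1,e^{\pm 2\pi i\sqrt{2-\alpha}}$ of the monodromy matrix on $E_2$ quoted in the Introduction. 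On $E_3$ the reduced Hessian is a $2\times 2$ symmetric matrix whose eigenvalues are $\tfrac12\bigl(\alpha \pm \tfrac{\alpha+2}{3}\sqrt{9-\beta}\bigr)$, the quantity $9-\beta$ arising precisely as a multiple of the discriminant of that block, with the mass dependence entering only through $\beta = 27(m_1m_2 + m_1m_3 + m_2m_3)$.

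Finally I would exhibit an explicit symplectic basis realising these two $4$-dimensional blocks — obtained by normalising the position-type generators $z_1,z_2,w_1,w_2$ and their conjugate momenta by suitable powers of $\omega$, of the common mutual distance, and of $\mathcal{I}(\bar x)$ — and check directly that the symplectic form $\langle J\,\cdot,\cdot\rangle$ takes the standard normal form on it (this is where the scalings are pinned down). Rescaling time by $\tau = \omega t$ clears the remaining constants and yields $d\xi/d\tau = \Lambda\xi$ with $\Lambda$ as displayed. The main obstacle is the Hessian computation at the Lagrange configuration: carrying it out cleanly enough to see the exact coefficients $\alpha+1$ and $\tfrac12\bigl(\alpha \pm \tfrac{\alpha+2}{3}\sqrt{9-\beta}\bigr)$ emerge requires a judicious choice of Jacobi-type coordinates on $\hat X$ so that the coupling between the $W$- and $w$-directions diagonalises exactly as stated; everything after that is bookkeeping of symplectic normalisations and the time rescaling.
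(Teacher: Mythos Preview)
Your proposal is correct and follows essentially the same route as the paper: adapt Meyer--Schmidt, pass to the rotating frame, compute the Hessian of $U_\alpha$ at the equilateral configuration (this is indeed the crux, yielding $\alpha+1$ on $E_2$ and the eigenvalues $\tfrac12\bigl(\alpha \pm \tfrac{\alpha+2}{3}\sqrt{9-\beta}\bigr)$ on $E_3$), and then normalise symplectically and rescale time. The paper differs only in presentation: it writes down the explicit Meyer--Schmidt matrix $C$ rather than describing the symplectic basis abstractly, performs the symplectic scaling and time rescaling with $\lambda_\alpha$ rather than $\omega$ (which is absorbed by your ``suitable powers of $\omega$''), and applies a final planar rotation on $E_3$ to pass from the mass-dependent block $\bigl(\begin{smallmatrix} a & b \\ b & c \end{smallmatrix}\bigr)$ to the diagonal form depending only on $\beta$ --- a step you allude to with your ``judicious choice of Jacobi-type coordinates''.
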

	
	\begin{proof}
		The Hamiltonian of the system in the fixed reference frame is
			\[
				\H(p, q) \= \frac{1}{2} \langle M^{-1}\trasp{p}, \trasp{p} \rangle - \Uhom(q),
			\]
		We make the following symplectic change of coordinates:
			\begin{equation} \label{eq:changepqtogzw}
				\trasp{p} = \traspinv{C}\begin{pmatrix} \trasp{G}\\ \trasp{Z}\\ \trasp{W} \end{pmatrix}, \qquad q = C \begin{pmatrix} g\\ z\\ w \end{pmatrix},
			\end{equation}
		where $C$ is given by (cf.~\cite[pages~268--269]{MR2145251})
			\[
				C \= \begin{pmatrix}
						1 & 0 & \frac{9(m_2 + m_3)}{2 \sqrt{\smash[b]{\beta}}} & \frac{3\sqrt{3}(m_2 - m_3)}{2 \sqrt{\smash[b]{\beta}}} & 0 &
																							- \frac{3\sqrt{3} \sqrt{m_2 m_3}}{\sqrt{\smash[b]{\beta}}\sqrt{m_1}} \\[10pt]
						0 & 1 & - \frac{3\sqrt{3}(m_2 - m_3)}{2 \sqrt{\smash[b]{\beta}}} & \frac{9(m_2 + m_3)}{2 \sqrt{\smash[b]{\beta}}} &
																						\frac{3\sqrt{3} \sqrt{m_2 m_3}}{\sqrt{\smash[b]{\beta}}\sqrt{m_1}} & 0 \\[10pt]
						1 & 0 & - \frac{9 m_1}{2 \sqrt{\smash[b]{\beta}}} & - \frac{3\sqrt{3}(m_1 + 2m_3)}{2 \sqrt{\smash[b]{\beta}}} &
											\frac{9 \sqrt{m_1 m_3}}{2 \sqrt{\smash[b]{\beta}} \sqrt{m_2}} & \frac{3\sqrt{3}\sqrt{m_1 m_3}}{2 \sqrt{\smash[b]{\beta}} \sqrt{m_2}} \\[10pt]
						0 & 1 & \frac{3\sqrt{3}(m_1 + 2m_3)}{2 \sqrt{\smash[b]{\beta}}} & - \frac{9 m_1}{2 \sqrt{\smash[b]{\beta}}} &
											- \frac{3\sqrt{3}\sqrt{m_1 m_3}}{2 \sqrt{\smash[b]{\beta}} \sqrt{m_2}} & \frac{9 \sqrt{m_1 m_3}}{2 \sqrt{\smash[b]{\beta}} \sqrt{m_2}} \\[10pt]
						1 & 0 & - \frac{9 m_1}{2 \sqrt{\smash[b]{\beta}}} & \frac{3\sqrt{3}(m_1 + 2m_2)}{2 \sqrt{\smash[b]{\beta}}} &
											- \frac{9 \sqrt{m_1 m_2}}{2 \sqrt{\smash[b]{\beta}} \sqrt{m_3}} & \frac{3\sqrt{3} \sqrt{m_1 m_2}}{2 \sqrt{\smash[b]{\beta}} \sqrt{m_3}} \\[10pt]
						0 & 1 & - \frac{3\sqrt{3}(m_1 + 2m_2)}{2 \sqrt{\smash[b]{\beta}}} & - \frac{9 m_1}{2 \sqrt{\smash[b]{\beta}}} &
												- \frac{3\sqrt{3} \sqrt{m_1 m_2}}{2 \sqrt{\smash[b]{\beta}} \sqrt{m_3}} & - \frac{9 \sqrt{m_1 m_2}}{2 \sqrt{\smash[b]{\beta}} \sqrt{m_3}}
					\end{pmatrix}.
			\]
		It is a straightforward computation to verify that $C$ is invertible and it satisfies the relations
			\[
				\trasp{C}MC = I, \qquad C^{-1}JC = J.
			\]
		After fixing the centre of mass at the origin (\ie setting $g = \trasp{G} = 0$, thus restricting the system to $E_2 \oplus E_3$), the Hamiltonian of the system becomes
			\[
				\H \bigl( Z, W, z, w \bigr) = \frac{1}{2} \bigl( Z_1^2 + Z_2^2 + W_1^2 + W_2^2 \bigr) - \Uhom(z, w).
			\]
		Consider now the rotation in the plane
			\[
				R(t) \= \begin{pmatrix}
						\cos(\lambda_\alpha t) & -\sin(\lambda_\alpha t) \\
						\sin(\lambda_\alpha t) & \cos(\lambda_\alpha t)
					\end{pmatrix},
			\]
		where $\lambda_\alpha$ is the Lagrange multiplier \eqref{eq:lambdahom} of the central configuration, corresponding to the square of the angular velocity of each body. Accordingly, we
		move to a uniformly rotating reference frame in the following way:
			\begin{equation} \label{eq:rotation}
				\begin{cases}
					\trasp{Z} = R(t) \trasp{\tilde{Z}} \\
					\trasp{W} = R(t) \trasp{\tilde{W}} \\
					z = R(t) \tilde{z} \\
					w = R(t) \tilde{w}.
				\end{cases}
			\end{equation}
		Since we are moving to a new set of canonical coordinates (see for instance \cite[Chapter~9]{Goldstein}) via the time-depending generating function
			\[
				F\bigl( Z, W, \tilde{z}, \tilde{w}, t \bigr) := -ZR(t)\tilde{z} - WR(t)\tilde{w},
			\]
		the new Hamiltonian function (still denoted by $\H$) must contain the extra term $\displaystyle \frac{d F}{d t}$:
			\[
				\begin{split}
					\H \bigl( \tilde{Z}, \tilde{W}, \tilde{z}, \tilde{w} \bigr) & = \frac{1}{2} \bigl( \tilde{Z}_1^2 + \tilde{Z}_2^2 + \tilde{W}_1^2 + \tilde{W}_2^2 \bigr) - \Uhom(\tilde{z}, \tilde{w}) \\
												& \quad\, + \lambda_\alpha \bigl( \tilde{Z}_1\tilde{z}_2 - \tilde{Z}_2\tilde{z}_1 + \tilde{W}_1\tilde{w}_2 - \tilde{W}_2\tilde{w}_1 \bigr).
				\end{split}
			\]
		Then we operate the following symplectic scaling with multiplier $\lambda_\alpha^{-\frac{\alpha}{\alpha + 2}}$:
			\begin{equation} \label{eq:scaling}
				\begin{cases}
					\tilde{Z} = \lambda_\alpha^{\frac{\alpha + 1}{\alpha + 2}} \hat{Z} \\
					\tilde{W} = \lambda_\alpha^{\frac{\alpha + 1}{\alpha + 2}} \hat{W} \\
					\tilde{z} = \lambda_\alpha^{-\frac{1}{\alpha + 2}} \hat{z} \\
					\tilde{w} = \lambda_\alpha^{-\frac{1}{\alpha + 2}} \hat{w}
				\end{cases}
			\end{equation}
		obtaining thus
			\[
				\begin{split}
					\H \bigl( \hat{Z}, \hat{W}, \hat{z}, \hat{w} \bigr) & = \frac{\lambda_\alpha}{2} \bigl( \hat{Z}_1^2 + \hat{Z}_2^2 + \hat{W}_1^2 + \hat{W}_2^2 \bigr)
																										- \Uhom(\hat{z}, \hat{w}) \\
													& \quad\, + \lambda_\alpha \bigl( \hat{Z}_1\hat{z}_2 - \hat{Z}_2\hat{z}_1 + \hat{W}_1\hat{w}_2 - \hat{W}_2\hat{w}_1 \bigr).
				\end{split}
			\]
		The next step consists in a time scaling: define $\tau \= \lambda_\alpha t$ and rewrite System~\eqref{hamzeta} as
			\[
				\frac{d \zeta \bigl( \tau(t) \bigr)}{d \tau} \frac{d\tau(t)}{dt} = J\nabla \H \bigl( \zeta(\tau(t)) \bigr),
			\]
		or equivalently as
			\begin{equation} \label{eq:hamsystau}
				\zeta'(\tau) \lambda_\alpha = J\nabla \H \bigl( \zeta(\tau) \bigr),
			\end{equation}
		where the prime $'$ denotes the derivative with respect to $\tau$. Hence a division of both sides of \eqref{eq:hamsystau} by $\lambda_\alpha$ yields the equivalent system
			\[
				\zeta'(\tau) = J\nabla \hat{\H} \bigl( \zeta(\tau) \bigr),
			\]
		where
			\[
				\begin{split}
					\hat{\H} \bigl( \hat{Z}, \hat{W}, \hat{z}, \hat{w} \bigr) & = \frac{1}{2} \bigl( \hat{Z}_1^2 + \hat{Z}_2^2 + \hat{W}_1^2 + \hat{W}_2^2 \bigr)
																										- \frac{1}{\lambda_\alpha}\Uhom(\hat{z}, \hat{w}) \\
													& \quad\, + \hat{Z}_1\hat{z}_2 - \hat{Z}_2\hat{z}_1 + \hat{W}_1\hat{w}_2 - \hat{W}_2\hat{w}_1 \\
													& = \frac{1}{\lambda_\alpha} \H \bigl( \hat{Z}, \hat{W}, \hat{z}, \hat{w} \bigr).
				\end{split}
			\]
		Finally, in order to shift the equilibrium point into the origin, we operate a translation and set
			\begin{equation} \label{eq:translation}
				\begin{cases}
					\bar{Z}_1 \= \hat{Z}_1 \\
					\bar{Z}_2 \= \hat{Z}_2 - 1 \\
					\bar{W}_1 \= \hat{W}_1 \\
					\bar{W}_2 \= \hat{W}_2 \\
					\bar{z}_1 \= \hat{z}_1 - 1 \\
					\bar{z}_2 \= \hat{z}_2 \\
					\bar{w}_1 \= \hat{w}_1 \\
					\bar{w}_2 \= \hat{w}_2
				\end{cases},
			\end{equation}
		whence
			\[
				\begin{split}
					\hat{\H} \bigl( \bar{Z}, \bar{W}, \bar{z}, \bar{w} \bigr) & = \frac{1}{2} \bigl[ \bar{Z}_1^2 + (\bar{Z}_2 + 1)^2 + \bar{W}_1^2 + \bar{W}_2^2 \bigr] -
																\frac{1}{\lambda_\alpha}\Uhom(\bar{z}, \bar{w}) \\
														& \quad\, + \bar{Z}_1\bar{z}_2 - (\bar{Z}_2 + 1)(\bar{z}_1 + 1) + \bar{W}_1\bar{w}_2 - \bar{W}_2\bar{w}_1.
				\end{split}
			\]
		The matrix of the linearised system is ($J$ times) the Hessian of this Hamiltonian, evaluated at the origin. In order to write it down we need the Hessian of the potential $\Uhom$
		expressed in the coordinates $(\bar{z}, \bar{w})$, but since the computations are quite long and tedious we shall omit them and indicate only the way in which we obtained the result.
		We have that
			\[
				\Uhom(\bar{z}, \bar{w}) = \frac{m_1 m_2}{d_{12}^\alpha} + \frac{m_1 m_3}{d_{13}^\alpha} + \frac{m_2 m_3}{d_{23}^\alpha},
			\]
		where
			\begin{align*}
				d_{12} & \= \frac{3\sqrt{3}}{\sqrt{\smash[b]{\beta}}} \left[ (\bar{z}_1 + 1)^2 + \bar{z}_2^2 + \frac{m_3(m_1^2 + m_1 m_2 + m_2^2)}{m_1 m_2}(\bar{w}_1^2 + \bar{w}_2^2)\right. \\
									 & \qquad + \left. \sqrt{\frac{3m_2 m_3}{m_1}} \bigl( \bar{z}_2 \bar{w}_1 - (\bar{z}_1 + 1)\bar{w}_2 \bigr)
									 		- (2m_1 + m_2)\sqrt{\frac{m_3}{m_1 m_2}} \bigl( (\bar{z}_1 + 1) \bar{w}_1 + \bar{z}_2 \bar{w}_2 \bigr) \right]^{\frac{1}{2}}, \\
				d_{13} & \= \frac{3\sqrt{3}}{\sqrt{\smash[b]{\beta}}} \left[ (\bar{z}_1 + 1)^2 + \bar{z}_2^2 + \frac{m_2(m_1^2 + m_1 m_3 + m_3^2)}{m_1 m_3}(\bar{w}_1^2 + \bar{w}_2^2)\right. \\
									 & \qquad + \left. \sqrt{\frac{3m_2 m_3}{m_1}} \bigl( \bar{z}_2 \bar{w}_1 - (\bar{z}_1 + 1) \bar{w}_2 \bigr)
									 		+ (2m_1 + m_3)\sqrt{\frac{m_2}{m_1 m_3}} \bigl( (\bar{z}_1 + 1) \bar{w}_1 + \bar{z}_2 \bar{w}_2 \bigr) \right]^{\frac{1}{2}}, \\
				d_{23} & \= \frac{3\sqrt{3}}{\sqrt{\smash[b]{\beta}}} \left[ (\bar{z}_1 + 1)^2 + \bar{z}_2^2 + \frac{m_1(m_2^2 + m_2 m_3 + m_3^2)}{m_2 m_3}(\bar{w}_1^2 + \bar{w}_2^2)\right. \\
									 & \qquad - \left. (m_2 + m_3)\sqrt{\frac{3m_1}{m_2 m_3}} \bigl( \bar{z}_2 \bar{w}_1 - (\bar{z}_1 + 1) \bar{w}_2 \bigr)
									 		+ (m_2 - m_3)\sqrt{\frac{m_1}{m_2 m_3}} \bigl( (\bar{z}_1 + 1) \bar{w}_1 + \bar{z}_2 \bar{w}_2 \bigr) \right]^{\frac{1}{2}}. \\
			\end{align*}
		Now, calculating the Hessian of $\frac{1}{\lambda_\alpha}\Uhom$ and evaluating it at the origin yields
			\[
				\frac{1}{\lambda_\alpha}D^2\Uhom(0, 0) = \begin{pmatrix}
								\alpha + 1 & 0 & 0 & 0 \\
								0 & -1 & 0 & 0 \\
								0 & 0 & a & b \\
								0 & 0 & b & c
							\end{pmatrix},
			\]
		with $a \= \frac{1}{4}\bigl[ 4(\alpha + 1)m_1 + (\alpha - 2)(m_2 + m_3) \bigr]$, $b \= \frac{1}{4}\bigl[ \sqrt{3}(\alpha + 2)(m_2 - m_3) \bigr]$ and
		$c \= \frac{1}{4}\bigl[ -4m_1 + (3\alpha + 2)(m_2 + m_3) \bigr]$. The matrix of the linearised system is thus
			\begin{equation} \label{eq:Lambdabrutta}
				\begin{pmatrix}
					0 & 1 & 0 & 0 & \alpha + 1 & 0 & 0 & 0 \\
					-1 & 0 & 0 & 0 & 0 & -1 & 0 & 0 \\
					0 & 0 & 0 & 1 & 0 & 0 & a & b \\
					0 & 0 & -1 & 0 & 0 & 0 & b & c \\
					1 & 0 & 0 & 0 & 0 & 1 & 0 & 0 \\
					0 & 1 & 0 & 0 & -1 & 0 & 0 & 0 \\
					0 & 0 & 1 & 0 & 0 & 0 & 0 & 1 \\
					0 & 0 & 0 & 1 & 0 & 0 & -1 & 0
				\end{pmatrix}.
			\end{equation}
		Extracting from it the submatrix representing the dynamics on $E_3$ (\ie the one acting on the $W$'s and $w$'s only):
			\[
				\begin{pmatrix}
					0 & 1 & \frac{1}{4}\bigl[ 4(\alpha + 1)m_1 + (\alpha - 2)(m_2 + m_3) \bigr] & \frac{1}{4}\bigl[ \sqrt{3}(\alpha + 2)(m_2 - m_3) \bigr] \\
					-1 & 0 & \frac{1}{4}\bigl[ \sqrt{3}(\alpha + 2)(m_2 - m_3) \bigr] & \frac{1}{4}\bigl[ -4m_1 + (3\alpha + 2)(m_2 + m_3) \bigr] \\
					1 & 0 & 0 & 1 \\
					0 & 1 & -1 & 0
				\end{pmatrix},
			\]
		we apply a rotation to both positions $w$ and momenta $W$ and obtain
			\begin{equation} \label{eq:partesuE3}
				\begin{pmatrix}
					0 & 1 & \dfrac{1}{2} \Bigl( \alpha + \dfrac{\alpha + 2}{3}\sqrt{\smash[b]{9 - \beta}} \Bigr) & 0 \\
					-1 & 0 & 0 & \dfrac{1}{2} \Bigl( \alpha - \dfrac{\alpha + 2}{3}\sqrt{\smash[b]{9 - \beta}} \Bigr) \\
					1 & 0 & 0 & 1 \\
					0 & 1 & -1 & 0
				\end{pmatrix},
			\end{equation}
		so that the final matrix depends only on $\alpha$ and $\beta$. Now substitute \eqref{eq:partesuE3} back into \eqref{eq:Lambdabrutta} to get \eqref{eq:Lambda}.
	\end{proof}
	
	In the logarithmic case there is a completely similar result.
	
	\begin{prop}[Logarithmic case]
		There exist a system $\xi \= \trasp{(\bar{Z}, \bar{W}, \trasp{\bar{z}}, \trasp{\bar{w}})} \in \R^8$ of symplectic coordinates and a rescaled variable $\tau$ such that the linearised
		System~\eqref{eq:hamzetalinearised} restricted to $E_2 \oplus E_3 = T^*\widehat{X}$ has the form
			\[
				\frac{d\xi}{d \tau} = \Lambda \xi,
			\]
		where
			\begin{equation} \label{eq:Lambdalog}
				\Lambda \= \begin{pmatrix}
					0 & 1 & 0 & 0 & 1 & 0 & 0 & 0 \\
					-1 & 0 & 0 & 0 & 0 & -1 & 0 & 0 \\
					0 & 0 & 0 & 1 & 0 & 0 & \frac{1}{3} \sqrt{\smash[b]{9 - \beta}} & 0 \\
					0 & 0 & -1 & 0 & 0 & 0 & 0 & - \frac{1}{3} \sqrt{\smash[b]{9 - \beta}} \\
					1 & 0 & 0 & 0 & 0 & 1 & 0 & 0 \\
					0 & 1 & 0 & 0 & -1 & 0 & 0 & 0 \\
					0 & 0 & 1 & 0 & 0 & 0 & 0 & 1 \\
					0 & 0 & 0 & 1 & 0 & 0 & -1 & 0
				\end{pmatrix}.
			\end{equation}
	\end{prop}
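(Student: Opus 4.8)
The plan is to follow, step by step, the proof of Proposition~\ref{prop:Lambdahom}, replacing $\Uhom$ with $\Ulog$ and $\lambda_\alpha$ with $\lambda_{\log}$ (the second line of \eqref{eq:lambdahom}). First I would apply the very same linear symplectic change of coordinates \eqref{eq:changepqtogzw}: the matrix $C$ depends only on the masses (equivalently on $\beta$), not on the potential, it still satisfies $\trasp{C}MC = I$ and $C^{-1}JC = J$, and once the centre of mass is fixed at the origin the Hamiltonian reduces on $E_2 \oplus E_3$ to $\H(Z,W,z,w) = \tfrac12(Z_1^2 + Z_2^2 + W_1^2 + W_2^2) - \Ulog(z,w)$. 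Then I would move to the uniformly rotating frame via a rotation $R(t)$ of angular velocity $\lambda_{\log}$ and the analogous time-dependent generating function, which adds the Coriolis term $\lambda_{\log}(\tilde Z_1 \tilde z_2 - \tilde Z_2 \tilde z_1 + \tilde W_1 \tilde w_2 - \tilde W_2 \tilde w_1)$.

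The only step that genuinely differs is the symplectic scaling. Since $\nabla\Ulog$ is positively homogeneous of degree $-1$ and $D^2\Ulog$ of degree $-2$ --- the formal $\alpha = 0$ instance of the degrees $-\alpha-1$ and $-\alpha-2$ of $\Uhom$ --- the right rescaling is \eqref{eq:scaling} specialised at $\alpha = 0$, namely $\tilde Z = \lambda_{\log}^{1/2}\hat Z$, $\tilde W = \lambda_{\log}^{1/2}\hat W$, $\tilde z = \lambda_{\log}^{-1/2}\hat z$, $\tilde w = \lambda_{\log}^{-1/2}\hat w$, and I would note that this is now an honest symplectic transformation, the conformal multiplier $\lambda_\alpha^{-\alpha/(\alpha+2)}$ having degenerated to $1$. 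Because $\Ulog$ is not homogeneous, this scaling turns $\Ulog(\tilde z,\tilde w)$ into $\Ulog(\hat z,\hat w) + \tfrac12(\log\lambda_{\log})\sum_{i<j} m_i m_j$; the additive constant is dynamically irrelevant, since it is annihilated by $\nabla$ and $D^2$, so after the time rescaling $\tau \= \lambda_{\log} t$ --- which divides the whole system by $\lambda_{\log}$ --- and the translation \eqref{eq:translation} bringing the relative equilibrium to the origin, what is left is to linearise $\tfrac12[\bar Z_1^2 + (\bar Z_2 + 1)^2 + \bar W_1^2 + \bar W_2^2] - \tfrac1{\lambda_{\log}}\Ulog(\bar z,\bar w) + \bar Z_1 \bar z_2 - (\bar Z_2 + 1)(\bar z_1 + 1) + \bar W_1 \bar w_2 - \bar W_2 \bar w_1$ at the origin.

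The remaining task is to compute $\tfrac1{\lambda_{\log}}D^2\Ulog(0,0)$. I would do this directly, using the very same (mass-dependent, $\alpha$-independent) expressions for the mutual distances $d_{12}, d_{13}, d_{23}$ as functions of $(\bar z,\bar w)$ that occur in the proof of Proposition~\ref{prop:Lambdahom} and writing $\Ulog = -\sum_{i<j} m_i m_j \log d_{ij}$; the outcome is the block matrix whose $z$-part is $\diag(1,-1)$ and whose $w$-part is $\bigl(\begin{smallmatrix} a_0 & b_0 \\ b_0 & c_0 \end{smallmatrix}\bigr)$ with $a_0 \= m_1 - \tfrac12(m_2 + m_3)$, $b_0 \= \tfrac{\sqrt3}{2}(m_2 - m_3)$, $c_0 \= -m_1 + \tfrac12(m_2 + m_3)$ --- that is, the $a, b, c$ of Proposition~\ref{prop:Lambdahom} evaluated at $\alpha = 0$. (A quick way to see this without redoing the tedious part: since those $d_{ij}$ carry no $\alpha$, the functions $\Uhom$ and $\Ulog$ in the variables $(\bar z,\bar w)$ satisfy $\partial_\alpha \Uhom|_{\alpha=0} = \Ulog$; differentiating the identity $D^2\Uhom(0,0) = \lambda_\alpha \cdot \bigl[\tfrac1{\lambda_\alpha}D^2\Uhom(0,0)\bigr]$ at $\alpha = 0$ and using that $\lambda_\alpha = \alpha\,\Uhom(\bar x)/\mathcal{I}(\bar x)$ vanishes there with $\partial_\alpha\lambda_\alpha|_{\alpha=0} = \lambda_{\log}$ yields $D^2\Ulog(0,0) = \lambda_{\log}\bigl[\tfrac1{\lambda_\alpha}D^2\Uhom(0,0)\bigr]_{\alpha=0}$.) Inserting this Hessian into the $E_3$-block and applying the same symplectic rotation of positions $w$ and momenta $W$ that leads to \eqref{eq:partesuE3} diagonalises the symmetric block, whose eigenvalues are now $\pm\tfrac13\sqrt{9-\beta}$; recombining with the unchanged $E_2$-block gives exactly $\Lambda$ as in \eqref{eq:Lambdalog}, which is \eqref{eq:Lambda} at $\alpha = 0$. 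I expect the main --- and essentially the only --- obstacle to be pure bookkeeping: keeping track of the additive constant produced by the logarithmic scaling and verifying that no conformal factor is needed, everything else being line-for-line identical to the $\alpha$-homogeneous computation.
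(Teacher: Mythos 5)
Your proposal is correct and follows essentially the same route as the paper: the same change of coordinates \eqref{eq:changepqtogzw}, rotation, the $\alpha=0$ specialisation of the scaling \eqref{eq:scaling} with trivial conformal multiplier, time rescaling by $\lambda_{\log}$, translation \eqref{eq:translation}, and the Hessian computation yielding the block with eigenvalues $\pm\frac{1}{3}\sqrt{9-\beta}$. The only differences are cosmetic bookkeeping: the paper keeps the scaled argument $\lambda_{\log}^{-1/2}\bar z$ inside $\Ulog$ instead of extracting the additive constant, and computes the Hessian directly rather than via your (valid) $\partial_\alpha$-at-$0$ shortcut.
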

	
	\begin{proof}
		We proceed exactly as in Proposition~\ref{prop:Lambdahom} with some slight modifications. After the symplectic change of coordinates \eqref{eq:changepqtogzw}, we have of course to
		replace $\lambda_{\alpha}$ with $\lambda_{\log}$.
	%\red
	{Hence we apply the rotation in the plane
			\[
				R(t) \= \begin{pmatrix}
						\cos{\lambda_{\log} t} & -\sin{\lambda_{\log} t} \\
						\sin{\lambda_{\log} t} & \cos{\lambda_{\log} t}
					\end{pmatrix},
			\]
		in the same way as in \eqref{eq:rotation}, getting
			\[
				\begin{split}
					\H \bigl( \tilde{Z}, \tilde{W}, \tilde{z}, \tilde{w} \bigr) & = \frac{1}{2} \bigl( \tilde{Z}_1^2 + \tilde{Z}_2^2 + \tilde{W}_1^2 + \tilde{W}_2^2 \bigr) - \Ulog(\tilde{z}, \tilde{w}) \\
												& \quad\, + \lambda_{\log} \bigl( \tilde{Z}_1\tilde{z}_2 - \tilde{Z}_2\tilde{z}_1 + \tilde{W}_1\tilde{w}_2 - \tilde{W}_2\tilde{w}_1 \bigr).
				\end{split}
			\]}
		Transformation~\eqref{eq:scaling} is now the following:
			\[
				\begin{cases}
					\tilde{Z} = \lambda_{\log}^{1/2} \hat{Z} \\
					\tilde{W} = \lambda_{\log}^{1/2} \hat{W} \\
					\tilde{z} = \lambda_{\log}^{-1/2} \hat{z} \\
					\tilde{w} = \lambda_{\log}^{-1/2} \hat{w}
				\end{cases}
			\]
		and gives
			\[
				\begin{split}
					\H \bigl( \hat{Z}, \hat{W}, \hat{z}, \hat{w} \bigr) & = \frac{\lambda_{\log}}{2} \bigl( \hat{Z}_1^2 + \hat{Z}_2^2 + \hat{W}_1^2 + \hat{W}_2^2 \bigr)
																							- \Ulog(\lambda_{\log}^{-1/2} \hat{z}, \lambda_{\log}^{-1/2} \hat{w}) \\
													& \quad\, + \lambda_{\log} \bigl( \hat{Z}_1\hat{z}_2 - \hat{Z}_2\hat{z}_1 + \hat{W}_1\hat{w}_2 - \hat{W}_2\hat{w}_1 \bigr).
				\end{split}
			\]
		Then we rescale time by setting $\tau \= \lambda_{\log} t$ and obtain
			\[
				\begin{split}
					\hat{\H} \bigl( \hat{Z}, \hat{W}, \hat{z}, \hat{w} \bigr) & = \frac{1}{2} \bigl( \hat{Z}_1^2 + \hat{Z}_2^2 + \hat{W}_1^2 + \hat{W}_2^2 \bigr)
																		- \frac{1}{\lambda_{\log}} \Ulog(\lambda_{\log}^{-1/2} \hat{z}, \lambda_{\log}^{-1/2} \hat{w}) \\
													& \quad\, + \hat{Z}_1\hat{z}_2 - \hat{Z}_2\hat{z}_1 + \hat{W}_1\hat{w}_2 - \hat{W}_2\hat{w}_1 \\
													& = \frac{1}{\lambda_{\log}} \H \bigl( \hat{Z}, \hat{W}, \hat{z}, \hat{w} \bigr).
				\end{split}
			\]
		Translation~\eqref{eq:translation} sets the equilibrium point at the origin and we have%
			\footnote{Here and in the following, with a slight abuse of notation, we denote by $\lambda_{\log}^{-1/2}\bar{z}$ the vector
					$\trasp{\bigl( \lambda_{\log}^{-1/2}(\bar{z}_1 + 1), \lambda_{\log}^{-1/2} \bar{z}_2 \bigr)}$.}
			\[
				\begin{split}
					\hat{\H} \bigl( \bar{Z}, \bar{W}, \bar{z}, \bar{w} \bigr) & = \frac{1}{2} \bigl[ \bar{Z}_1^2 + (\bar{Z}_2 + 1)^2 + \bar{W}_1^2 + \bar{W}_2^2 \bigr] -
																\frac{1}{\lambda_{\log}} \Ulog(\lambda_{\log}^{-1/2} \bar{z}, \lambda_{\log}^{-1/2} \bar{w}) \\
														& \quad\, + \bar{Z}_1\bar{z}_2 - (\bar{Z}_2 + 1)(\bar{z}_1 + 1) + \bar{W}_1\bar{w}_2 - \bar{W}_2\bar{w}_1.
				\end{split}
			\]
		The Hessian of $\frac{1}{\lambda_{\log}}\Ulog$ evaluated at the origin is
			\[
				\frac{1}{\lambda_{\log}}D^2\Ulog(0, 0) = \begin{pmatrix}
								1 & 0 & 0 & 0 \\
								0 & -1 & 0 & 0 \\
								0 & 0 & \frac{1}{2} (2 m_1 - m_2 - m_3) & \frac{\sqrt{3}}{2} (m_2 - m_3) \\
								0 & 0 & \frac{\sqrt{3}}{2} (m_2 - m_3) & -\frac{1}{2} (2 m_1 - m_2 - m_3)
							\end{pmatrix};
			\]
		an orthogonal transformation applied on the subspace $E_3$ to both positions $\bar{w}$ and momenta $\bar{W}$ diagonalises the lower right corner of
		$\lambda_{\log}^{-1} D^2\Ulog(0,0)$, making it dependent only on $\beta$:
			\[
				\frac{1}{\lambda_{\log}}D^2\Ulog(0,0) = \begin{pmatrix}
								1 & 0 & 0 & 0 \\
								0 & -1 & 0 & 0 \\
								0 & 0 & \dfrac{1}{3} \sqrt{\smash[b]{9 - \beta}} & 0 \\
								0 & 0 & 0 & -\dfrac{1}{3} \sqrt{\smash[b]{9 - \beta}}
							\end{pmatrix}.
			\]
		By computing the Hessian of $\H$ and multiplying on the left by $J$, we find the matrix $\Lambda$ of the statement.
	\end{proof}
	
	\begin{rmk} \label{rmk:alpha=0}
		We observe that \eqref{eq:Lambdalog} can be obtained from \eqref{eq:Lambda} simply by setting $\alpha = 0$. Therefore in the analysis that will follow we shall consider the logarithmic
		case as a subcase of the $\alpha$-homogeneous one. Note that this is a remark \textit{a posteriori}, since we could not deduce it directly from the relation
			\[
				\frac{\Uhom(q) - 1}{\alpha} \sim \Ulog(q)	\qquad \text{as } \alpha \to 0^+,
			\]
		which is only asymptotic.
	\end{rmk}

%	----------------------------------------------------------------------------------------
	\section{Maslov-type index theories} \label{sec:symplecticpreliminaries}
%	----------------------------------------------------------------------------------------

	The aim of this section is to briefly describe some Maslov-type index theories for paths of symplectic matrices as well as for paths of Lagrangian subspaces.
	In Subsection~\ref{subs:maslovsymplectic} we recall a geometric definition of the Maslov index for symplectic paths exploiting the intersection number of a curve and a singular cycle (an
	algebraic variety of codimension~$1$ in the symplectic group). Then, in Subsection~\ref{subsec:omega_Maslov}, we recollect the basic definitions of the $\omega$-index theory, essentially 
	developed by Long and his school, and exhibit the relation with the geometric Maslov-type index. Our main sources for these two subsections are 
	\cite{MR733717, MR1124230, MR1762278} and references therein.
	Subsection~\ref{subs:otherMaslov} is devoted to a brief presentation of other Maslov-type index theories defined through a suitable intersection theory in the Lagrangian Grassmannian
	manifold by means of the crossing forms. We also show the relationship with the Maslov-type index theories previously introduced in the symplectic context. Our basic references for all this
	are \cite{MR1241874, MR1263126, MR2383373, MR2057171, MR1898560, MR2563212, MR2817146, MR0211415, MR2046769, MR2133393, MR2574386}.

	\subsection{Maslov-type index theory for symplectic paths}\label{subs:maslovsymplectic}
	
	Following Long and Zhu in \cite{MR1762278}, we define for all $n \in \N \setminus \{0\}$ the \emph{complex and real symplectic groups}
		\begin{align*}
			\Sp(2n, \C) &\= \Set{M \in \GL(2n, \C) | M^\dagger JM = J}\\
			\Sp(2n) \= \Sp(2n, \R) & \= \Set{M \in \GL(2n, \R) | \trasp{M}JM = J}
		\end{align*}
	and for $ 0 \leq k \leq 2n$ we set
		\begin{align*}
			\Sp_k(2n, \C) &\= \Set{M \in \Sp(2n, \C) | \dim_\C \ker_\C (M - I) = k}\\
			\Sp_k(2n) &\= \Set{M \in \Sp(2n, \R) | \dim\ker(M - I) = k}.
		\end{align*}
	It is clear that one has the following stratifications:
		\[
			\Sp(2n, \C) = \bigcup_{k = 0}^{2n} \Sp_k(2n, \C), \qquad \Sp(2n) = \bigcup_{k = 0}^{2n} \Sp_k(2n).
		\]
	For the sake of the reader we recall the following well-known result, which gives the properties of the stratification.
	
	\begin{prop} \label{thm:decomSp}
		The subsets $\Sp_k(2n, \C)$ and $\Sp_k(2n)$ are, respectively speaking, smooth submanifolds of $\Sp(2n, \C)$ and $\Sp(2n)$, with codimension $k^2$ and $\frac{1}{2}k(k+1)$.
		Moreover, $\Sp_1(2n, \C)$ and $\Sp_1(2n)$ are co-oriented, the transverse orientation being given by the vector field $\frac{d}{dt} (M e^{Jt}) \big\rvert_{t=0}$.
		We have in addition that
			\[
				\overline{\Sp_k(2n, \C)}= \bigcup_{l \geq k} \Sp_l(2n,\C) \qquad \text{and} \qquad \overline{\Sp_k(2n)} = \bigcup_{l \geq k} \Sp_l(2n).
			\]
	\end{prop}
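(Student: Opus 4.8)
The plan is to describe the stratification of $\Sp(2n,\C)$ and $\Sp(2n)$ by the rank of $M-I$, and to identify local models near a given point via the exponential map of the symplectic group.

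First I would reduce to a local statement. Fix $M_0 \in \Sp_k(2n)$ (the complex case being entirely analogous with $\trasp{\cdot}$ replaced by $\dagger$). Since $\Sp(2n)$ acts on itself by left translation and left translation by $M_0^{-1}$ carries $\Sp_j(2n)$ to itself only if we twist — more precisely, the condition $\dim\ker(M-I)=j$ is not left-invariant — I would instead work directly in a chart around $M_0$ given by $M = M_0 e^{JS}$ with $S$ ranging over the symmetric matrices $\Sym(2n,\R)$, which is a local diffeomorphism from a neighbourhood of $0$ in $\Sym(2n,\R)$ onto a neighbourhood of $M_0$ in $\Sp(2n)$ (the tangent space to $\Sp(2n)$ at $M_0$ is $M_0 \cdot \ssp(2n) = \{M_0 JS : S \in \Sym(2n,\R)\}$). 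In these coordinates one has $M - I = M_0 e^{JS} - I = M_0(e^{JS} - M_0^{-1})$, so $\rk(M-I) = \rk(e^{JS} - M_0^{-1})$, and since $e^{JS} - M_0^{-1} = -(M_0^{-1} - I) + JS + O(\norm{S}^2)$, the analysis near $S=0$ is governed by how the linear map $S \mapsto JS$ interacts with the fixed matrix $A_0 \= M_0^{-1} - I$, whose kernel has dimension $k$.

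Next I would perform a linear-algebra normalisation. Restricting and co-restricting to $\ker(M_0 - I) = \ker A_0$ (a $k$-dimensional space), the symplectic form is nondegenerate on it iff... actually it need not be, but the key point is a Taylor expansion argument: the locus $\{\rk(M-I) \le 2n-k\}$, i.e.\ $\{\dim\ker(M-I)\ge k\}$, is cut out near $M_0$ by the vanishing of the $(2n-k+1)\times(2n-k+1)$ minors, but the effective equations reduce, after eliminating the transverse directions by the implicit function theorem, to a quadratic map $Q$ from a complement of the tangent space of $\Sp_k$ into $\Sym(k,\R)$ (real case) or $\Herm(k,\C)$ (complex case): concretely $Q(S) = P \, JS\, (M_0-I)^{+} \, JS \, P + \cdots$ evaluated on $\ker A_0$, where $P$ is the projection onto $\ker A_0$ and $(M_0-I)^+$ a partial inverse. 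The essential claim is that the differential of the defining map for "$\ker(M-I)$ jumps by exactly $k$" along $\Sp_k$ vanishes, while the Hessian in the normal directions is nondegenerate and takes values in a space of dimension $\frac12 k(k+1)$ (symmetric $k\times k$) in the real case, and $k^2$ (Hermitian $k\times k$) in the complex case — this is precisely because the block of $S$ that survives the projection is a symmetric (resp.\ Hermitian) $k\times k$ matrix. This gives both the submanifold property (the map is a submersion onto its normal-bundle fibre) and the codimension count $\frac12 k(k+1)$ resp.\ $k^2$.

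For the co-orientation of $\Sp_1(2n)$ and $\Sp_1(2n,\C)$: when $k=1$, $\ker(M_0-I)$ is spanned by a single vector $v_0$, and the normal direction is one-dimensional (real case: $\Sym(1,\R) \cong \R$). The curve $t \mapsto M_0 e^{Jt}$ has velocity $M_0 J$ at $t=0$; I would show that the image of this vector under the quotient $T_{M_0}\Sp(2n)/T_{M_0}\Sp_1(2n) \cong \R$ is nonzero by pairing against the quadratic form $\langle (M_0-I) v_0, \cdot\rangle$ evaluated appropriately, i.e.\ the sign of $\langle J v_0, w_0\rangle$ where $w_0$ spans $\ker(M_0^T - I)$; nonvanishing follows from the nondegeneracy of $J$ together with a short symplectic-algebra argument showing $v_0$ and $w_0$ are not $J$-orthogonal. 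This gives a consistent transverse orientation, hence co-orientability. Finally, the closure statement $\overline{\Sp_k} = \bigcup_{l\ge k}\Sp_l$ follows because $\rk(M-I)$ is lower semicontinuous (so $\dim\ker(M-I)\ge k$ is a closed condition, giving $\supseteq$), and conversely any $M_1 \in \Sp_l$ with $l > k$ can be approached within $\Sp_k$ by perturbing along a path $M_1 e^{JS(t)}$ with $S(t)$ chosen from the Hessian's positive cone to drop the kernel dimension to exactly $k$ — existence of such $S$ is guaranteed by the nondegeneracy of the normal Hessian established above. The main obstacle is the careful bookkeeping in the second step: identifying exactly which entries of the symmetric matrix $S$ descend to the normal space to $\Sp_k$ and verifying the Hessian is nondegenerate there, which is where the asymmetry between the real count $\frac12 k(k+1)$ and the complex count $k^2$ genuinely enters.
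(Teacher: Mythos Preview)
The paper does not prove this proposition: it is stated as a ``well-known result'' recalled from the literature (the cited sources are \cite{MR733717, MR1124230, MR1762278}), with no argument given. So there is no proof in the paper to compare yours against.

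That said, your proposal contains a conceptual slip worth flagging. You describe the normal map as a \emph{quadratic} map $Q(S) = P\,JS\,(M_0-I)^+\,JS\,P + \cdots$ and speak of the ``Hessian in the normal directions''. In fact the identification of the normal space with $\Sym(k,\R)$ (resp.\ $\Herm(k,\C)$) is a \emph{first-order} phenomenon, not a second-order one. Concretely: write a nearby symplectic matrix as $M = M_0 e^{JS}$ with $S$ symmetric, and set $V \= \ker(M_0 - I)$. The crossing form
\[
\Gamma_S(v,w) \= \omega\bigl(v,\tfrac{d}{dt}\big|_{t=0} M_0 e^{tJS} w\bigr) = \langle Jv, M_0 JS\, w\rangle
\]
reduces, for $v,w \in V$ (using $M_0 v = v$ and $M_0^T J = J M_0^{-1}$), to $\langle Sv, w\rangle$. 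Thus $S \mapsto S|_V$ is a \emph{linear} surjection $\Sym(2n,\R) \twoheadrightarrow \Sym(V) \cong \Sym(k,\R)$, and its kernel is the tangent space to $\Sp_k$ at $M_0$. This immediately gives the codimension $\tfrac12 k(k+1)$ (resp.\ $k^2$ in the complex case, where the same computation yields a Hermitian form). Your Schur-complement-style expression $P\,JS\,(M_0-I)^+\,JS\,P$ is the next-order correction to the defining equations, not the leading term; the leading term is just $P\,S\,P$ restricted to $V$.

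Your co-orientation argument is essentially correct but can be streamlined using the same crossing form: the curve $t \mapsto M_0 e^{Jt}$ corresponds to $S = I$, so $\Gamma_I(v_0,v_0) = \abs{v_0}^2 > 0$, which is manifestly nonzero and sign-consistent as $M_0$ varies in $\Sp_1$. (Your route via $w_0 \in \ker(M_0^T - I)$ also works once you note $w_0 = J v_0$.) The closure argument is fine.
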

	By Proposition~\ref{thm:decomSp} the intersection points of the curve
		\[
			\gamma(t) \= Me^{Jt}, \qquad M \in \Sp_1(2n,\C)
		\]
	with the cycle $\overline{\Sp_1(2n, \C)}$ form a discrete subset of $\gamma(\R)$. We recall that a matrix in $\Sp(2n,\C)$ is called \emph{non-degenerate} if it does not admit $1$ as an
	eigenvalue. A straightforward computation allows to see that for a continuous path $\gamma:[a,b] \to \Sp(2n, \C)$ there exists $\delta > 0$ such that for any
	$\eps \in (-\delta, \delta)\setminus \{0\}$ the (perturbed) path $t \mapsto \gamma(t) e^{-\eps J}$ is non-degenerate, meaning that it has non-degenerate endpoints.
	
	%\red
	{
	\begin{defn}\label{def:maslov1}
		Let  $\gamma:[a,b] \to \Sp(2n, \C)$. We define its {\em geometric Maslov-type index\/} to be the intersection number of $t\mapsto \gamma(t) e^{-\varepsilon J}$ with
		$\overline{\Sp_1(2n, \C)}$ for all $ \varepsilon \in (0,\delta)$ (where $\delta$ is such that the perturbed path is non-degenerate).
			\begin{equation} \label{eq:indice1}
				\igeo(\gamma) \= \bigl[ \gamma e^{-\varepsilon J}:\overline{\Sp_1(2n, \C)}\bigr],
			\end{equation}
		where the right-hand side of \eqref{eq:indice1} is the usual homotopy intersection number.
	\end{defn}
	}

	For any $\omega \in \U \= \Set{ z \in \C | \abs{z} = 1 }$ and $T > 0$ it is convenient to define the set
		\[
			\mathscr P_T(2n) \= \Set{ \gamma \in \mathscr{C}^0 \bigl( [0, T]; \Sp(2n,\C) \bigr) | \gamma(0) = I_{2n} }
		\]
	and its subset
		\[
			\mathscr P ^*_{T, \omega}(2n) \= \Set{\gamma \in \mathscr{P}_T(2n) | \gamma(T) \in \omega \Sp_0(2n,\C)}.
		\]
		
	Consider now two square matrices $M_1$ and $M_2$ of sizes $2m_1 \times 2m_1$ and $2m_2 \times 2m_2$ respectively (with $m_1, m_2 \in \N \setminus \{0\}$) such that they can both be
	written in the form
			\[
				M_k \= \begin{pmatrix}
						A_k & B_k \\
						C_k & D_k
					\end{pmatrix}, \qquad  k= 1, 2,
			\]
		each block being of size $m_k \times m_k$. The \emph{diamond product} of $M_1$ and $M_2$ is defined (see~\cite[page~17]{MR1898560}) as the following
		$2(m_1 + m_2) \times 2(m_1 + m_2)$ matrix:
			\begin{equation} \label{def:diamondproduct}
				M_1 \diamond M_2 \= \begin{pmatrix}
									A_1 & 0 & B_1 & 0 \\
									0 & A_2 & 0 & B_2 \\
									C_1 & 0 & D_1 & 0 \\
									0 & C_2 & 0 & D_2
								\end{pmatrix}.
			\end{equation}
		The $k$-fold diamond product of $M$ with itself is denoted by $M^{\diamond k}$. The \emph{symplectic sum} of two paths $\gamma_j \in \mathscr P_T(2n_j)$, with $j=1,2$ and
		$n_1, n_2 \in \N \setminus \{0\}$, is defined in a natural way:
			\[
				(\gamma_1 \diamond \gamma_2)(t) \= \gamma_1(t) \diamond \gamma_2(t), \quad \forall\, t \in [0, T].
			\]
	
	%\red
	{	
	We list the basic properties of the geometric Maslov-type index that we need in the paper.
	\begin{enumerate}[(i)]
		\item (Path additivity) Let $\gamma:[a,b] \to \Sp(2n, \C)$ and $c \in [a,b]$. Then
				\[
					\igeo(\gamma)= \igeo(\gamma|_{[a,c]})+ \igeo(\gamma|_{[c,b]}).
				\]
		\item ($\diamond$-additivity) Let $\gamma_1: [a,b] \to \Sp(2k, \C)$ and $\gamma_2:[a,b]\to \Sp(2l, \C)$ be two symplectic paths. Then we have
				\[
					\igeo(\gamma_1 \diamond \gamma_2)= \igeo(\gamma_1) + \igeo(\gamma_2).
				\]
	 	\item (Homotopy invariance) For any two paths $\gamma_1$ and $\gamma_2$, if $\gamma_1$ is homotopic to $\gamma_2$ (written $\gamma_1 \sim \gamma_2$) in $\Sp(2n,\C)$ with
		 	either fixed or always non-degenerate endpoints, there holds
				\[
					\igeo(\gamma_1) = \igeo(\gamma_2).
				\]
		\item (Normalisation) If $n=1$ then
				\[
					\igeo(e^{it} I, t \in [0,a])= \begin{cases}
										1 & \text{if $a \in (0,2\pi)$}\\
										0 & \text{if $a=2\pi$}.
									\end{cases}
				\]
		\item (Affine scale invariance) For all $k>0$ and $\gamma \in \mathscr{P}_{kT}(2n)$ we have
				\[
					\igeo\big(\gamma(kt), t \in [0,\tau] \big) = \igeo\big(\gamma(t), t \in [0,k\tau]\big).
				\]
	\end{enumerate}
	}

	\subsection{The $\omega$-index theory and the iteration formula}\label{subsec:omega_Maslov}
	
	For any two continuous paths $\gamma, \delta : [0, T] \to \Sp(2n,\C)$ such that $\gamma(T) = \delta(0)$,
	we define their \emph{concatenation} $\gamma * \delta : [0, T] \to \Sp(2n, \C)$ as
		\[
			(\gamma * \delta)(t) \= \begin{cases}
								\gamma(2t) & \text{if } 0 \leq t \leq \frac{T}{2} \\
								\delta(2t - T) & \text{if } \frac{T}{2} \leq t \leq T.
							\end{cases}
		\]
	
	For any $n \in \N \setminus \{0 \}$ we also define a special continuous symplectic path $\xi_n : [0, T] \to \Sp(2n)$ as follows:
		\begin{equation} \label{eq:xin}
			\xi_n(t) \= \begin{pmatrix}
						2 - \dfrac{t}{T} & 0 \\
						0 & \biggl( 2 - \dfrac{t}{T} \biggr)^{-1}
					\end{pmatrix}^{\diamond n}, \qquad \forall\, t \in [0, T].
		\end{equation}

	\begin{defn}[{\cite{MR1674313,MR2525637}}] \label{def:indicediMaslov}
		Let $\omega \in \U$. If $\gamma \in \mathscr{P}_T(2n)$, we define 
			\[
				\nu_\omega(\gamma) \= \dim_\C\ker_\C \big(\gamma(T)-\omega I_{2n} \big).
			\]
		If $\gamma \in \mathscr P_{T,\omega}^*(2n)$  the  \emph{$\omega$-index} is defined as
			\begin{equation} \label{eq:indiceomega}
				i_\omega(\gamma) \= \bigl[  \overline \omega\gamma * \xi_n : \overline{\Sp_1(2n,\C)}\bigr].
			\end{equation}
		If $\gamma \in \P_T(2n) \setminus \P_{T, \omega}^*(2n)$, we let $\mathscr{F}(\gamma)$ be the set of all open neighbourhoods $U$ of $\gamma$ in $\P_T(2n)$, and define
			\[
				i_\omega(\gamma) \= \sup_{U \in \mathscr{F}(\gamma)} \inf \Set{i_\omega(\delta) | \delta \in U \cap \P_{T,\omega}^*(2n)}. 
			\]
		%\red
		{
		Finally the {\em $\omega$-geometric Maslov index\/} is defined as			
			\begin{equation} \label{eq:indiceomegageo}
				\igeomega(\gamma) \= \bigl[ \overline \omega\gamma  e^{-\varepsilon J} : \overline{\Sp_1(2n,\C)}\bigr],
			\end{equation}
		}
		The right-hand side of \eqref{eq:indiceomega} %\red
		{and \eqref{eq:indiceomegageo}} is the usual homotopy intersection number, the orientation of $\overline \omega\gamma* \xi_n$ is 
		its positive time direction under homotopies with fixed end-points and $\varepsilon$ is a positive real number sufficiently small. 
	\end{defn}
	
	We list the basic properties of the $\omega$-index that we need in the sequel. 
	\begin{enumerate}[(i)]
		\item (Lower semicontinuity) For all $\gamma: [a,b] \to \mathscr P_T(2n)$ and $c \in [a,b]$ we have
				\[
					i_\omega(\gamma) = \inf\Set{i_\omega(\beta)| \beta \in \mathscr P^*_T(2n)\  \text{is sufficiently $\mathscr C^0$-close to $\gamma$}}.
				\]
		\item ($\diamond$-additivity) Let $\gamma_1: [a,b] \to \Sp(2k, \C)$ and $\gamma_2:[a,b]\to \Sp(2l, \C)$ be two symplectic paths. Then we have
				\[
					i_\omega(\gamma_1 \diamond \gamma_2)= i_\omega(\gamma_1) + i_\omega(\gamma_2).
				\]
		\item (Homotopy invariance) For any two paths $\gamma_1$ and $\gamma_2$, if $\gamma_1 \sim \gamma_2$ in $\Sp(2n,\C)$ with either fixed or always non-degenerate endpoints,
			there holds
				\[
					i_\omega(\gamma_1)=i_\omega(\gamma_2).
				\]
		\item (Affine scale invariance) For all $k>0$ and $\gamma \in \mathscr{P}_{kT}(2n)$, we have
				\[
					i_\omega\big(\gamma(kt), t \in [0,T]\big)= i_\omega\big(\gamma(t), t \in [0,kT]\big).
				\]
	\end{enumerate}	
	
	The proofs of these properties are consequences of \cite[Lemma~2.2 (3), Corollary~2.1, Theorem~2.1]{MR1762278} and of the index theory contained in \cite{MR1674313}.
	
	Let $\gamma \in \P_T(2n)$ and $m \in \N \setminus \{0\}$. The \emph{$m$-th iteration of $\gamma$} is $\gamma^m : [0, mT] \to \Sp(2n)$ defined as
		\[
			\gamma^m(t) \= \gamma(t - jT) \bigl( \gamma(T) \bigr)^j, \qquad \text{for } j T \leq t \leq (j + 1)T, \quad  j = 0, \dotsc, m - 1.
		\]
	The next Bott-type iteration formula is crucial in order to study the geometric multiplicity of periodic orbits and plays a big role in the question of linear stability.
	
	\begin{lem}[Bott-Long iteration formula, {\cite[Theorem~9.2.1]{MR1898560}}] \label{thm:bott}
		For any $z \in \mathbb U$, $\gamma \in {\mathscr P}_T(2n)$ and $m \in \N \setminus \{0\}$ the following formula holds:
			\begin{equation} \label{eq:Bott}
				i_z(\gamma^m) = \sum_{\omega^m=z} i_\omega(\gamma).
			\end{equation}
		In particular one has $i_1(\gamma^2) = i_1(\gamma) + i_{-1}(\gamma)$.
	\end{lem}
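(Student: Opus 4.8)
The plan is to follow Long's strategy in \cite{MR1898560}, reducing \eqref{eq:Bott} to a direct verification on a short list of normal forms by exploiting the $\diamond$- and homotopy-properties of the $\omega$-index recalled above.

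The first thing I would record is that iteration is compatible with the diamond product and with homotopy rel endpoints. Since $(\gamma_1 \diamond \gamma_2)(T) = \gamma_1(T) \diamond \gamma_2(T)$ and the diamond product commutes with taking matrix powers, one gets $(\gamma_1 \diamond \gamma_2)^m = \gamma_1^m \diamond \gamma_2^m$; combined with the $\diamond$-additivity of $i_z$ and of every $i_\omega$, this makes \eqref{eq:Bott} $\diamond$-additive, so that it suffices to treat $\diamond$-indecomposable pieces. Similarly, if $\gamma \sim \delta$ rel endpoints then $\gamma^m \sim \delta^m$ rel endpoints — the iterated homotopy glues continuously across the break points $jT$ exactly because $\gamma(T) = \delta(T)$ stays fixed — so by homotopy invariance I may replace $\gamma$ by any path homotopic to it rel endpoints before iterating. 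Adding to this the conjugation and $\Omega$-homotopy invariance of the $\omega$-index (this is where the bulk of the machinery of \cite{MR1898560} is used), one may assume $\gamma(T)$ is in symplectic normal form, and then $\gamma$ is, up to homotopy rel endpoints, a $\diamond$-sum of \emph{basic paths}: rotations $t \mapsto R(\theta t)$ in $\Sp(2)$, hyperbolic paths $t \mapsto \diag(e^{\mu t}, e^{-\mu t})$, and the elementary $2$- and $4$-dimensional paths realising the Jordan blocks at eigenvalues on $\U$ (in particular at $\pm 1$). By the reduction above it then suffices to prove \eqref{eq:Bott} for each basic path.

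For a basic path the $m$-th iterate is again a basic path of the same type, with the rotation angle, respectively the hyperbolic or parabolic parameter, multiplied by $m$; so I would compute both $i_z(\gamma^m)$ and $\sum_{\omega^m = z} i_\omega(\gamma)$ directly from path-additivity, the normalisation property, and the description of $\igeomega$ in Definition~\ref{def:indicediMaslov}. In the rotation case the algebraic identity that makes the two sides agree is the classical floor-function identity $\sum_{k=0}^{m-1} \bigl\lfloor x + \tfrac{k}{m} \bigr\rfloor = \lfloor m x \rfloor$ — the same one underlying Bott's original iteration formula for closed geodesics — applied with $x$ an affine function of the angle $\theta T$ and of $\arg z$; the hyperbolic and Jordan cases are easier, the $\omega$-index there being locally constant in $\omega$ and plainly additive under iteration. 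Piecing these verifications together through the reductions above gives \eqref{eq:Bott}, and specialising to $m = 2$, $z = 1$ — whose square roots are $\pm 1$ — gives $i_1(\gamma^2) = i_1(\gamma) + i_{-1}(\gamma)$.

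The hard part, I expect, will be the bookkeeping at the degenerate values, that is when $\gamma(T)$ (equivalently $\gamma^m(mT)$) has some $m$-th root of $z$ as an eigenvalue, so that the indices are only defined through the lower-semicontinuous extension of Definition~\ref{def:indicediMaslov}. One must check that a single perturbation $e^{-\eps J}$ can be used coherently for $\gamma$ and for all the relevant iterates, so that the crossing contributions split correctly among the $m$-th roots of $z$ and nothing is lost or double-counted when several of those roots are eigenvalues of $\gamma(T)$. A cleaner and more conceptual route to the same end — and a good sanity check — is the analytic one: realising $\gamma$ as the fundamental solution of a linear Hamiltonian system $\dot z = J B(t) z$ with $B$ of period $T$, the finite Fourier transform over $\Z/m\Z$ block-diagonalises the $z$-twisted boundary problem on $[0, mT]$ into the direct sum, over $\omega^m = z$, of the $\omega$-twisted boundary problems on $[0,T]$; the additivity of the Morse index and of the nullity under this orthogonal splitting is precisely \eqref{eq:Bott}.
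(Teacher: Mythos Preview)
The paper does not prove this lemma: it is stated as a quotation from Long's monograph \cite[Theorem~9.2.1]{MR1898560} and used as a black box, so there is no in-paper argument to compare against. Your outline is essentially a sketch of Long's own proof strategy (reduction via $\diamond$- and homotopy-invariance to basic normal-form paths, then direct verification), together with the alternative analytic argument via the finite Fourier decomposition of the twisted boundary value problem; both routes are standard and either would serve, though a fully rigorous execution of the normal-form reduction requires the substantial machinery of \cite{MR1898560} that you allude to but do not reproduce.
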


	%	--------------------------------------------------------------------------------------------------------------
		\subsection{Morse index of paths of Lagrangian subspaces and relation with other Maslov-type indices} \label{subs:otherMaslov}
	%	--------------------------------------------------------------------------------------------------------------

	Let $(\C^{2n}, \{\cdot, \cdot\})$ be the complex symplectic space whose complex symplectic structure can be represented through the Hermitian product $(\cdot, \cdot)$ as
		\[
			\{v,w\} \= (J v, w), \qquad \forall\, v, w \in \C^{2n}.
		\]
	We denote by $\Lag(\C^{2n})$ the space of all Lagrangian subspaces in $\C^{2n}$.
	 
	Let $l:[a,b] \to \Lag(\C^{2n})$ be a $\mathscr C^1$-curve of Lagrangian subspaces and let $L_0 \in \Lag(\C^{2n})$. Fix $t \in [a,b]$ and let $W$ be a fixed Lagrangian complement of $l(t)$.
	If $s$ belongs to a suitable small neighborhood of $t$ for every $v \in l(t)$ we can find a unique vector $w(s)\in W$ in such a way that $v + w(s) \in l(s)$.
	
	\begin{defn}
		The \emph{crossing form} $\Gamma(l, L_0, t^*)$ at $t^*$ is the quadratic form $\Gamma(l, L_0, t^*) : l(t^{*})\cap L_0 \to \R$ defined by
			\begin{equation}\label{eq:crossingform}
				\Gamma(l, L_0, t^*)[v] \= \dfrac{d}{ds}\{v,w(s)\}\big\vert_{s=t^*}.
			\end{equation}
		The number $t^*$ is said to be a \emph{crossing instant for $l$ with respect to $L_0$} if  $l(t^*) \cap L_0 \neq \{0\}$ and it is called \emph{regular} if the crossing form is non-degenerate.
	\end{defn}  
	
	Let us remark that regular crossings are isolated and hence on a compact interval they are finitely many. Following \cite[Definition~3.1, Theorem~3.1]{MR1762278} we give the next definition.
	
	\begin{defn}
	     If $l$ has only regular crossings 
		 with respect to $L_0$, the {\em Maslov index of $l$ with respect to $L_0$} is defined as
		 	\begin{equation}\label{eq:CLMmaslovestremideg}
		 		\iclm(L_0, l, [a,b]) \= m^+\big(\Gamma(l, L_0, a) \big) + \sum_{t^* \in (a,b)} \sgn \Gamma(l,L_0,t^*) - m^- \big(\Gamma(l,L_0,b) \big)
		 	\end{equation}
		 where the summation runs over all crossings $t^* \in (a,b)$, the symbols $m^+, m^-$ denote the dimension of the positive and negative spectral subspaces respectively and
		 $\sgn \= m^+ - m^-$ is the signature.
	\end{defn}
	Let $V \= \C^{2n} \oplus \C^{2n}$, and $(\cdot, \cdot)$ be the standard Hermitian product of $V$. We define
		\[
			\{v, w\}_{\mathcal{J}} \= (\mathcal J v, w), \qquad \forall\, v, w \in V
		\]
	where
		\[
			\mathcal{J} \= \begin{pmatrix}
						-J &0\\
						0 & J
					\end{pmatrix}.
		\]
	By a direct calculation it follows that if $M \in \Sp(2n, \C)$ then the complex subspace
		\[
			\Gr(M) \= \Set{\begin{pmatrix} x\\Mx\end{pmatrix}|x \in \C^{2n}}
		\]
	is a Lagrangian subspace of the (complex) symplectic space $(V, \{\cdot, \cdot\}_{\mathcal{J}})$.
	
	Given a path of symplectic matrices $\gamma:[a,b] \to \Sp(2n,\C)$ then the \emph{graph of the path $\gamma$}, $\Gr(\gamma)$, is defined as the path of graphs:
	$\Gr(\gamma)(t) \= \Gr(\gamma(t))$, $t \in [a,b]$, and it is indeed a path of Lagrangian subspaces of $(V, \{\cdot, \cdot\}_{\mathcal{J}})$.
	The next result gives the relationship between the geometric index of a path of symplectic matrices and the Maslov index of the corresponding path of Lagrangian subspaces with respect to
	the diagonal $\Delta \= \Gr(I_{2n})$.
	
	%\red
	{
	\begin{prop}
		For all path $\gamma:[a,b] \to \Sp(2n,\C)$ we have
			\[
				\igeo(\gamma) = \iclm(\Delta, \Gr( \gamma), [a, b]),
			\]
		where the crossing forms involved in the right-hand side are calculated using the symplectic structure $\{\cdot,\cdot\}_{\mathcal{J}}$ in $\C^{2n} \oplus \C^{2n}$.
	\end{prop}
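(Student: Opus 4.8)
The plan is to match the two sides one crossing at a time, using that both of them are additive under concatenation of paths — for $\igeo$ this is the path additivity listed above, and for $l\mapsto\iclm(\Delta,l,\cdot)$ it is built into formula~\eqref{eq:CLMmaslovestremideg} — and that $\Gr$ turns a concatenation of symplectic paths into the corresponding concatenation of Lagrangian paths. The starting remark is that $\gamma(t)$ meets the cycle $\overline{\Sp_1(2n,\C)}$, that is $1\in\sigma(\gamma(t))$, precisely when $\Gr(\gamma(t))\cap\Delta\neq\{0\}$, so the two sides have the same active instants. By homotopy invariance of both sides I would first replace $\gamma$ by a homotopic $\mathscr C^1$-path with the same endpoints all of whose crossings — interior and boundary alike — are regular, which a generic perturbation rel endpoints achieves, and then use additivity to reduce to three elementary pieces: (a) a subpath with no crossing; (b) a subpath with one regular interior crossing and non-degenerate endpoints; (c) a subpath with one regular crossing at an endpoint and a non-degenerate other endpoint. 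Case (a) is immediate: for small $\varepsilon$ the path $t\mapsto\gamma(t)e^{-\varepsilon J}$ still misses the cycle, so $\igeo=0$, and $\Gr(\gamma)$ never meets $\Delta$, so the right-hand side of~\eqref{eq:CLMmaslovestremideg} is $0$ as well.

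The computational heart is the crossing form of $\Gr(\gamma)$ relative to $\Delta$. At a crossing instant $t^*$ I would identify $\Gr(\gamma(t^*))\cap\Delta$ with $\ker(\gamma(t^*)-I)$ via $(v,v)\leftrightarrow v$, take as Lagrangian complement of $\Gr(\gamma(t^*))$ in $(V,\{\cdot,\cdot\}_{\mathcal J})$ the explicit subspace $W\=\Gr(-\gamma(t^*))$ — which is a complement for every $s$ near $t^*$ because $\gamma(s)+\gamma(t^*)$ is then invertible — and run the definition of $\Gamma$: requiring the lift $(v,v)+w(s)=(y(s),\gamma(s)y(s))$ to have $w(s)\in W$ forces $(\gamma(s)+\gamma(t^*))y(s)=2v$, hence $y(t^*)=v$ and $\{(v,v),w(s)\}_{\mathcal J}=\bigl(Jv,(\gamma(s)-I)y(s)\bigr)$; differentiating at $s=t^*$ and noting that the term $\bigl(Jv,(\gamma(t^*)-I)\dot y(t^*)\bigr)$ vanishes because $\gamma(t^*)^{\dagger}Jv=J\gamma(t^*)^{-1}v=Jv$, one obtains
	\[
		\Gamma\bigl(\Gr(\gamma),\Delta,t^*\bigr)[(v,v)]=\bigl(Jv,\dot\gamma(t^*)v\bigr)=\{v,\dot\gamma(t^*)v\},\qquad v\in\ker(\gamma(t^*)-I),
	\]
which is real by differentiation of $\gamma^{\dagger}J\gamma=J$, so that $m^{\pm}$ in~\eqref{eq:CLMmaslovestremideg} are its complex inertia indices. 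The next step is to recognise this as the form governing the local intersection of $t\mapsto\gamma(t)e^{-\varepsilon J}$ with the cycle: by Proposition~\ref{thm:decomSp} the smooth stratum $\Sp_1(2n,\C)$ is co-oriented by $M\mapsto\tfrac{d}{dt}(Me^{Jt})\big\rvert_{t=0}=MJ$, and if $Mv=v$ with $M$ symplectic then $M^{\dagger}JM=J$ gives $\bigl(Jv,MJv\bigr)=\abs{v}^2>0$, so the positive co-orientation side of $\Sp_1(2n,\C)$ is exactly where $v\mapsto(Jv,\dot\gamma(t^*)v)$ is positive.

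With this in hand I would close the three cases. In (b), since $\Sp_\ell(2n,\C)$ has codimension $\ell^2\geq4$ for $\ell\geq2$, a further generic perturbation rel endpoints makes $t\mapsto\gamma(t)e^{-\varepsilon J}$ meet the cycle only transversally inside $\Sp_1(2n,\C)$, splitting the regular crossing $t^*$ into crossings whose signed total is $\sgn\Gamma(\Gr(\gamma),\Delta,t^*)$ — exactly the contribution of $t^*$ in~\eqref{eq:CLMmaslovestremideg}. In (c), the displacement by $e^{-\varepsilon J}$ pushes the endpoint onto the negative co-oriented side of $\Sp_1(2n,\C)$, so that as $t$ runs over the interval precisely the positive directions of the crossing form at $a$ are swept across the cycle — each contributing $+1$ — while at the other end precisely the negative directions of the crossing form at $b$ are swept across — each contributing $-1$; this produces the boundary terms $+m^+(\Gamma(\Gr(\gamma),\Delta,a))$ and $-m^-(\Gamma(\Gr(\gamma),\Delta,b))$ of~\eqref{eq:CLMmaslovestremideg}. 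Summing over the pieces gives $\igeo(\gamma)=\iclm(\Delta,\Gr(\gamma),[a,b])$.

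The step I expect to be the real obstacle is the sign bookkeeping of the last paragraph: one must verify that the co-orientation of $\Sp_1(2n,\C)$ fixed in Proposition~\ref{thm:decomSp} is the one for which the graph's crossing form — computed with the specific structure $\mathcal J=\diag(-J,J)$, where the ordering of the two blocks is precisely what pins down the overall sign — is positive on the incoming side, and that at a degenerate endpoint the perturbation $e^{-\varepsilon J}$ distributes the resolved crossings over the two sides of the cycle according to $m^{\pm}$ and not the other way round. The remaining ingredients — homotopy invariance, additivity, the fact that $\Gr$ intertwines $J$ on $\C^{2n}$ with $\mathcal J$ on $V$, and the standard resolution of regular crossings — are routine and spelled out in~\cite{MR1263126,MR1762278}; alternatively the whole identity can be read off from the index theory of Long and Zhu once the conventions above are reconciled.
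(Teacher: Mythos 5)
Your argument is correct, but it is genuinely different in character from what the paper does: the paper's proof of this proposition is a two-line citation (Formula~(3.4), Proposition~3.1, Theorem~3.1(ii) and Definition~3.1 of Long--Zhu together with Proposition~4.1 of Cappell--Lee--Miller), whereas you re-derive the identity from scratch. Your computational core is sound: with $W=\Gr(-\gamma(t^*))$ as Lagrangian complement the lift equation $(\gamma(s)+\gamma(t^*))y(s)=2v$ is right, the cross term $\bigl(Jv,(\gamma(t^*)-I)\dot y(t^*)\bigr)$ does vanish by $\gamma(t^*)^{\dagger}Jv=Jv$, and the resulting crossing form $v\mapsto\{v,\dot\gamma(t^*)v\}$ on $\ker(\gamma(t^*)-I)$ is exactly the form whose positivity characterises the positive side of the co-orientation fixed in Proposition~\ref{thm:decomSp} (your check $(Jv,MJv)=(JM^{-1}v,Jv)=\abs{v}^2$ is the key identity). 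The endpoint bookkeeping you flag as the remaining obstacle does come out as you describe: with the perturbation $\gamma e^{-\varepsilon J}$ a degenerate initial point contributes $+m^+$ and a degenerate final point $-m^-$, as one confirms on the model paths $t\mapsto e^{\pm J(t-c)}$, consistently with the normalisation property and with Example~\ref{ex:Ralpha}. What your route buys is a self-contained, convention-explicit proof in which the sign conventions ($\mathcal J=\diag(-J,J)$, the co-orientation, the asymmetric endpoint terms in \eqref{eq:CLMmaslovestremideg}) are visibly reconciled; what the paper's route buys is brevity and the reassurance that the conventions used here are literally those of the quoted sources, which is what the later computations (Lemma~\ref{thm:chiave}, the examples) rely on. The only soft spots in your write-up are standard and acknowledged: homotopy invariance of $\iclm$ for merely continuous paths and the genericity/regularisation steps are taken from \cite{MR1263126,MR1762278} rather than proved, and the final sign checks are asserted rather than carried out in full; none of these affects correctness.
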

	\begin{proof}
	 	The proof of this proposition follows by \cite[Formula~(3.4) in Proposition~3.1, Theorem~3.1 (ii), Definition~3.1]{MR1762278} and by \cite[Proposition~4.1]{MR1263126}.
	\end{proof}
	}
	 	
	By \cite[Lemma~4.6, Formulas~(2.15)--(2.16)]{MR2525637} we immediately obtain
	
	\begin{lem} \label{thm:chiave}
		For any path $\gamma \in \mathscr{P}_T(2n)$ we have the following equalities:
		\begin{enumerate}
			\item $i_1(\gamma) + n = \iclm\big(\Delta, \Gr(\gamma), [0,T]\big)$;
	 	 	\item $i_\omega(\gamma) = \iclm\big(\Gr(\omega I_{2n}), \Gr(\gamma), [0,T]\big)$ for all $\omega \in \mathbb{U}\setminus\{1\}$.
	 	\end{enumerate}
	\end{lem}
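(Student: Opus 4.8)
The strategy is to transport both equalities through the graph map $\gamma \mapsto \Gr(\gamma)$, which turns a path of symplectic matrices into a path of Lagrangian subspaces of $(V,\{\cdot,\cdot\}_{\mathcal{J}})$, and to deduce them from the Proposition above — which gives $\igeo(\gamma) = \iclm(\Delta, \Gr(\gamma), [a,b])$ — together with the comparison between the geometric intersection index $\igeomega$ and Long's normalised index $i_\omega$. All crossing forms below refer to $\{\cdot,\cdot\}_{\mathcal{J}}$, and one uses repeatedly that, for $t^\ast \in [0,T]$,
\[
	\Gr(\gamma(t^\ast)) \cap \Gr(\omega I_{2n}) \neq \{0\} \iff \overline\omega \in \sigma(\gamma(t^\ast)) \iff \overline\omega\,\gamma(t^\ast) \in \overline{\Sp_1(2n,\C)},
\]
so that the crossing instants on the two sides correspond, and likewise (by a short computation) their local contributions.

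For part~(2) I first reduce to the Proposition by a change of reference Lagrangian. Since $\abs{\omega} = 1$, the block matrix $I_{2n} \oplus \overline\omega I_{2n}$ satisfies $(I_{2n} \oplus \overline\omega I_{2n})^\dagger\, \mathcal{J}\, (I_{2n} \oplus \overline\omega I_{2n}) = \mathcal{J}$, hence is a symplectomorphism of $(V,\{\cdot,\cdot\}_{\mathcal{J}})$; it carries $\Gr(\gamma(t))$ to $\Gr(\overline\omega\gamma(t))$ and $\Gr(\omega I_{2n})$ to $\Delta$. As $\iclm$ is invariant under such symplectomorphisms and $\igeomega(\gamma) = \igeo(\overline\omega\gamma)$ by the very definitions \eqref{eq:indiceomegageo} and \eqref{eq:indice1}, the Proposition yields
\[
	\igeomega(\gamma) = \igeo(\overline\omega\gamma) = \iclm\bigl(\Delta, \Gr(\overline\omega\gamma), [0,T]\bigr) = \iclm\bigl(\Gr(\omega I_{2n}), \Gr(\gamma), [0,T]\bigr).
\]
It then remains to identify $\igeomega(\gamma)$ with $i_\omega(\gamma)$ when $\omega \neq 1$. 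Both indices are built from the same family of crossing instants of $\overline\omega\gamma$ with $\overline{\Sp_1(2n,\C)}$ and differ only in how they are normalised at the ends — the first through the infinitesimal rotation $e^{-\varepsilon J}$, the second through the fixed reference path $\xi_n$ of \eqref{eq:xin}. For $\omega\neq1$ these normalisations agree: the path $\overline\omega\gamma$ starts at the non-degenerate matrix $\overline\omega I_{2n}$ (since $\overline\omega\neq1$), and the eigenvalues $2 - t/T$ and $(2 - t/T)^{-1}$ of $\xi_n(t)$ are positive reals, so $\xi_n$ produces no crossing with $\overline\omega\,\overline{\Sp_1(2n,\C)}$; by homotopy invariance with non-degenerate endpoints the two constructions give the same integer. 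Hence $i_\omega(\gamma) = \igeomega(\gamma) = \iclm(\Gr(\omega I_{2n}), \Gr(\gamma), [0,T])$, which is part~(2); the degenerate case $\gamma \notin \mathscr{P}^*_{T,\omega}(2n)$ follows from the lower-semicontinuity of $i_\omega$ and the corresponding continuity of $\iclm$.

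For part~(1) the same mechanism operates with $\omega = 1$, except that now a genuine, maximal crossing sits at $t = 0$: since $\gamma(0) = I_{2n}$ we have $\Gr(\gamma(0)) = \Delta$, so the Lagrangian path \emph{starts on} the reference Lagrangian. Resolving this endpoint — equivalently, comparing the closing of $\gamma$ by $\xi_n$, which ends at $\xi_n(T) = I_{2n}$ on the maximal stratum $\Sp_{2n}(2n,\C) \subset \overline{\Sp_1(2n,\C)}$, with the perturbation by $e^{-\varepsilon J}$ — produces a defect of exactly $n$, i.e.\ $i_1(\gamma) = \igeo(\gamma) - n$. Combined with the Proposition this gives
\[
	i_1(\gamma) + n = \igeo(\gamma) = \iclm\bigl(\Delta, \Gr(\gamma), [0,T]\bigr),
\]
which is part~(1). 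As a consistency check, the constant path $\gamma \equiv I_{2n}$ yields $\igeo(\gamma) = 0$, $\iclm(\Delta,\Gr(\gamma),[0,T]) = 0$ and $i_1(\gamma) = -n$, in agreement with the universal lower bound $i_1 \geq -n$.

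The step that genuinely requires care — and which is precisely the content of \cite[Lemma~4.6, Formulas~(2.15)--(2.16)]{MR2525637} — is this endpoint bookkeeping: checking that the perturbation $e^{-\varepsilon J}$, the reference path $\xi_n$, and the asymmetric convention of $\iclm$ ($+m^+$ at the left endpoint, $-m^-$ at the right) interlock so that the only surviving discrepancy is the additive constant $n$ in part~(1) and nothing in part~(2). The interior crossings never cause difficulty, since regular crossings are isolated on $[0,T]$ and each one contributes identically to all the indices involved; the whole question therefore localises to arbitrarily small neighbourhoods of $t = 0$ and $t = T$, where the three definitions can be compared directly.
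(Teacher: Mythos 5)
You should first be aware that the paper does not prove this lemma at all: it is quoted verbatim from \cite[Lemma~4.6, Formulas~(2.15)--(2.16)]{MR2525637}, so the only question is whether your argument stands on its own. Your soft reductions are fine: $I_{2n}\oplus\overline\omega I_{2n}$ is indeed a symplectomorphism of $(V,\{\cdot,\cdot\}_{\mathcal J})$ carrying $\Gr(\gamma)$ to $\Gr(\overline\omega\gamma)$ and $\Gr(\omega I_{2n})$ to $\Delta$, the symplectic invariance of $\iclm$ is legitimate (the paper itself invokes \cite[Property~V]{MR1263126}), and combining this with the displayed Proposition correctly gives $\igeomega(\gamma)=\iclm\bigl(\Gr(\omega I_{2n}),\Gr(\gamma),[0,T]\bigr)$. (Minor slip: the crossing condition is $\omega\in\sigma(\gamma(t^*))$, not $\overline\omega\in\sigma(\gamma(t^*))$.) But after these reductions the lemma \emph{is} the statement that the $e^{-\varepsilon J}$-normalisation, the $\xi_n$-normalisation and the asymmetric endpoint convention of $\iclm$ differ by exactly $n$ when $\omega=1$ and by nothing when $\omega\neq1$ --- and this you assert rather than prove, explicitly deferring it to the very reference the paper cites for the whole lemma. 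So at the one point where a proof was needed, nothing has been supplied.

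Moreover, the heuristics you offer for that step are not sound as stated. The shift by $n$ in part~(1) cannot be localised to ``resolving the maximal crossing at $t=0$'': the left-endpoint contribution $m^+\big(\Gamma(\Gr(\gamma),\Delta,0)\big)$ depends on $\gamma'(0)$ and is not universally $n$. For $n=1$ a small counterclockwise rotation has left-endpoint contribution $2$ and $i_1=1$, while a small clockwise rotation has left-endpoint contribution $0$ and $i_1=-1$; the difference $\iclm-i_1$ equals $1$ in both cases, but it is never visible at $t=0$ alone, so the ``defect of exactly $n$'' is a global bookkeeping identity between the three conventions, which is precisely what must be computed. Your consistency check is also based on a false claim: there is no universal bound $i_1\geq -n$. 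For example $\gamma(t)=e^{-2\pi tJ}$, $t\in[0,1]$, $n=1$, has only endpoint crossings with negative definite crossing forms, so $\iclm(\Delta,\Gr(\gamma),[0,1])=-2$ and hence $i_1(\gamma)=-3<-1$; backward-winding paths make $i_1$ arbitrarily negative. A correct completion would either reproduce the endpoint computation of \cite{MR2525637}, or argue that $\gamma\mapsto i_1(\gamma)+n$ and $\gamma\mapsto\iclm(\Delta,\Gr(\gamma),[0,T])$ obey the same homotopy, concatenation and $\diamond$-additivity properties and therefore differ by a constant, which one then evaluates on a single normalising path (e.g.\ a small rotation in $\Sp(2)$, as in Example~\ref{ex:Ralpha}); similarly for $\omega\neq1$, where one must also check that the right-endpoint convention of $\iclm$ matches the perturbative definition of $i_\omega$ in the degenerate case, since $\iclm$ is not continuous and only the perturbation in the co-orienting direction $e^{-\varepsilon J}$ leaves it unchanged. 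As it stands, the proposal reduces the lemma cleanly but leaves its quantitative content unproven.
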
 
	
	\begin{rmk}
		We observe that the integer $i_1$ is sometimes denoted by $i_{\text{CZ}}$ and it is called the \emph{Conley-Zehnder index}. For further details we refer to \cite{MR1762278} and references
		therein.
 	\end{rmk}
	 	
	We now show some examples of computation of $i_1(\gamma)$ passing through $\iclm$ of some paths of matrices in $\Sp(2) \subset \Sp(2,\C)$.
	Let $\gamma:[a,b] \to \Sp(2)$ be the path 
	 	\[
	 		\gamma(t) \= \begin{pmatrix}
	 					a(t) & b(t)\\ 
	 					c(t) & d(t)
	 				   \end{pmatrix},
	 	\]
	with $a,b,c,d \in \mathscr{C}^1([a,b],\R)$, let $l$ be the induced path of Lagrangian subspaces in $\R^4$ defined by $l(t) \= \Gr\big(\gamma(t)\big)$.
	Let us assume that $t^*$ is a crossing instant for $l$ such that $l(t^*)=\Delta$. 
	In order to compute the crossing form \eqref{eq:crossingform} we consider the Lagrangian subspace complementary to $\Delta$:
		\[
			W \= \{0\}\times \R \times \R \times \{0\}.
		\] 
	Thus the Lagrangian splitting $\R^4 = \Delta \oplus W$ holds and for any $v \= (x_0,y_0,x_0,y_0) \in \Delta$ let us choose $w(t) \= (0,\eta(t),\xi(t),0)\in W$ in order that $v + w(t) \in l(t)$. This
	means that $\eta(t)$ and $\xi(t)$ solve the equations
		\begin{equation}
 			x_0 + \xi(t) = a(t) x_0 + b(t)\big(y_0 + \eta(t)\big),
			\qquad
 			y_0 = c(t) x_0 + d(t)\big(y_0 + \eta(t)\big).
		\end{equation}
	Since in a crossing instant $t^*$ we have $\xi(t^*)=\eta(t^*)=0$, differentiating the above identities gives
	 	\begin{subequations}\label{eq:lederivate}
	 		\begin{align}
	 			\xi'(t^*) &= a'(t^*) x_0 + b'(t^*)y_0 - \frac{b(t^*)}{d(t^*)}\big[c'(t^*)x_0+d'(t^*)y_0\big],\\
	 			\eta'(t^*) &= - \frac{1}{d(t^*)}\big[c'(t^*)x_0+d'(t^*)y_0\big].
	 		\end{align}		
	 	\end{subequations}
	By a direct computation we obtain
		\[
	 	\begin{split}
	 		\{v,w(t)\}_{\mathcal J} & = \big(\mathcal J v, w(t) \big) \\
							& = -\left\langle J \begin{pmatrix}
									x_0\\ y_0
								\end{pmatrix},
								\begin{pmatrix}
									0\\ \eta(t)
								\end{pmatrix}\right\rangle
								+\left\langle J \begin{pmatrix}
									x_0\\y_0
								\end{pmatrix},  \begin{pmatrix}
	 								\xi(t)\\0
								\end{pmatrix}\right\rangle\\
							& = -x_0 \eta(t) - y_0 \xi(t).
		\end{split}
	 	\]
	Hence the crossing form at the crossing instant $t=t^*$ is given by
		\begin{equation}\label{eq:crossingrot}
			\Gamma(l,\Delta,t^*)(v) = \dfrac{d}{dt} \{v,w(t)\}_{\mathcal J}\Big\vert_{t=t^*} = -x_0 \eta'(t^*) - y_0 \xi'(t^*).
		\end{equation}
	 				
	\begin{ex} \label{ex:Ralpha}
		Let us consider the path $R_\alpha : [0,2\pi] \to \Sp(2)$ with
	 		\[
	 		R_\alpha(t)=
	 		\begin{pmatrix}
	 			\cos(\sqrt{2 - \alpha}\,t) & - \sqrt{2 - \alpha}\, \sin(\sqrt{2 - \alpha}\,t) \\
	 			\frac{1}{\sqrt{2 - \alpha}}\sin(\sqrt{2 - \alpha}\, t) & \cos(\sqrt{2 - \alpha}\,t)
	 		\end{pmatrix}
	 		\qquad \alpha \in (0,2),
	 		\]
	 	that means $a=0$, $b=2\pi$, and
	 		 \begin{gather*}
	 		 a(t) = d(t) = \cos(\sqrt{2 - \alpha}\,t)  \\
	 		 b(t) = - \sqrt{2 - \alpha} \sin(\sqrt{2 - \alpha}\,t), \qquad 
	 		 c(t) = \frac{1}{\sqrt{2 - \alpha}}\sin(\sqrt{2 - \alpha}\, t)
	 		 \end{gather*}
	 	For any value of the parameter $\alpha$, $t^*=0$ is a crossing instant and $a(0)=1$, $a'(0)=0$, $b(0)=0$, $b'(0)=-(2-\alpha)$, $c(0)=0$, $c'(0)=1$.
		Using Equations~\eqref{eq:lederivate} we get
	 		\begin{equation}\label{eq:crossingrotfin}
	 			\Gamma(l_\alpha,\Delta,0)[v] = -x_0 \eta'_\alpha(0)-y_0 \xi_\alpha'(0) = x_0^2+(2-\alpha)y_0^2
		 	\end{equation}
	 	where $l_\alpha$ is the path of Lagrangian subspaces associated with $R_\alpha$. Since $\Gamma(l_\alpha,\Delta,0)$ is a positive definite quadratic form, its signature is $2$. 
	 	Thus, according to Formula~\eqref{eq:CLMmaslovestremideg}, the contribution to $\iclm$ at the starting point of the path is $2$.
	 					
	 	In order to find out all the crossing instants, we observe that they are in one-to-one correspondence with the zeros of the function $\det\big(R_\alpha(t)-I_{2}\big)$, and hence with the
		solutions in $[0,2\pi]$ of the equation
	 		\begin{equation} \label{eq:crossingsrot}
	 			\cos(\sqrt{2-\alpha}\,t)=1,
	 		\end{equation}
	 	that we write as $t^\alpha_k \= {2k\pi}/\sqrt{2-\alpha}$, with $k \in \Z$. It is readily seen that 
	 	\begin{itemize}
	 	\item if $\alpha \in (1,2)$ then the only solution of \eqref{eq:crossingsrot} is $t^\alpha_0=0$, hence there are no other contributions to the computation of $\iclm$.
	 	\item if $\alpha =1$ then we have two solutions: $t^1_0=0$ and $t^1_1=2\pi$. We need to add $m^-\big(\Gamma(l_\alpha, \Delta, 2\pi)\big)$ to the contribution of the initial instant, but this
			quantity is actually $0$.
	 	\item if $\alpha \in (0,1)$ then \eqref{eq:crossingsrot} admits also the non-zero solution%
			\footnote{We observe that this value coincides with the apsidal angle for the $\alpha$-homogeneous potential.}
			$t^\alpha_1 = \dfrac{2\pi} {\sqrt{2-\alpha}}$. The contribution of this crossing is $\sgn \Gamma(l_\alpha, \Delta, t^\alpha_1) = 2$. 
	 	\end{itemize}
	 	Summing up all these computations we obtain
	 	\begin{equation}\label{eq:CLM_R_alpha}
	 		 \iclm(R_\alpha) =
	 		 \begin{cases}
	 		 2 & \text{if }\alpha \in [1,2)\\
	 		 4 & \text{if }\alpha \in [0,1).
	 		 \end{cases}	
	 	\end{equation}
	 	\end{ex} 
	 			
	 	\begin{ex}\label{ex:Nalpha} 
	 	We now consider the path $N_\alpha : [0,2\pi] \to \Sp(2)$ with
	 	\[
	 	N_\alpha (t) = \begin{pmatrix}
	 		 			1 & 0 \\
	 		 			f_\alpha(t) 
	 		 			& 1
	 		 			\end{pmatrix}
	 	\]
		where the function
	 	\[
	 	 f_\alpha(t):= \frac{1}{36 \pi^2} \biggl( \frac{4 \sin(\sqrt{2 - \alpha}\, t)}{(2 - \alpha)^{3/2}} - \frac{2 + \alpha}{2 - \alpha}t \biggr)
	 	\]
		is drawn in Figure~\ref{fig:alpha} for $\alpha = 1$ (the other cases for different $\alpha$'s are all similar).

	\begin{figure}[tb]
	\centering
	\subfloat[][The function $f_1(t)$ in the interval ${[0, 2\pi]}$.\label{fig:alpha}]{\includegraphics[width=0.45\textwidth]{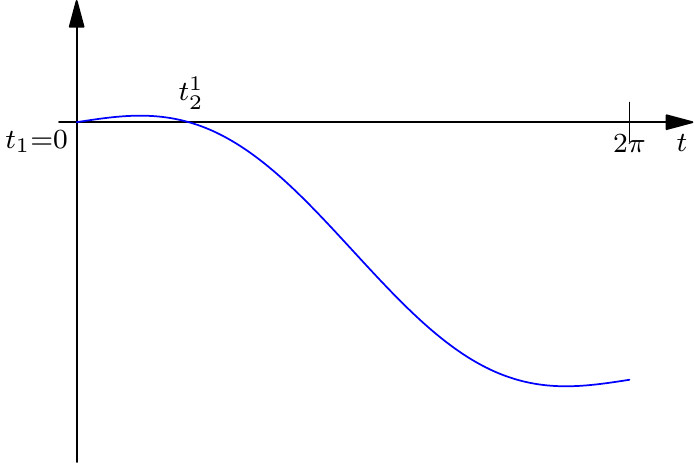}} \qquad
	\subfloat[][The function $2 - 2\cos\eps - f_1(t)\sin\eps$ in the interval ${[0, 2\pi]}$.\label{fig:falphadeformata}]{\includegraphics[width=0.45\textwidth]{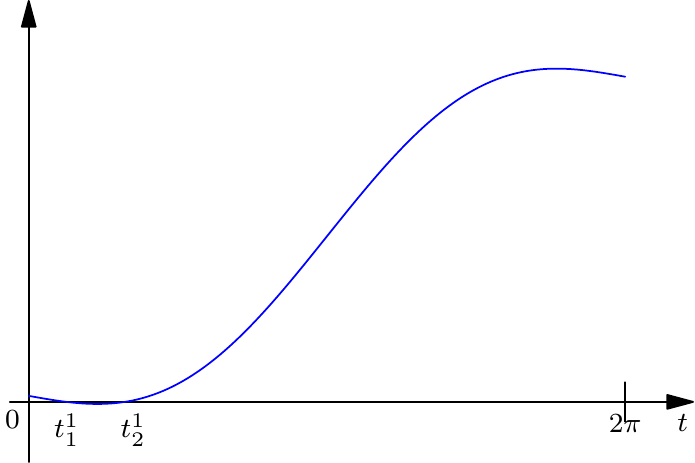}}
	\caption{The function $f_\alpha$ (a) and its deformation (b) for $\alpha = 1$.}
	\end{figure}

%	\begin{figure}[tb]
%	\centering
%		\begin{asy}
%		import graph;
%
%		size(200, 200*2/3, IgnoreAspect);
%
%		real falpha(real t) {return 1/(36*pi^2)*(4*sin(t) - 3*t);}
%		real x(real t) {return t;}
%
%		xaxis("$\scriptstyle t$", -0.2, 7, EndArrow);
%		yaxis(-0.07, 0.025, EndArrow);
%		draw(graph(x, falpha, 0, 2*pi, operator ..), blue);
%
%		label("$\scriptstyle t_1 = 0$", (0, 0), SW);
%		xtick(2*pi);
%		xtick(2*pi, S);
%		label("$\scriptstyle t_2^1$", (1.3, 0), N);
%		label("$\scriptstyle 2\pi$", (2*pi, 0), S);
%		\end{asy}
%		\caption{The function $f_1(t)$ in the interval $[0, 2\pi]$.} \label{fig:alpha}
%	\end{figure}
		
	 	We first observe that we are in a very degenerate situation, in the sense that $N_\alpha(t) \subset \overline{\Sp_1(2)}$.
	 	Furthermore, the function $f_\alpha$ admits two zeros in the interval $[0,2\pi]$, $t_1=0$ and $t_2^\alpha\in(0,2\pi)$. Thus the path is not contained in a fixed stratum of the Maslov cycle.
	 				
	 	However, by taking into account the very definition of the Maslov index in the degenerate case given in Definition~\ref{def:indicediMaslov}, we need to compute the contributions of the
		crossing of the graph of the perturbed matrix
	 	\begin{equation*}
	 		N_{\varepsilon,\alpha}(t):= N_\alpha(t)\, e^{-\eps J} \qquad \forall\, t\in [0,2\pi] 
	 		\end{equation*}
	 	and for $\varepsilon >0$ sufficiently small. By a direct computation we get:
	 		\begin{equation}\label{eq:nepsilonalpha}
	 		 N_{\varepsilon,\alpha}:= \begin{pmatrix}
	 			\cos\eps & \sin\eps \\
	 			-\sin\eps + f_\alpha(t) \cos\eps
	 			& \cos\eps + f_\alpha(t) \sin\eps
	 			\end{pmatrix}
	 		\end{equation}
	 	The crossing instants are the zeros of the equation: 
	 		\begin{equation}\label{eq:crossingnalpha}
	 		 2 - 2\cos\eps - f_\alpha(t)\sin\eps = 0.
	 		\end{equation}
		The function whose zeros we are searching is depicted in Figure~\ref{fig:falphadeformata}.
		
%	\begin{figure}[tb]
%		\centering
%		\begin{asy}
%		import graph;
%
%		size(200, 200*2/3, IgnoreAspect);
%
%		real eps=0.001;
%	
%		real falpha(real t) {return 1/(36*pi^2)*(4*sin(t) - 3*t);}
%		real g(real t) {return 2 - 2*cos(eps) - falpha(t)*sin(eps);}
%		real x(real t) {return t;}
%
%		xaxis("$\scriptstyle t$", -0.2, 7, EndArrow);
%		yaxis(-eps/100, eps/15, EndArrow);
%		draw(graph(x, g, 0, 2*pi, operator ..), blue);
%
%		label("$\scriptstyle 0$", (0, 0), SW);
%		label("$\scriptstyle t^1_1$", (0.4, 0), S);
%		label("$\scriptstyle t^1_2$", (1.1, 0), S);
%		xtick(2*pi);
%		xtick(2*pi, S);
%		label("$\scriptstyle 2\pi$", (2*pi, 0), S);
%		\end{asy}
%		\caption{The function $2 - 2\cos\eps - f_1(t)\sin\eps$ in the interval $[0, 2\pi]$.} \label{fig:falphadeformata}
%	\end{figure}	
	
	 	It is easy to see that for $\varepsilon$ sufficiently small and for any $\alpha \in (0,2)$ this equation 
	 	admits two distinct solutions $t^\alpha_1$ and $t^\alpha_2$ in $(0, 2\pi)$.
	 				
	 	Denoting by $t^\alpha$ a generic solution (crossing) we easily compute
	 		\begin{align*}
	 			\eta'_\alpha(t^\alpha) &= - f'_\alpha(t^\alpha) x_0 \\
	 			\xi'_\alpha(t^\alpha) &= 0,
	 		\end{align*}
	 	whence
	 		\[
	 		\Gamma(N_{\eps, \alpha}, \Delta, t^\alpha) = f'_\alpha(t^\alpha) x_0^2.
	 		\]
		Summing up the two contributions, from the monotonicity of $f_\alpha$ we immediately obtain that $\iclm(N_\alpha, \Delta, [0, 2\pi]) = 0$. The path $N_\alpha$ and its deformation
		$N_{\eps, \alpha}$ are represented in Figure~\ref{fig:Nalpha}.
	 	\end{ex}
		
	\begin{figure}
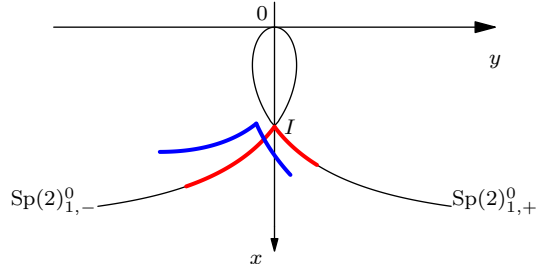

		\begin{center}
		\begin{asy}
			import graph;
		
			size(200, 100, IgnoreAspect);
			
			real angle=-0.25;
			
		//	Curva di destra
			real x1(real r) {return r*sqrt(1 - 4*r^2/(1 + r^2)^2);}
			real y1(real r) {return -2*r^2/(1 + r^2);}
		
		//	Curva di sinistra
			real x2(real r) {return -r*sqrt(1 - 4*r^2/(1 + r^2)^2);}
			real y2(real r) {return -2*r^2/(1 + r^2);}					
		
		//	Curva di destra ruotata
			real x3(real r) {return cos(angle)*x1(r) - sin(angle)*y1(r);}
			real y3(real r) {return sin(angle)*x1(r) + cos(angle)*y1(r);}
			
		//	Curva di destra ruotata
			real x4(real r) {return cos(angle)*x2(r) - sin(angle)*y2(r);}
			real y4(real r) {return sin(angle)*x2(r) + cos(angle)*y2(r);}
		
		//	Assi coordinati
			xaxis(xmin = -3, xmax = 3, arrow=EndArrow);
			yaxis(ymin = -2, ymax=0.25);

			draw(graph(x1, y1, 0, 3, operator ..));
			draw(graph(x2, y2, 0, 3, operator ..));
			draw(graph(x1, y1, 1, 1.5, operator ..), red+linewidth(1.5));
			draw(graph(x2, y2, 1, 2, operator ..), red+linewidth(1.5));
			draw(graph(x3, y3, 1, 1.5, operator ..), blue+linewidth(1.5));
			draw(graph(x4, y4, 1, 2, operator ..), blue+linewidth(1.5));
			draw((0,-2)--(0,-2.25), arrow = EndArrow);
		
		//	Etichette
			labelx("$\scriptstyle y$", 3);
			label("$\scriptstyle 0$", (0, 0), NW);
			label("$\scriptstyle I$", (0, -1), E);
			labely("$\scriptstyle x$", -2.25);
			label("$\scriptstyle\mathrm{Sp}(2)_{1,+}^0$", (3,-1.75));
			label("$\scriptstyle\mathrm{Sp}(2)_{1,-}^0$", (-3,-1.75));
		\end{asy}
		\caption{The path $N_\alpha$ (in red) and its deformation $N_{\eps, \alpha}$ (in blue). The first path starts at the identity, then goes downwards right, then comes back to the
			identity and finally bends downwards left. The second follows the same trajectory, just rotated clockwise by an angle $\eps$. See Appendix~\ref{app:SP2} for more details about the
			coordinates and the underlying curves.} \label{fig:Nalpha}
		\end{center}	 		
	\end{figure}

	%	-------------------------------------------------------------------------------------
		\subsection{Computation of the Maslov index via Krein signature and splitting numbers}
	%	-------------------------------------------------------------------------------------
	In the case of autonomous Hamiltonian systems and under the assumption of non-degeneracy it is possible, at least theoretically, to compute the Maslov index (see for instance
	\cite{MR1824111} and references therein). Let $M \in \Sp(2n, \R)$ act on $\C^{2n}$ in the usual way:
		\[
			M(\xi + i\eta) \= M\xi + i M\eta, \qquad \forall\, \xi,\eta \in \R^{2n}
		\]
	and consider the Hermitian form $g$ on $\C^{2n}$ defined as
		\[
			g(v,w) \= ( iJ v, w ).
		\]
	
	\begin{defn} \label{def:kreinsign}
		Let $\lambda \in \U$ be an eigenvalue of a complex symplectic matrix. The \emph{Krein signature} of $\lambda$ is the signature of the restriction of the Hermitian
		form $g$ to the generalised eigenspace associated with $\lambda$. If $g$ is positive definite on this subspace then $\lambda$ is said to be \emph{Krein-positive}.
	\end{defn}
	
	The next result will be useful in the following.
	
	\begin{prop}[{\cite[Theorem~1.5.1]{MR1824111}}] \label{prop:MaslovAbbo}
	Let $B$ be a real symmetric matrix. Let $i \theta_1, \dotsc, i\theta_k$ be the Krein-positive purely imaginary eigenvalues of $JB$, counted with their algebraic multiplicity.
	Then the linear autonomous Hamiltonian system
		\[
		\zeta'(t) = JB \zeta(t)
		\]
	is non-degenerate at time $T$ if and only if $\theta_j T \notin 2\pi\Z$, for any $j = 1, \dotsc, k$. If $\psi$ denotes the fundamental solution, we get:
		\[
		i_1(\psi) = - \sum_{j=1}^k \bigg[\bigg[ \frac{T \theta_j}{\pi} \bigg]\bigg]
		\]
	provided that it is non-degenerate at time $T$. The function $[[\, \cdot\, ]]$ is defined as follows:
		\[
		  [[ \theta ]] \= 
		  \begin{cases}
			\theta & \text{if } \theta \in \Z \\
			\text{the closest odd integer} & \text{if } \theta \in \R \setminus \Z.
		\end{cases}
		\]
	\end{prop}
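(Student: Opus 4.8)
The plan is to use the fact that the system is autonomous, so that the fundamental solution is the matrix exponential $\psi(t)=e^{tJB}$ and the monodromy matrix is $\psi(T)=e^{tJB}|_{t=T}=e^{TJB}$. First I would establish the non-degeneracy criterion. Since $B$ is symmetric, $JB$ is infinitesimally symplectic and its spectrum is symmetric about both axes of $\C$; by the spectral mapping theorem the eigenvalues of $\psi(T)$ are the numbers $e^{T\mu}$ with $\mu\in\sigma(JB)$, and $1$ occurs among them exactly when $JB$ has a purely imaginary eigenvalue $i\theta$ with $T\theta\in 2\pi\Z$. A purely imaginary eigenvalue of definite Krein signature is semisimple and comes paired with its Krein-opposite $-i\theta$, so the imaginary part of $\sigma(JB)$ is $\{\pm i\theta_1,\dots,\pm i\theta_k\}$ and $\psi(T)$ is non-degenerate precisely when $T\theta_j\notin 2\pi\Z$ for every $j$. (Were some imaginary eigenvalue Krein-indefinite it could be pushed off $\U$ by an arbitrarily small symplectic perturbation of $B$ without changing $i_1$, hence would not contribute; I would therefore work under the standing assumption, met in all our applications, that all imaginary eigenvalues are Krein-definite.)

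For the index, the idea is to put $JB$ into Krein-adapted symplectic normal form: $\R^{2n}$ splits as a symplectically orthogonal direct sum $\R^{2n}=V_0\oplus P_1\oplus\dots\oplus P_k$ of $JB$-invariant subspaces, where each $P_j$ is the two-dimensional (by semisimplicity) real subspace carrying the pair $\{\pm i\theta_j\}$ and on which $JB|_{P_j}$ is symplectically conjugate to $-\theta_j J_2$ — the normal form singled out by the fact that $-\theta J_2$ has $i\theta$ as its Krein-positive eigenvalue — whereas $JB|_{V_0}$ has no purely imaginary eigenvalue. Then, after conjugating by a fixed symplectic matrix, $\psi$ becomes a diamond product $\psi_0\diamond\psi_1\diamond\dots\diamond\psi_k$ with $\psi_j(t)=e^{-t\theta_j J_2}$ on $P_j$; by the $\diamond$-additivity and homotopy invariance of $i_1$ (the former a consequence of the $\diamond$-additivity of $i_\omega$ and of Lemma~\ref{thm:chiave}), I would reduce the problem to computing each $i_1(\psi_j)$ separately.

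On $V_0$ every sub-flow has monodromy with spectrum off the unit circle, so $\det(\psi_0(t)-I)\neq 0$ for all $t>0$ and the path $\Gr(\psi_0)$ meets the diagonal only at $t=0$; Lemma~\ref{thm:chiave}(1) together with the crossing-form formula~\eqref{eq:CLMmaslovestremideg} then gives $i_1(\psi_0)=m^+\bigl(\Gamma(\Gr(\psi_0),\Delta,0)\bigr)-\tfrac12\dim_\R V_0$, and a direct inspection of the hyperbolic $2\times2$ and loxodromic $4\times4$ real blocks of $JB|_{V_0}$ (diagonalising first by a small homotopy if needed) shows that this crossing form is split, so that $i_1(\psi_0)=0$. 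On each $P_j$ the path $\psi_j(t)=e^{-t\theta_j J_2}$ is a (possibly time-rescaled) rotation, precisely of the type treated in Example~\ref{ex:Ralpha}: its crossings with $\Delta$ sit at $t=2\pi m/\lvert\theta_j\rvert$ for $m=0,1,\dots,\lfloor\lvert\theta_j\rvert T/2\pi\rfloor$, and the crossing form at each of them is definite of sign $-\sgn\theta_j$. Feeding this into~\eqref{eq:CLMmaslovestremideg} and Lemma~\ref{thm:chiave}(1), and treating the borderline case $T\theta_j\in\pi\Z$ (where $\psi_j(T)=\pm I$) separately but identically, yields $i_1(\psi_j)=-[[T\theta_j/\pi]]$. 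Summing over $j$ gives $i_1(\psi)=-\sum_{j=1}^{k}[[T\theta_j/\pi]]$.

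The main obstacle, as I see it, is twofold and concerns signs. First, one must pin down the Krein-adapted normal form carefully — in particular that a \emph{positive} Krein signature for $i\theta_j$ forces the orientation $-\theta_j J_2$ of the block, hence the sign of the crossing form, hence the minus sign in front of $[[\,\cdot\,]]$ — and then reconcile this with the various sign and endpoint conventions fixed for $\igeo$, $i_1$ and $\iclm$ in Section~\ref{sec:symplecticpreliminaries}. Second, one must verify that $B$ restricted to the sum of the non-imaginary generalised eigenspaces carries a split quadratic form, so that the $V_0$-term genuinely vanishes. Both are classical facts, but the bookkeeping is exactly where sign errors creep in, so this is the part that needs the most care.
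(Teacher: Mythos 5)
There is no in-paper proof to compare your argument with: Proposition~\ref{prop:MaslovAbbo} is imported verbatim from Abbondandolo's book (the reference \cite{MR1824111}, Theorem~1.5.1) and used as a black box. Judged on its own terms, your sketch is essentially the standard proof, and the sign bookkeeping you were worried about does come out right under this paper's conventions: for the block $-\theta J_2$ the eigenvalue $i\theta$ is indeed the Krein-positive one (the eigenvector $\trasp{(1,i)}$ satisfies $g(v,v)>0$); at a crossing instant where $e^{tJB}=I$ the crossing form of $\Gr(e^{tJB})$ with $\Delta$ is $v\mapsto\langle Bv,v\rangle$, which for that block equals $-\theta\abs{v}^2$, and feeding this into \eqref{eq:CLMmaslovestremideg} and Lemma~\ref{thm:chiave} gives $i_1=-[[T\theta/\pi]]$ per block, consistently with Example~\ref{ex:Ralpha}. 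Your second worry, the $V_0$-term, can be settled more cleanly than by inspecting blocks: $V_0$ is the direct sum of the two $JB$-invariant isotropic subspaces $V_\pm$ spanned by the generalised eigenspaces with positive, respectively negative, real part, and since the crossing form at $t=0$ is $\omega(v,JBv)=\langle Bv,v\rangle$ it vanishes identically on each of $V_\pm$; being non-degenerate on $V_0$ it is therefore split, so $m^+=\tfrac12\dim V_0$ and $i_1(\psi_0)=0$ by Lemma~\ref{thm:chiave}.

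The one genuine flaw is the parenthetical dismissal of Krein-indefinite imaginary eigenvalues. If $g$ has signature $(p,q)$ with $p\neq q$ on the generalised eigenspace of $i\theta$, a small symplectic perturbation cannot push all of it off the imaginary axis: only groups of eigenvalues with vanishing total Krein signature can leave, so at least $\abs{p-q}$ eigenvalues of definite sign survive near $i\theta$ and do contribute to $i_1$; ``hence would not contribute'' is false in general. As written, your argument therefore proves the formula only under your standing assumption that every imaginary eigenvalue is Krein-definite (hence semisimple); note that this assumption is also what makes your ``if and only if'' correct, since it excludes $0\in\sigma(JB)$ (the zero eigenvalue is never Krein-definite, its generalised eigenspace being symplectic). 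This restricted version covers every use made of the proposition in the paper — in $\mathit{LS}$ off the curve \eqref{eq:degeneratecurve} the four eigenvalues of $\Lambda_3$ are simple and purely imaginary — but it is weaker than the statement as quoted. To get the general case you should either count each imaginary eigenvalue with multiplicity equal to the positive index of inertia of $g$ on its generalised eigenspace and rerun your perturbation argument keeping track of the eigenvalues that remain on the axis, or bypass perturbation altogether and argue through the splitting numbers of Proposition~\ref{thm:splittingprop}.
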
	
     	
	Now, following \cite{MR1898560}, we  recall the definition of the so-called \emph{splitting numbers} as well as their basic properties, which will be crucial later. For this we refer to
	\cite[Chapter~6, pages~190--199]{MR1898560}.
	
	\begin{defn} \label{def:splittingnumbers}
		For any $M \in \Sp(2n)$ and every $\omega \in \U$, the \emph{splitting numbers} $S^\pm_M(\omega)$ of $M$ at $\omega$ are defined by
			\begin{equation} \label{eq:splittingnumbers}
				S_M^\pm(\omega) \= \lim_{\eps \to 0^+} i_{\omega \exp (\pm i \eps)}(\gamma) - i_\omega(\gamma),
			\end{equation}
		where $\gamma \in \mathscr{P}_T(2n)$ is such that $\gamma(T) = M$.
	\end{defn}
	
	In the next proposition we recall the basic properties of the splitting numbers. For their computation we introduce 
	the \emph{normal forms}
	\begin{equation*} % \label{eq:normalforms}
			R(\theta) \= \begin{pmatrix}
						\cos \theta & -\sin \theta \\
						\sin \theta & \cos \theta
						\end{pmatrix}, \; 
					\theta \in (0,2\pi)\setminus \{0\},
							\qquad
			N_1(\lambda, a) \= \begin{pmatrix}
							\lambda & a \\
							0 & \lambda
							\end{pmatrix}, \;
					\lambda \in \R^*, \, a \in \R.
	\end{equation*}

	\begin{prop}[{\cite[Chapter~6]{MR1898560}}] \label{thm:splittingprop}
		For $M, M_0, M_1 \in \Sp(2n)$ and all $\omega \in \U$, $\theta \in (0, \pi)$, the following properties hold:
			\begin{enumerate}
				\item\label{1} The splitting numbers $S^\pm_M(\omega)$ are well defined, \ie they are independent of the choice of the path $\gamma \in \P_\tau(2n)$ satisfying
							$\gamma(\tau) = M$ in Definition~\eqref{eq:splittingnumbers}.
				\item\label{2} The splitting numbers $S^\pm_M(\omega)$ are constant in the set $\Omega^0(M)$, that is the path-connected component containing $M$ of the set
					\[
						\Omega(M) \= \Set{ N \in \Sp(2n) | \sigma(N) \cap \U = \sigma(M) \cap \U \textup{ and } \nu_\lambda(N) = \nu_\lambda(M)\ \forall\, \lambda \in \sigma(M) \cap \U }.
					\]
				\item\label{3} $S^\pm_M(\omega) = 0$ if $\omega \notin \sigma(M)$.
				\item\label{4} $S^\pm_M(\overline{\omega}) = S^\mp_M(\omega)$.
				\item\label{5} $0 \leq S^\pm(\omega) \leq \dim\ker(M - \omega I)$.
				\item\label{6} $S^+_M(\omega) + S^-_M(\omega) \leq \dim\ker(M - \omega I)^{2n}$ if $\omega \in \sigma(M)$.
				\item\label{7} $S^\pm_{M_0 \diamond M_1}(\omega) = S^\pm_{M_0}(\omega) + S^\pm_{M_1}(\omega)$.
				\item\label{formula} $i_\omega(\gamma) - i_1(\gamma) = S_{M}^+(1) + \sum_{\omega_0} \bigl( S_{M}^+(\omega_0) - S_{M}^-(\omega_0) \bigr) - S_{M}^-(\omega)$, where
							$\Im(\omega) \geq 0$ and $\omega_0 \in \sigma(M)$ lies in the interior of the arc of the upper unit semicircle connecting $1$ and $\omega$.
				\item\label{8} $\Bigl( S^+_{N_1(1,a)}(1),\ S^-_{N_1(1,a)}(1) \Bigr) = \begin{cases}
																	(1,1) & \text{if } a \in \{ 0, 1 \} \\
																	(0,0) & \text{if } a = -1.
																\end{cases}$
				\item\label{9} $\Bigl( S^+_{N_1(-1,a)}(-1),\ S^-_{N_1(-1,a)}(-1) \Bigr) = \begin{cases}
																		(1,1) & \text{if } a \in \{ -1, 0 \} \\
																		(0,0) & \text{if } a = 1.
																	\end{cases}$
				\item\label{10} $\Bigl( S^+_{R(\theta)}(e^{i\theta}),\ S^-_{R(\theta)}(e^{i\theta}) \Bigr) = (0,1)$.
				\item\label{11} $\Bigl( S^+_{R(2\pi - \theta)}(e^{i\theta}),\ S^-_{R(2\pi - \theta)}(e^{i\theta})\Bigr) = (1,0)$.
		\end{enumerate}
	\end{prop}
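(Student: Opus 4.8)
The plan is to recover all eleven items along the lines of \cite[Chapter~6]{MR1898560}, reducing them to the three structural features of the $\omega$-index already recorded above --- path additivity, $\diamond$-additivity and homotopy invariance --- together with the description of how $i_\omega$ jumps at the endpoint, furnished by the crossing form of $\Gr(\gamma)$ against $\Gr(\omega I_{2n})$ (Lemma~\ref{thm:chiave}).

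I would first establish item~1 (and, by the same device, item~2), which carries the real content. Given $\gamma_0,\gamma_1\in\mathscr{P}_T(2n)$ with the common endpoint $M$, for $\eps\neq 0$ small the numbers $\omega e^{\pm i\eps}$ are not eigenvalues of $M$, so both paths have $(\omega e^{\pm i\eps})$-regular endpoints; by homotopy invariance, each of $i_{\omega e^{i\eps}}(\gamma_1)-i_{\omega e^{i\eps}}(\gamma_0)$, $i_{\omega e^{-i\eps}}(\gamma_1)-i_{\omega e^{-i\eps}}(\gamma_0)$ and $i_\omega(\gamma_1)-i_\omega(\gamma_0)$ is a single integer attached to the class of the loop $\gamma_1\ast\gamma_0^{-1}$ in $\pi_1\big(\Sp(2n,\C)\big)\cong\Z$, independent of the unit-modulus parameter entering the index. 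Subtracting, $S_M^\pm(\omega)$ is seen not to depend on the chosen path. The same ``loop contribution is parameter-independent'' principle yields item~2: if $N\in\Omega^0(M)$, join $M$ to $N$ by a path $\delta$ inside $\Omega(M)$ and concatenate $\gamma$ with $\delta$; along $\delta$ the unit-circle spectrum and the corresponding multiplicities are frozen, so for small $\eps\neq 0$ the path $\delta$ never meets the cycles attached to $\omega e^{\pm i\eps}$, whence $i_{\omega e^{\pm i\eps}}(\gamma\ast\delta)-i_\omega(\gamma\ast\delta)=i_{\omega e^{\pm i\eps}}(\gamma)-i_\omega(\gamma)$ and therefore $S_N^\pm(\omega)=S_M^\pm(\omega)$.

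The remaining general properties are then formal. Item~3 holds because $\omega\notin\sigma(M)$ forces $\omega e^{\pm i\eps}\notin\sigma(M)$ for small $\eps$, so $i_\omega$ is locally constant at $\omega$ and the jump vanishes. Item~4 follows from the conjugation symmetry $i_{\bar\nu}(\bar\gamma)=i_\nu(\gamma)$, which interchanges the two sides of $\omega$. Item~7 is a one-line consequence of $\diamond$-additivity of $i_\nu$ applied to $\nu\in\{\omega,\ \omega e^{i\eps},\ \omega e^{-i\eps}\}$ and then subtracting. Items~5 and~6 come from the crossing-form picture: the jump of $i_\nu\big(\Gr(\gamma)\big)$ as $\nu$ crosses $\omega$ is governed by a quadratic form supported on $\Gr(\gamma(T))\cap\Gr(\omega I_{2n})\cong\ker(M-\omega I_{2n})$, so each one-sided contribution is bounded by $\dim\ker(M-\omega I_{2n})$ and the two-sided sum by the dimension of the generalised eigenspace $\dim\ker(M-\omega I_{2n})^{2n}$.

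Finally, for the explicit values in items~8--11 I would run, for each of $N_1(\pm 1,a)$ and $R(\theta),R(2\pi-\theta)$ with $\theta\in(0,\pi)$, a computation entirely analogous to Examples~\ref{ex:Ralpha} and~\ref{ex:Nalpha}: pick a convenient path in $\mathscr{P}_T(2)$ ending at the matrix in question (for instance $t\mapsto R(\theta t/T)$, or a path of shear matrices for the $N_1$'s), compute $i_\omega$ and $i_{\omega e^{\pm i\eps}}$ at the relevant $\omega$ ($\pm 1$ or $e^{i\theta}$) via the crossing form of the associated Lagrangian path, and read off $(S^+,S^-)$ from the difference. The asymmetry $(0,1)$ for $R(\theta)$ versus $(1,0)$ for $R(2\pi-\theta)=R(-\theta)$ mirrors the flip of the Krein signature of the eigenvalue $e^{i\theta}$, while the patterns $(1,1)$ versus $(0,0)$ for the shears encode on which side of the singular cycle $\overline{\Sp_1(2,\C)}$ the matrix sits, which one checks directly from the geometry of $\Sp(2)$ collected in Appendix~\ref{app:SP2}. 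The one genuinely non-formal ingredient --- and the step I expect to cost the most care --- is the parameter-independence of the loop contribution used in items~1--2: that is exactly where the homotopy-theoretic substance of \cite[Chapter~6]{MR1898560} enters, everything else being either manipulation of the axioms or a finite computation in $\Sp(2)$.
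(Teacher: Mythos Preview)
The paper does not prove this proposition at all: it is stated with a bare citation to \cite[Chapter~6]{MR1898560} and no argument is supplied. There is therefore nothing in the paper to compare your proposal against; the authors treat all twelve items as known background imported from Long's book.

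That said, your sketch is a reasonable outline of how the results in Long's Chapter~6 are organised, with one omission worth flagging. You cover items~1--7 and then pass to ``the explicit values in items~8--11'', but the item labelled \ref{formula} in the enumeration (the eighth item displayed) is not an explicit splitting-number computation: it is the key identity
\[
i_\omega(\gamma)-i_1(\gamma)=S_M^+(1)+\sum_{\omega_0}\bigl(S_M^+(\omega_0)-S_M^-(\omega_0)\bigr)-S_M^-(\omega),
\]
which is precisely the tool the paper uses repeatedly (see Equation~\eqref{eq:33HS} and the computations in Subsection~\ref{subsec:omegaE3}). Your proposal never addresses it. The proof of this formula is not a crossing-form computation for a single normal form but a telescoping argument: one partitions the arc from $1$ to $\omega$ at the eigenvalues of $M$ on $\U$, uses that $\nu\mapsto i_\nu(\gamma)$ is constant on each open sub-arc (item~3), and records the one-sided jumps at each eigenvalue via the definition of $S^\pm$. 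This is straightforward once items~1 and~3 are in hand, but it should be stated separately from the normal-form calculations.

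A minor point: there are twelve enumerated items, not eleven.
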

	
%
%	-----------------------------------------
	\section{Variational setting: an index theorem} \label{sec:variational}
%	-----------------------------------------
	
	%\red
	{
	We recall here some basic facts about the Lagrangian and Hamiltonian dynamics (for further details see 
	for instance \cite{Fat08, MR2300670, MR2465553}). The elements of the tangent bundle $T\R^n \cong \R^n\times \R^n$ are denoted by $(q,v)$ where $q \in U$ and $v \in T_q U$.
	Let $\L \in \mathscr C^\infty(T\R^n; \R)$ be a \emph{regular Lagrangian}, meaning that $\L$ is assumed to satisfy
	\begin{enumerate}[(L1)]
		\item $\partial_{vv} \L(q,v)>0$ for all $(q,v) \in T\R^n$;
		\item There is a constant $l_1>0$ such that 
				\[
					\norm{\partial_{vv}\L(q,v)} \leq l_1, \quad 
					\norm{\partial_{qv}\L(q,v)} \leq l_1(1+ \abs{v}), \quad \norm{\partial_{qq}\L(q,v)} \leq l_1(1+ \abs{v}^2).
				\]
	\end{enumerate}
	As a direct consequence of the Inverse Function Theorem, under Condition~(L1) the Legendre transformation
		\[
			\mathcal{L}_\L: T\R^n \to T^*\R^n, \qquad (q,v)\mapsto \big(D_v\L(q,v), q\big),
		\]
	is a smooth local diffeomorphism. The Fenchel transformation of $\L$ is the autonomous Hamiltonian on $T^*\R^n$ 
		\[
			\H(p,q) \= \max_{v \in T_q \R^n} \big(p[v]-\L(q,v)\big) = p\big[v(p,q)\big]- \L\big(q,v(p,q)\big),
		\]
	where $\big(q,v(p,q)\big)=\mathcal{L}^{-1}_\L(p,q)$. Under the above assumptions on $\L$, the function $\H$ is smooth on $T^*\R^n$. 
	The associated autonomous Hamiltonian vector field $X_\H$ on $T^*\R^n$, defined by 
		\[
			\big\langle J X_\H(p,q), \xi \big\rangle = -D\H(p,q)[\xi], \quad \forall\, (p,q) \in T^*\R^n,\ \forall\, \xi \in 
			T_{(p,q)} T^*\R^n,
		\]
	is then smooth, so it defines an autonomous smooth local flow on $T^*\R^n$. The corresponding flow on $T\R^n$ 
	obtained by conjugating the Hamiltonian flow
	$\varphi^\H$ by the Legendre transform $\mathcal{L}_\L$ is denoted by 
		\[
			 \varphi^\L: T\R^n \to T\R^n
		\]
	and its orbits have the form $t \mapsto \big(\gamma(t), \gamma'(t) \big)$, where 
	$\gamma $ solves the Euler-Lagrange equation
		\begin{equation}\label{def:EL}
			\dfrac{d}{dt}\partial_v \L\big(\gamma(t), \gamma'(t)\big)= \partial_q \L\big( \gamma(t),\gamma'(t) \big).
		\end{equation}
	}
	
	Let us consider the \emph{Lagrangian action functional} $\mathbb A : W^{1,2}(\R/2\pi\Z, \widehat{X}) \to \R$  defined by 
		\[
			\mathbb A(\gamma) \= \int_0^{2\pi}  \L\big(\gamma(t),\gamma'(t)\big)\,dt.
		\]
	We recall that if $\L$ satisfies (L2) then $\mathbb A$ is of class $\mathscr{C}^2$ (cf.~\cite[Proposition 4.1]{MR2300670}). Moreover if the first variation of $\mathbb A$ vanishes at
	$\gamma \in  W^{1,2}(\R/2\pi\Z, \widehat{X})$ for every $\xi \in W^{1,2}(\R/2\pi\Z, \widehat{X})$, then $\gamma$ is a (classical) solution of class $\mathscr C^2$ of the Euler-Lagrange
	equation~\eqref{def:EL} such that $\gamma(2\pi)=\gamma(0)$. Given a classical solution $\gamma$ of \eqref{def:EL} the second variation of $\mathbb A$ is given by
		\begin{equation}\label{eq:secondvariation}
			\d^2 \mathbb A(\gamma)[\xi,\eta] = \int_0^{2\pi} \Big[ \big( P(t)\xi'+Q(t)\xi \big)\eta' + \trasp{Q}(t)\xi'\eta + R(t) \xi \eta \Big] \,dt,
		\end{equation}
	where
		\[
			P(t) \= D_{vv} \L\big(\gamma(t),\gamma'(t)\big), \quad 
			Q(t) \= D_{qv} \L\big(\gamma(t), \gamma'(t)\big), \quad
			R(t) \= D_{qq} \L\big(\gamma(t), \gamma'(t)\big).
		\]
	Linearising the Euler-Lagrange equations~\eqref{def:EL} around a critical point $\gamma$ we obtain the Sturm system
		\begin{equation}\label{eq:Sturm}
			-\big( P(t)\gamma'(t) + Q(t)\gamma(t) \big)' + \trasp{Q}(t)\gamma'(t)+ R(t)\gamma(t) = 0
		\end{equation}
	Let now $\zeta(t) \= \big(D_v\L(\gamma(t), \gamma'(t),\gamma(t)\big)$ be the solution of the Hamiltonian system associated with \eqref{eq:Sturm}, whose fundamental solution $\phi$ satisfies
		\begin{equation} \label{eq:lin_sys}
			\begin{cases}
				\phi'(t) = JB(t)\phi(t)\\
				\phi(0)=I_{2n},
			\end{cases}
		\end{equation}
	with 
		\[
			B(t) \= \begin{pmatrix}
					P^{-1}(t) & -P^{-1}(t) Q(t)\\
					-\trasp{Q}(t) P^{-1}(t) & \trasp{Q}(t)P^{-1}(t) Q(t)-R(t)
				 \end{pmatrix}.
		\]
	For any $\omega \in \U$ let $h(\gamma)$ be the quadratic form on $D(\omega) \= \Set{\xi \in W^{1,2}\big([0,2\pi], \C^n\big) | \xi(2\pi)=\omega\xi(0)}$ induced by $\d^2 \mathbb A(\gamma)$.
	Then it is possible to show (arguing as in \cite[Proposition 3.1]{MR2133393} for further details) that $h(\gamma)$ is an essentially positive Fredholm quadratic form in the sense specified in
	Appendix~\ref{sec:Fredholmforms}.
		
	\begin{defn}
		Let  $\gamma \in D(\omega) $ be a critical point of $\mathbb A$. We define the \emph{$\omega$-Morse index}  
		of $\gamma$, denoted by $\iMor^\omega(\gamma)$, as the dimension of the largest subspace of $D(\omega)$
		such that the quadratic form $h(\gamma) $
		is negative definite.
	\end{defn}
	We observe that the $\omega$-Morse index is the number of negative eigendirections counted
	according to their multiplicities on which $h(\gamma)$ is negative definite. We also define 
		\[
			n_\omega (\gamma) \=\dim \ker h(\gamma).
		\]
	
	The following Morse-type index theorem relates the Morse index of a solution with the 
	$\omega$\nobreakdash-index introduced in Subsection~\ref{subsec:omega_Maslov}. 
	
	\begin{lem}[Morse Index Theorem, {\cite[page~172]{MR1898560}}] \label{thm:indextheorem}
		Let $\gamma$ be a critical point of the Lagrangian action functional (hence a classical solution 
		of the Euler-Lagrange equation~\eqref{def:EL}) and let $\phi$ be the fundamental solution
		of the linearised system around $\gamma$ (that is, $\phi$ satisfies \eqref{eq:lin_sys}). Then
			\[
				\iMor^\omega(\gamma) = i_\omega(\phi),	\qquad	n_\omega(\gamma) = \nu_\omega(\gamma),  
				\qquad \forall\, \omega \in \U.
			\]
	\end{lem}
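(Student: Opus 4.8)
The plan is to prove the two assertions separately: first the kernel identity $n_\omega(\gamma)=\nu_\omega(\gamma)$, which is essentially a regularity statement, and then the index identity $\iMor^\omega(\gamma)=i_\omega(\phi)$, for which I would run a homotopy (spectral-flow) argument reducing to an explicitly solvable model system. From the formula \eqref{eq:secondvariation} for the second variation, a loop $\xi\in D(\omega)$ lies in $\ker h(\gamma)$ if and only if it is a weak solution of the Sturm system \eqref{eq:Sturm} together with the natural boundary condition produced by integration by parts; since $P(t)=D_{vv}\L>0$, elliptic regularity upgrades $\xi$ to a classical $\mathscr{C}^2$ solution satisfying $\xi(2\pi)=\omega\xi(0)$ and the conjugate-momentum condition at the endpoints. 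Passing to the Hamiltonian side via the Legendre transform, the associated $\zeta(t)=\bigl(D_v\L(\gamma(t),\gamma'(t)),\gamma(t)\bigr)$ solves $\zeta'=JB(t)\zeta$ with $\zeta(2\pi)=\omega\zeta(0)$, i.e.\ $\zeta(0)\in\ker_\C(\phi(2\pi)-\omega I_{2n})$; this correspondence is a linear isomorphism, so $n_\omega(\gamma)=\dim\ker h(\gamma)=\dim_\C\ker_\C(\phi(2\pi)-\omega I_{2n})=\nu_\omega(\gamma)$.

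\textbf{Index identity via homotopy.} I would realise $h(\gamma)$ by a bounded self-adjoint Fredholm operator $\mathcal{A}_\omega$ on $L^2([0,2\pi],\C^n)$ (legitimate because $h(\gamma)$ is an essentially positive Fredholm quadratic form, cf.\ Appendix~\ref{sec:Fredholmforms} and \cite[Proposition~3.1]{MR2133393}), so that $\iMor^\omega(\gamma)$ equals the total multiplicity of the negative spectrum of $\mathcal{A}_\omega$. Then connect the coefficients $(P,Q,R)$ — equivalently the path $B(\cdot)$ — through a continuous family $B_s(\cdot)$, $s\in[0,1]$, with $B_1=B$ and $B_0$ a constant matrix of the form $P\equiv I_n$, $Q\equiv0$, $R$ diagonal, keeping each form essentially positive Fredholm along the way. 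Along the homotopy $s\mapsto\iMor^\omega(h_s)$ is locally constant except when an eigenvalue of $\mathcal{A}_{\omega,s}$ crosses $0$, the jump being controlled by the restriction of $\partial_s h_s$ to the crossing eigenspace; on the symplectic side $s\mapsto i_\omega(\phi_s)$ is locally constant except when $\omega\in\sigma(\phi_s(2\pi))$, and by Lemma~\ref{thm:chiave} together with the crossing-form expression \eqref{eq:CLMmaslovestremideg} its jumps are governed by the crossing form of the Lagrangian path $t\mapsto\Gr(\phi_s(t))$ relative to $\Gr(\omega I_{2n})$. The heart of the matter is to identify these two crossing data: applying the kernel identity to the perturbed systems shows that an eigenvalue crossing of $\mathcal{A}_{\omega,s}$ at $0$ occurs exactly when $\Gr(\phi_s)$ meets $\Gr(\omega I_{2n})$, and the two governing quadratic forms agree up to sign under this identification — this is the ``spectral flow equals Maslov index'' principle of Cappell, Lee and Miller \cite{MR1263126} and Long and Zhu \cite{MR1762278}, with the normalisation already fixed by Lemma~\ref{thm:chiave}. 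Consequently $s\mapsto\iMor^\omega(h_s)-i_\omega(\phi_s)$ is constant, and it remains only to evaluate it at $s=0$.

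\textbf{Base case.} For the constant reference system $\phi_0(t)=e^{tJB_0}$ is a symplectic sum of $2\times2$ blocks, so $i_\omega(\phi_0)$ follows from the normalisation of the $\omega$-index together with Proposition~\ref{prop:MaslovAbbo} and the splitting numbers of Proposition~\ref{thm:splittingprop}; meanwhile $h_0(\gamma)$ has constant coefficients, and expanding $\xi$ in a Fourier series adapted to the boundary condition $\xi(2\pi)=\omega\xi(0)$ diagonalises $h_0$ into a direct sum of Hermitian $n\times n$ forms, one per mode, whose negative inertia is elementary to count. Matching the two tallies mode by mode against the block decomposition of $\phi_0$ closes the argument.

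\textbf{Main obstacle.} The delicate point is the bookkeeping of endpoint contributions along the homotopy: the $\omega$-index carries boundary terms (the concatenation with $\xi_n$ in \eqref{eq:indiceomega}, equivalently the $m^+$ at the left endpoint and $-m^-$ at the right endpoint in \eqref{eq:CLMmaslovestremideg}), and one must check that these are reproduced exactly by the convention for counting the eigenvalues of $\mathcal{A}_{\omega,s}$ as they pass through $0$, in particular that no spectral flow is created or lost at $s=0$ or $s=1$ where the endpoints may be degenerate. Making this rigorous requires the full Fredholm framework of Appendix~\ref{sec:Fredholmforms} — in practice a finite-dimensional (Galerkin) reduction — which simultaneously guarantees that $\iMor^\omega(\gamma)$ is finite and that the spectral flow of $\{\mathcal{A}_{\omega,s}\}$ is well defined and additive. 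This is precisely the content of \cite[page~172]{MR1898560}.
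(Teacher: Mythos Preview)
The paper does not give its own proof of this lemma: it is stated as a citation from Long's monograph \cite[page~172]{MR1898560} and used as a black box thereafter. So there is no in-paper argument to compare against.

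Your outline is essentially the standard route taken in the literature (Long's book, and the spectral-flow treatments by Robbin--Salamon and Abbondandolo): identify the kernel of the second variation with the $\omega$-eigenspace of the monodromy via regularity and the Legendre transform, then run a homotopy to a constant-coefficient model and match the jumps of the Morse index with those of the Maslov-type index through the crossing-form / spectral-flow equality, finally checking the normalisation on the model. The sketch is sound in spirit and correctly flags the genuine difficulty, namely the endpoint conventions when the homotopy passes through degenerate instants.

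One notational slip: in your kernel paragraph you write $\zeta(t)=\bigl(D_v\L(\gamma(t),\gamma'(t)),\gamma(t)\bigr)$, which is the Legendre image of the \emph{critical point} $\gamma$ itself, not of the variation $\xi$. What you need is the linearised Legendre map applied to $\xi$, i.e.\ $\zeta_\xi(t)=\bigl(P(t)\xi'(t)+Q(t)\xi(t),\,\xi(t)\bigr)$; it is this $\zeta_\xi$ that solves $\zeta'=JB(t)\zeta$ and satisfies $\zeta_\xi(2\pi)=\omega\,\zeta_\xi(0)$. With that correction the bijection $\ker h(\gamma)\cong\ker_\C\bigl(\phi(2\pi)-\omega I_{2n}\bigr)$ goes through as you intend.
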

	
	%\red
	{
	We close this section by recalling two important results about the minimising properties of the circular periodic solutions of the $\alpha$-homogeneous Kepler problem and the circular
	Lagrangian solution of the three-body problem under $\alpha$-homogeneous potential.
	The first one is due to Gordon (cf.~\cite{MR0502484}) for the case $\alpha=1$ and was generalised to different homogeneity degrees by Venturelli in \cite[Proposition~2.2.3]{Venturelli:phd}.
	\begin{lem} \label{lem:venturelli_kepler}
		In the $\alpha$-homogeneous Kepler problem with $\alpha \in [1,2)$, circular solutions are local minimisers of the Lagrangian action functional in the space of loops with winding number
		$\pm 1$ around the origin.
	\end{lem}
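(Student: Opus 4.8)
The plan is to adapt Gordon's reduction to the $\alpha$-homogeneous setting. Since we argue locally around the circular solution $\gamma_{\alpha,0}$, which has winding number $\pm 1$ about the origin, and since the winding number is a locally constant function on $W^{1,2}(\R/2\pi\Z,\R^2\setminus\{0\})$ (by the Sobolev embedding into $\mathscr C^0$), it suffices to work inside a small $W^{1,2}$-neighbourhood of $\gamma_{\alpha,0}$ inside the winding class $+1$ (the class $-1$ being handled by the reflection $t\mapsto -t$). Such loops are collision-free and admit polar coordinates $\gamma(t)=\rho(t)\,e^{i\theta(t)}$, with $\rho\in W^{1,2}(\R/2\pi\Z,(0,\infty))$ close to the constant $\rho_0$ — where $\rho_0^{\alpha+2}=\alpha$ is the radius of $\gamma_{\alpha,0}$, read off from $\ddot q=-\alpha\abs{q}^{-\alpha-2}q$ — and with $\theta(2\pi)=\theta(0)+2\pi$. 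First I would eliminate the angular variable: in polar coordinates $\mathbb{A}(\gamma)=\int_0^{2\pi}\bigl[\tfrac12(\dot\rho^2+\rho^2\dot\theta^2)+\rho^{-\alpha}\bigr]\,dt$, and the Cauchy--Schwarz inequality
\[
  (2\pi)^2=\Bigl(\int_0^{2\pi}\dot\theta\,dt\Bigr)^2\leq\int_0^{2\pi}\rho^2\dot\theta^2\,dt\cdot\int_0^{2\pi}\rho^{-2}\,dt
\]
gives $\mathbb{A}(\gamma)\geq G(\rho)$, where
\[
  G(\rho)\=\int_0^{2\pi}\Bigl(\tfrac12\dot\rho^2+\rho^{-\alpha}\Bigr)\,dt+\frac{2\pi^2}{\int_0^{2\pi}\rho^{-2}\,dt},
\]
with equality if and only if $\rho^2\dot\theta$ is constant — which holds along $\gamma_{\alpha,0}$, so that $\mathbb{A}(\gamma_{\alpha,0})=G(\rho_0)$. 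The problem is thus reduced to showing that $\rho_0$ is a local minimiser of the one-dimensional functional $G$ on positive $2\pi$-periodic Sobolev functions.

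Next I would analyse $G$ near $\rho_0$. On a small $W^{1,2}$-ball around $\rho_0$ the function $G$ is smooth, the integrands being smooth functions of $\rho$ as long as $\rho$ stays bounded away from $0$ (here $W^{1,2}\hookrightarrow\mathscr C^0$). A direct computation gives $dG(\rho_0)=0$ exactly when $\rho_0^{\alpha+2}=\alpha$, and
\[
  d^2G(\rho_0)[h,h]=\int_0^{2\pi}\dot h^2\,dt+(\alpha-2)\int_0^{2\pi}h^2\,dt+\frac{2}{\pi}\Bigl(\int_0^{2\pi}h\,dt\Bigr)^2 .
\]
Diagonalising through the Fourier expansion $h=\sum_{k\in\Z}c_k\,e^{ikt}$, the $k$-th mode contributes — up to the positive factor $2\pi$ — the coefficient $k^2+\alpha-2$ for $k\neq 0$ and $\alpha+2$ for $k=0$. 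For $\alpha\in(1,2)$ all coefficients are strictly positive and, in fact, $k^2+\alpha-2\geq\tfrac{\alpha-1}{2}(1+k^2)$ for every $k\neq 0$, so $d^2G(\rho_0)$ is coercive: $d^2G(\rho_0)[h,h]\geq\tfrac{\alpha-1}{2}\norm{h}_{W^{1,2}}^2$. A Taylor expansion of $G$ at $\rho_0$ then yields $G(\rho)>G(\rho_0)$ for every $\rho\neq\rho_0$ in a neighbourhood, hence $\mathbb{A}(\gamma)\geq G(\rho)>G(\rho_0)=\mathbb{A}(\gamma_{\alpha,0})$ for $\gamma$ close to $\gamma_{\alpha,0}$: the circular solution is a local minimiser when $\alpha\in(1,2)$.

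There remains the borderline case $\alpha=1$, the original setting of Gordon's theorem, where the modes $\abs{k}=1$ give a vanishing coefficient, so $d^2G(\rho_0)$ is only positive semidefinite with kernel $Z\=\gen\{\cos t,\sin t\}$. The key point is that $Z$ is the tangent space at $\rho_0$ of the two-parameter family $\Sigma$ — parametrised by the eccentricity vector — of radial profiles of the $2\pi$-periodic Keplerian ellipses, and that along $\Sigma$ one has $\mathbb{A}\equiv G\equiv\mathbb{A}(\gamma_{\alpha,0})=3\pi$, since each Keplerian ellipse conserves angular momentum and hence realises equality in the Cauchy--Schwarz step above. Each $\sigma\in\Sigma$ is a critical point of $G$ (by the envelope theorem, the corresponding ellipse being a critical point of $\mathbb{A}$) with positive-definite Hessian transverse to $\Sigma$ by continuity from $\rho_0$; splitting $\rho=\sigma+\zeta$ with $\zeta$ transverse to $\Sigma$ and running a Morse-lemma-with-parameters (equivalently, Lyapunov--Schmidt) argument then gives $G(\rho)\geq G(\sigma)=G(\rho_0)$ near $\rho_0$. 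Alternatively, for $\alpha=1$ one simply invokes \cite{MR0502484} and for $\alpha\in(1,2)$ one invokes \cite[Proposition~2.2.3]{Venturelli:phd}. I expect the $\alpha=1$ degeneracy to be the main obstacle — for $\alpha\in(1,2)$ everything follows softly from the coercive second variation — together with the routine but necessary checks that the polar change of coordinates and the winding constraint are well behaved on a $W^{1,2}$-neighbourhood of $\gamma_{\alpha,0}$ and that the inequality $\mathbb{A}\geq G$ genuinely transfers local minimality.
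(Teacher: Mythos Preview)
The paper does not actually prove this lemma: it is stated as a recalled result from the literature, attributed to Gordon \cite{MR0502484} for $\alpha=1$ and to Venturelli \cite[Proposition~2.2.3]{Venturelli:phd} for the $\alpha$-homogeneous extension. There is no argument given in the paper beyond the citations.

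Your proposal, by contrast, supplies an actual proof along the lines of Gordon's original argument. The reduction via polar coordinates and Cauchy--Schwarz to the one-variable functional $G(\rho)$ is correct, as is the second-variation computation: using $\rho_0^{\alpha+2}=\alpha$ one indeed finds $d^2G(\rho_0)[h,h]=\int\dot h^2+(\alpha-2)\int h^2+\tfrac{2}{\pi}(\int h)^2$, and the Fourier diagonalisation gives the mode coefficients $k^2+\alpha-2$ for $k\neq 0$ and $\alpha+2$ for $k=0$, whence coercivity for $\alpha\in(1,2)$ and strict local minimality. For $\alpha=1$ the kernel $\gen\{\cos t,\sin t\}$ is correctly identified as tangent to the Kepler ellipse family on which the action is constant; the Lyapunov--Schmidt/Morse-lemma-with-parameters step you sketch is the standard way to conclude, though it would need to be written out in full for a self-contained proof. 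In short: your argument is sound and is precisely the approach behind the references the paper cites, so there is nothing to correct --- you have simply done more than the paper itself does here.
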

	As regards the circular Lagrangian solution for the $\alpha$-homogeneous $3$-body problem (without any restriction on the choice of the masses), from \cite[Theorem~3.1.17]{Venturelli:phd}
	we infer the following result.
	\begin{lem} \label{lem:venturelli_3body}
		In the $\alpha$-homogeneous $3$-body problem the circular Lagrange relative equilibrium is a local minimum of the Lagrangian action functional when $\alpha \in [1,2)$ (it is actually a
		strict minimiser if $\alpha \neq 1$). It is a non-degenerate saddle point when $\alpha \in (0,1)$.  
	\end{lem}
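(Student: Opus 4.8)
The plan is to obtain the statement directly from Venturelli's thesis, where it is established (for arbitrary masses) as \cite[Theorem~3.1.17]{Venturelli:phd}; the only genuine work is to match his framework with ours. Concretely, I would first verify that the periodic solution whose variational character is described there is exactly the circular Lagrange relative equilibrium $\gamma_{\alpha,\beta}$ built in Section~\ref{sec:description} --- i.e.\ the central configuration is the equilateral triangle, rotating with the angular velocity $\sqrt{\lambda_\alpha}$ prescribed by \eqref{eq:lambdahom} --- and that the functional he uses coincides, after the standard rescaling of the period and of $\lambda_\alpha$, with our action functional $\mathbb A$ on $W^{1,2}(\R/2\pi\Z,\widehat X)$. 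Once this dictionary is in place, the three assertions (local minimum for $\alpha\in[1,2)$, strict minimum for $\alpha\in(1,2)$, non-degenerate saddle for $\alpha\in(0,1)$) are literally his, and nothing further is required.

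If one instead wants an argument internal to this paper, the statement can be read off the linearised system. By the Morse Index Theorem (Lemma~\ref{thm:indextheorem}) one has $\iMor(\gamma_{\alpha,\beta})=i_1(\phi)$ and $n_1(\gamma_{\alpha,\beta})=\nu_1(\phi)$, where $\phi$ is the constant-coefficient fundamental solution of $\xi'=\Lambda\xi$ in the rotating and rescaled coordinates of Proposition~\ref{prop:Lambdahom}. One then splits $\Lambda$ along $T^*\widehat X=E_2\oplus E_3$. The $E_2$-block is conjugate to the linearisation of the generalised Kepler problem, whose monodromy has spectrum $\{1,1,e^{\pm 2\pi i\sqrt{2-\alpha}}\}$; its contribution to $i_1$ vanishes for $\alpha\in[1,2)$ by Lemma~\ref{lem:venturelli_kepler} and is strictly positive for $\alpha\in(0,1)$, as one sees from the crossing-form computations of Examples~\ref{ex:Ralpha} and~\ref{ex:Nalpha}. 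For the $E_3$-block one feeds the explicit matrix \eqref{eq:partesuE3}, whose spectral data depend only on $(\alpha,\beta)$, into the normal-form and splitting-number calculus recalled in Section~\ref{sec:symplecticpreliminaries}, and checks that it contributes $0$ to $i_1$ for every $\alpha\in[1,2)$ and every admissible $\beta\in(0,9]$. Hence $i_1(\phi)=0$ on that range, so $\gamma_{\alpha,\beta}$ is a local minimiser, and, as discussed below, it is strict for $\alpha\neq1$.

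The genuinely delicate point, in either approach, is the word \emph{non-degenerate} for $\alpha\in(0,1)$ and, dually, the failure of strictness at $\alpha=1$. A relative equilibrium always carries a kernel coming from invariance under rotations, dilations and time translation; this is exactly what the blocks $E_1$ and (partly) $E_2$ encode. So ``non-degenerate saddle'' must be read modulo these symmetries, i.e.\ as $\nu_1=0$ for the restriction of $\phi$ to the genuinely dynamical block $E_3$ (together with the expected minimal degeneracy on $E_2$). Establishing this amounts to showing $1\notin\sigma(\phi|_{E_3})$ for all $\alpha\in(0,1)$ and all $\beta\in(0,9]$, which by \eqref{eq:partesuE3} reduces to a transcendental inequality in $(\alpha,\beta)$; I expect this to be the main obstacle, and it is precisely the kind of computation that the geometry of $\Sp(2)$ and the splitting numbers of Proposition~\ref{thm:splittingprop} are designed to handle. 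At $\alpha=1$, by contrast, the extra eigenvalue collision on $E_2$ --- the classical degeneracy of the gravitational case --- produces a non-trivial kernel, which is why $\gamma_{1,\beta}$ is only a non-strict minimiser, consistently with $\iMor=0$ and $n_1>0$.
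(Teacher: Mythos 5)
The paper offers no proof of this lemma at all: it is simply imported from Venturelli's thesis via the citation \cite[Theorem~3.1.17]{Venturelli:phd}, so your first paragraph --- matching his setting (equilateral central configuration rotating with angular velocity $\sqrt{\lambda_\alpha}$, rescaled period and action) and then quoting his statement --- is exactly the paper's approach. Your supplementary ``internal'' derivation is not required, and be aware that its non-degeneracy step as you phrase it (showing $1\notin\sigma(\phi|_{E_3})$ for all $\alpha\in(0,1)$ and all $\beta\in(0,9]$) would actually fail on the curve $\beta=\frac{36(1-\alpha)}{(\alpha+2)^2}$, where the paper later shows the monodromy on $E_3$ does acquire the eigenvalue $1$; so the citation route is the correct one here.
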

	}

%	---------------------------------------------------------------------------------------
	\section{Linear and spectral stability of the Lagrangian solution} \label{sec:linearstability}
%	---------------------------------------------------------------------------------------
	
	Recall that in Section~\ref{subsec:decomp} we established that there exists a system of symplectic coordinates such that the linearised system restricted to $E_2 \oplus E_3$ is represented
	in the standard basis of $\R^8$ by the matrix $\Lambda$ defined in~\eqref{eq:Lambda}. Note now that $\Lambda$ can be expressed as the diamond product
	$\Lambda_2 \diamond \Lambda_3$ of two matrices $\Lambda_2$ and $\Lambda_3$ defined by
		\begin{equation}\label{eq:Lambda_23}
			\Lambda_2 \= \begin{pmatrix}
						0 & 1 & \alpha + 1 & 0 \\
						-1 & 0 & 0 & -1 \\
						1 & 0 & 0 & 1 \\
						0 & 1 & -1 & 0
					\end{pmatrix},
					\qquad
			\Lambda_3 \= \begin{pmatrix}
						0 & 1 & \frac{1}{2} \left( \alpha + \frac{\alpha+2}{3} \sqrt{\smash[b]{9 - \beta}} \right) & 0 \\
						-1 & 0 & 0 & \frac{1}{2} \left( \alpha - \frac{\alpha+2}{3} \sqrt{\smash[b]{9 - \beta}} \right) \\
						1 & 0 & 0 & 1 \\
						0 & 1 & -1 & 0
					\end{pmatrix},
		\end{equation}
	the range of $\alpha$ now being $[0, 2)$ (cf.~Remark~\ref{rmk:alpha=0}).
	The former encodes the dynamics on the symplectic invariant subspace $E_2$, whereas the latter 
	governs the motion on $E_3$. 
	
	System~\eqref{eq:sysR8} thus decouples into two linear autonomous Hamiltonian subsystems on $E_2$ and $E_3$ respectively, and it follows that its fundamental solution
	$\Phi \in \P_{2\pi}(8)$ can be written as the diamond product of the fundamental solutions 
	$\phi_2 \in \P_{2\pi}(4)$ and $\phi_3 \in \P_{2\pi}(4)$ of these subsystems.
	
	\begin{rmk}
		By the discussion given in Section \ref{sec:description} (see also \cite[page~271]{MR2145251} for the gravitational case) the Hamiltonian system on the invariant subspace $E_2$
		is equivalent to the generalised $\alpha$-homogeneous and logarithmic Kepler problem.
		It is worthwhile noting that the matrix $\Lambda_3$ coincides with $\Lambda_2$ when%
			\footnote{Technically speaking we ruled out the possibility that the parameter $\beta$ could be equal to $0$ for two reasons. The first is that at some point of the derivation of 
				the matrix of the linearised system we divided by $\beta$ (cf.~Section~\ref{sec:description}); the second is due to the fact that if $\beta = 0$ then two masses would vanish
				and therefore there would be no dynamics at all. However we consider the limit $\beta \to 0$ and the extension by continuity.}
		$\beta = 0$: in this case then the essential part of the fundamental solution of the Lagrangian circular orbit coincides with the fundamental solution of the Kepler orbit.
	\end{rmk}

	\begin{defn}
		The linear autonomous Hamiltonian system~\eqref{eq:sysR8} is \emph{spectrally stable} if the spectrum $\sigma(\Lambda)$ of $\Lambda$ is contained in the imaginary axis $i\R$;
		we call it \emph{linearly stable} if in addition the matrix $\Lambda$ is diagonalisable. We say that System~\eqref{eq:sysR8} is \emph{degenerate} if $\ker \Lambda \neq \{ 0 \}$.
	\end{defn}
	
	Note that the spectrum $\sigma(\Lambda)$ of $\Lambda$ is the union $\sigma(\Lambda_2) \cup \sigma(\Lambda_3)$ of the spectra of $\Lambda_2$ and $\Lambda_3$ respectively. The
	eigenvalues of $\Lambda_2$ are
		\[
			0,\ 0,\ \pm i\sqrt{2 - \alpha};
		\]
	hence the system is always degenerate for every $n \geq 3$. It is then natural, following Moeckel in \cite{Moeckel:notes}, to adopt the following terminology.
	
	\begin{rmk}\label{rmk:alpha=2}
		We observe that when $\alpha = 2$ the spectrum of $\Lambda_2$ reduces to $\{0\}$, while for $\alpha > 2$ such matrix admits also two non-zero real eigenvalues.
		More precisely, when $\alpha = 2$ the two non-zero purely imaginary eigenvalues of $\Lambda_2$ collapse into the origin (this corresponds to a Krein collision in $1$ for the eigenvalues
		of the monodromy matrix) and split into a pair of non-zero real eigenvalues when $\alpha > 2$. The value $\alpha =2$ is then the threshold of linear stability on $E_2$.
	\end{rmk}

	\begin{defn}
		A relative equilibrium is \emph{non-degenerate} if the remaining $4$ eigenvalues (relative to $\Lambda_3$) are different from $0$; we say that it is \emph{spectrally stable} if these
		eigenvalues are purely imaginary and \emph{linearly stable} if, in addition to this condition of spectral stability, $\Lambda_3$ is diagonalisable.
	\end{defn}
	
	The eigenvalues of the Hamiltonian matrix $\Lambda_3$ are
		\begin{align*}
				\lambda_1^\pm & \= \pm \frac{1}{6} i \sqrt{36 - 18\alpha + 6 \sqrt{9(\alpha - 2)^2 - \beta(\alpha + 2)^2}}  \\
				\lambda_2^\pm & \= \pm \frac{1}{6} i \sqrt{36 - 18\alpha - 6 \sqrt{9(\alpha - 2)^2 - \beta(\alpha + 2)^2}}
		\end{align*}
	and their direct study leads to a picture of the zones of stability and instability in the parameter space (see Figure~\vref{fig:stab}).
		
	\begin{prop} \label{thm:stabilita}
		The rectangle $(0,9] \times [0,2)$ is divided into three regions, depending on the stability of the relative equilibrium determined by the parameters $\alpha$ and $\beta$:
			\begin{enumerate}
				\item Region of \emph{linear stability}
						\[
							\mathit{LS} \= \Set{ (\beta, \alpha) \in (0,9] \times [0,2) | \beta < 9 \left( \frac{\alpha - 2}{\alpha + 2} \right)^2 };
						\]
				\item Curve of \emph{spectral (but not linear) stability}
						\[
							\mathit{SS} \= \Set{ (\beta, \alpha) \in (0,9] \times [0,2) | \beta = 9 \left( \frac{\alpha - 2}{\alpha + 2} \right)^2 };
						\]
				\item Region of \emph{spectral instability}
						\[
							\mathit{SI} \= \Set{ (\beta, \alpha) \in (0,9] \times [0,2) | \beta > 9 \left( \frac{\alpha - 2}{\alpha + 2} \right)^2 }.
						\]
			\end{enumerate}
	\end{prop}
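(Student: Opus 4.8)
The plan is to reduce everything to a study of the four eigenvalues of $\Lambda_3$, since by the definitions preceding the statement the non\nobreakdash-degeneracy and the spectral/linear stability of the relative equilibrium are determined by $\Lambda_3$ alone (the block $\Lambda_2$ carries the always\nobreakdash-degenerate Keplerian directions and has already been dealt with). Because $\Lambda_3$ is infinitesimally symplectic, its characteristic polynomial is even; a direct computation (or the eigenvalue formulas listed above) gives $p_{\Lambda_3}(\lambda) = \lambda^4 + (2-\alpha)\lambda^2 + \tfrac{\beta(\alpha+2)^2}{36}$. Setting $\mu = \lambda^2$ reduces the problem to the quadratic $\mu^2 + (2-\alpha)\mu + \tfrac{\beta(\alpha+2)^2}{36} = 0$, whose roots $\mu_\pm$ satisfy $\mu_+ + \mu_- = -(2-\alpha) < 0$ and $\mu_+\mu_- = \tfrac{\beta(\alpha+2)^2}{36} > 0$, and whose discriminant equals $\tfrac19\bigl(9(\alpha-2)^2 - \beta(\alpha+2)^2\bigr)$. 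Hence everything is governed by the sign of $D(\alpha,\beta) \= 9(\alpha-2)^2 - \beta(\alpha+2)^2$, and $D > 0$, $D = 0$, $D < 0$ are exactly $\beta < 9\bigl(\tfrac{\alpha-2}{\alpha+2}\bigr)^2$, $\beta = 9\bigl(\tfrac{\alpha-2}{\alpha+2}\bigr)^2$, $\beta > 9\bigl(\tfrac{\alpha-2}{\alpha+2}\bigr)^2$.

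Then I would split into the three cases. If $D > 0$, the roots $\mu_\pm$ are real and distinct, and having negative sum and positive product they are both negative; thus $\Lambda_3$ has four distinct purely imaginary eigenvalues $\pm\sqrt{\mu_+}$, $\pm\sqrt{\mu_-}$, so it is diagonalisable and spectrally stable, i.e.\ linearly stable ($\mathit{LS}$). If $D < 0$, then $\mu_\pm$ form a genuinely complex\nobreakdash-conjugate pair with real part $-\tfrac{2-\alpha}{2} < 0$ and nonzero imaginary part; since the square roots of a non\nobreakdash-real complex number have nonzero real part, $\Lambda_3$ then has an eigenvalue off the imaginary axis (in fact a complex quadruple $\pm a \pm ib$), so the relative equilibrium is spectrally unstable ($\mathit{SI}$).

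The delicate case is $D = 0$, i.e.\ the curve $\mathit{SS}$: here $\mu_+ = \mu_- = -\tfrac{2-\alpha}{2} < 0$, so $\Lambda_3$ has the purely imaginary eigenvalues $\pm i\sqrt{(2-\alpha)/2}$, each of algebraic multiplicity $2$, and spectral stability holds; the point is to show that linear stability fails, i.e.\ that $\Lambda_3$ is not diagonalisable. For this I would compute $\Lambda_3^2$ explicitly and observe that its entry in position $(4,1)$ equals $-2$, regardless of $\alpha$ and $\beta$; hence $\Lambda_3^2$ is never a scalar matrix, so the minimal polynomial of $\Lambda_3$ cannot equal $\lambda^2 + \tfrac{2-\alpha}{2}$, and $\Lambda_3$ is not diagonalisable. (Equivalently, as $\beta$ crosses $9\bigl(\tfrac{\alpha-2}{\alpha+2}\bigr)^2$ the two eigenvalue pairs $\lambda_1^\pm$ and $\lambda_2^\pm$ undergo a Krein collision producing a nontrivial Jordan block.) Assembling the three cases yields the claimed partition of $(0,9]\times[0,2)$ into $\mathit{LS}$, $\mathit{SS}$ and $\mathit{SI}$.

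The main obstacle is precisely this borderline case $D = 0$: one must genuinely exclude diagonalisability there, and the only robust route I see is the explicit evaluation of $\Lambda_3^2$, whereas the rest is routine sign analysis of a quadratic. A secondary point worth stating is that the logarithmic case $\alpha = 0$ is covered uniformly: $\Lambda_3$ is obtained from the $\alpha$\nobreakdash-homogeneous expression by setting $\alpha = 0$ (Remark~\ref{rmk:alpha=0}), so all of the above remains valid on the closed strip $\alpha \in [0,2)$.
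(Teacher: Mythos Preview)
Your argument is correct and follows the same route as the paper: a direct analysis of the eigenvalues of $\Lambda_3$ via the biquadratic characteristic polynomial, with the three regions determined by the sign of the discriminant $D(\alpha,\beta)=9(\alpha-2)^2-\beta(\alpha+2)^2$. The paper's proof merely asserts that on $\mathit{SS}$ the colliding eigenvalue pairs ``give rise to two Jordan blocks''; your explicit check that the $(4,1)$ entry of $\Lambda_3^2$ equals $-2$ (hence $\Lambda_3^2+\tfrac{2-\alpha}{2}I\neq 0$ and the minimal polynomial cannot have simple roots) is a clean and self-contained way to establish this, and is a welcome addition of detail rather than a different approach.
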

	
	\begin{proof}
		A direct computation shows that the eigenvalues of $\Lambda_3$ are purely imaginary in $\mathit{LS} \cup \mathit{SS}$; however on the stability curve $\mathit{SS}$ they collide and
		form two pairs of purely imaginary eigenvalues which give rise to two Jordan blocks, so that diagonalisability is lost. In the region $\mathit{SI}$ their real part is different from $0$.
	\end{proof}

	\begin{rmk}
		Let us observe that as $\beta$ is arbitrarily small (which corresponds to the presence of a dominant mass) and $\alpha$ is bounded away from 2 we lie in the region of linear stability.
		Such a result agrees with Moeckel's conjecture on the dominant mass, according to which relative equilibria with a dominant mass are linearly stable.
	\end{rmk}

%	---------------------------------------------------------------------------
	\section{Maslov index of the generalised Kepler problem}
%	---------------------------------------------------------------------------	
	
	The aim of this section is to compute the $\omega$-index of the restriction of the Hamiltonian system~\eqref{eq:sysR8} to the invariant subspace $E_2$ of the phase space. As already
	observed, the Hamiltonian function on this subspace coincides with the Hamiltonian of the generalised (\ie $\alpha$-homogeneous and logarithmic) Kepler problem.

	\subsection{Computation of the Maslov index}\label{subs:MaslovE2}
	Consider the linear autonomous Hamiltonian initial value problem
	\begin{equation}\label{eq:IVP1}
			\begin{cases}
				\dot{\phi}_2(\tau) = \Lambda_2 \phi_2(\tau) \\
				\phi_2(0) = I_4.
			\end{cases}
		\end{equation}
	Here $\phi_2$ is the restriction to $E_2$ of the fundamental solution $\Phi$ of the Lagrangian circular orbit.

	\begin{prop}\label{thm:morsekepler}
		The Maslov index of the fundamental solution $\phi_2$ of System~\eqref{eq:IVP1} is
			\[
				i_1 (\phi_2) = \begin{cases}
							0 & \text{if } \alpha \in [1, 2) \\
							2 & \text{if } \alpha \in [0, 1).
						\end{cases}
			\]
	\end{prop}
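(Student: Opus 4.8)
The plan is to reduce the computation to the two model $2\times 2$ paths already analysed in Examples~\ref{ex:Ralpha} and~\ref{ex:Nalpha}, by splitting $\phi_2$ as a diamond sum. Since $\Lambda_2$ is infinitesimally symplectic with spectrum $\{0,0,\pm i\sqrt{2-\alpha}\}$ and $\dim\ker\Lambda_2 = 1$ (so the zero eigenvalue sits in a single $2\times 2$ Jordan block), its real generalised eigenspace $V_0$ at $0$ and the $2$-dimensional elliptic subspace $V_\omega$ spanned by the eigendirections of $\pm i\sqrt{2-\alpha}$ are both $\Lambda_2$-invariant and are mutually symplectically orthogonal; hence $E_2 = V_0 \oplus V_\omega$ is a symplectic splitting. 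First I would write down an explicit symplectic change of coordinates $P$ (out of the entries of $\Lambda_2$ in~\eqref{eq:Lambda_23}: the eigenvector $\trasp{(1,0,0,-1)} \in \ker\Lambda_2$, an associated generalised eigenvector, and a symplectic basis of $V_\omega$) for which $P^{-1}\Lambda_2 P = A_\omega \diamond A_0$, with $A_0$ nilpotent and $A_\omega$ symplectically conjugate to $\Bigl( \begin{smallmatrix} 0 & -(2-\alpha) \\ 1 & 0 \end{smallmatrix} \Bigr)$. Then $P^{-1}\phi_2 P = \psi_\omega \diamond \psi_0$ with $\psi_\omega = e^{\tau A_\omega}$, $\psi_0 = e^{\tau A_0}$; since $i_1$ is invariant under conjugation by a fixed symplectic matrix (a standard consequence of homotopy invariance) and is $\diamond$-additive, $i_1(\phi_2) = i_1(\psi_\omega) + i_1(\psi_0)$.

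For the elliptic block, $\psi_\omega$ is, up to symplectic conjugacy, precisely the path $R_\alpha$ of Example~\ref{ex:Ralpha} on $[0,2\pi]$ (the rescaled period is $2\pi$, and the monodromy eigenvalues $e^{\pm 2\pi i\sqrt{2-\alpha}}$ agree); hence by Example~\ref{ex:Ralpha}, together with Lemma~\ref{thm:chiave}(1) (with $n=1$) applied to the path $R_\alpha$, $i_1(\psi_\omega) = \iclm(R_\alpha) - 1$, which equals $1$ for $\alpha\in[1,2)$ and $3$ for $\alpha\in[0,1)$. For the degenerate block, $\psi_0$ is a path of unipotent matrices lying entirely in $\overline{\Sp_1(2)}$, so I would compute its index by the perturbation recipe of Example~\ref{ex:Nalpha}: replacing $\psi_0$ by $\psi_0 e^{-\eps J}$ for small $\eps>0$ one gets $\det\bigl(\psi_0(\tau) e^{-\eps J} - I\bigr) = 2 - 2\cos\eps + \mu\tau\sin\eps$ with $\mu>0$, which has no zero on $[0,2\pi]$, so $\igeo(\psi_0) = 0$ and therefore $i_1(\psi_0) = \igeo(\psi_0) - 1 = -1$ (equivalently this can be read off from the splitting numbers in Proposition~\ref{thm:splittingprop}(\ref{8}), or from the fact that $\psi_0$ is homotopic through unipotent paths to the path $N_\alpha$ of Example~\ref{ex:Nalpha}, for which $i_1(N_\alpha) = \iclm(N_\alpha) - 1 = -1$). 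Adding the two contributions, $i_1(\phi_2) = (1\text{ or }3) - 1$, i.e. $0$ for $\alpha\in[1,2)$ and $2$ for $\alpha\in[0,1)$.

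The step I expect to be the real obstacle is fixing the time-orientation of the elliptic block: the index of $R_\alpha$ changes if one replaces it by a rotation running in the opposite sense (the crossing form at $\tau=0$ flips from positive to negative definite, changing the endpoint contribution in~\eqref{eq:CLMmaslovestremideg} by $2$), so it is essential that $A_\omega$ be conjugate to $\Bigl( \begin{smallmatrix} 0 & -(2-\alpha) \\ 1 & 0 \end{smallmatrix} \Bigr)$ and not to $-A_\omega$. I would settle this either by computing the Krein signature of the eigenvalue $i\sqrt{2-\alpha}$ of $\Lambda_2$ (Definition~\ref{def:kreinsign}) and invoking Proposition~\ref{prop:MaslovAbbo} on $V_\omega$, or directly through the sign of the symplectic pairing $\langle J v, w\rangle$ in the normalisation of $\Lambda_2|_{V_\omega}$ — a short but sign-sensitive computation. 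The remaining ingredients — the existence and explicit form of $P$, and the sign $\mu > 0$ in the shear block — are routine linear algebra from~\eqref{eq:Lambda_23}.
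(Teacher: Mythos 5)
Your proposal is correct and reaches the right answer, but it takes a genuinely different route from the paper. The paper works with the explicit fundamental solution $\phi_2(\tau)=\exp(\tau\Lambda_2)$, conjugates it by an explicit symplectic matrix $P$ and then deforms the conjugated path by an explicit homotopy (scaling the residual coupling entries by $s$) to the diamond product $R_\alpha\diamond N_\alpha$, where the degenerate factor is the shear with the non-monotone function $f_\alpha$ of Example~\ref{ex:Nalpha}; the index of that factor is obtained from two cancelling crossings of the perturbed path. You instead block-diagonalise the constant generator $\Lambda_2$ itself, using that the generalised eigenspace at $0$ (a single $2\times 2$ Jordan block, with kernel spanned by $\trasp{(1,0,0,-1)}$, as you say) and the elliptic plane of $\pm i\sqrt{2-\alpha}$ are symplectically orthogonal invariant subspaces; this yields an exact splitting $e^{\tau A_\omega}\diamond e^{\tau A_0}$ with a \emph{linear} shear in the degenerate block, whose perturbed path has no crossings at all, so its contribution $i_1=-1$ comes out even more cheaply than in Example~\ref{ex:Nalpha}. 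Both routes then rest on the same machinery ($\diamond$-additivity, Lemma~\ref{thm:chiave}, Example~\ref{ex:Ralpha}). What the paper's route buys is that the two sign-sensitive normalisations you rightly flag — the orientation of the elliptic block (conjugacy to the generator of $R_\alpha$ rather than its time-reversal) and the sign $\mu>0$ of the nilpotent generator (equivalently, monodromy conjugate to $N_1(1,1)$ rather than $N_1(1,-1)$) — are read off automatically from the explicit formulas, whereas in your scheme they are exactly where the content lies and must still be carried out; they do check out, consistently with the paper's computation. Two small points to tighten: the invariance $i_1(P^{-1}\phi_2 P)=i_1(\phi_2)$ is better quoted as symplectic invariance (naturality) of the index, since the endpoint $M_2$ is degenerate and the homotopy-invariance property as stated requires fixed or non-degenerate endpoints; and your displayed determinant for the perturbed shear has the sign $+\mu\tau\sin\eps$ only after you have fixed the correct normal form of $A_0$, which is precisely the verification you defer.
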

	
% % 	\begin{figure}[tb]
% % 		\begin{center}
% % 			\begin{asy}
% % 				import graph;
% % 			
% % 				size(300, 200, IgnoreAspect);
% % 				
% % 				// Assi coordinati
% % 				xaxis(xmin = -0.5, xmax = 10, arrow = EndArrow);
% % 				yaxis(ymin = 0, ymax=2, dashed);
% % 				
% % 				fill((0,0)--(9,0)--(9,1)--(0,1)--cycle, paleblue);
% % 				
% % 				// Disegno curve
% % 				// yequals(1, xmin=0, xmax=9);
% % 				yequals(2, xmin=0, xmax=9, dashed);
% % 				xequals(9, ymin=0, ymax=2);
% % 				draw((0,-0.2)--(0,0));
% % 				draw((0,2)--(0,2.35), arrow = EndArrow);
% % 						
% % 				// Etichette
% % 				label("0", (0, 0), SW);
% % 				labelx(9);
% % 				labelx("$\beta$", 10);
% % 				labely(1);
% % 				labely(2);
% % 				labely("$\alpha$", 2.35);
% % 				
% % 				label("0", (4.5,1.55), S);
% % 				label("2", (4.5,0.55), S);
% % 			\end{asy}
% % 			\caption{Values of $i_1(\phi_2)$. On the line $\alpha = 1$ it is equal to $0$.} \label{fig:MaslovE2}
% % 		\end{center}
% % 	\end{figure}
	
	\begin{proof}
	Here is the fundamental solution $\phi_2(\tau) \= \exp \bigl( \tau \Lambda_2 \bigr)$, with $\tau \in [0, 2\pi]$:
		\[
			\phi_2(\tau) =
				\begin{pmatrix}
					\frac{2 - \alpha \cos(\sqrt{2 - \alpha}\, \tau)}{2 - \alpha} & \frac{2 + \alpha}{2 - \alpha}\tau - \frac{2\alpha \sin(\sqrt{2 - \alpha}\, \tau)}{(2 - \alpha)^{3/2}} &
							\frac{2 + \alpha}{2 - \alpha}\tau - \frac{\alpha^2 \sin(\sqrt{2 - \alpha}\, \tau)}{(2 - \alpha)^{3/2}} & \frac{\alpha [1 - \cos(\sqrt{2 - \alpha}\, \tau)]}{2 - \alpha} \\[10pt]
					- \frac{\sin(\sqrt{2 - \alpha}\, \tau)}{\sqrt{2 - \alpha}} & \frac{2 \cos(\sqrt{2 - \alpha}\, \tau) - \alpha}{2 - \alpha} &
															\frac{\alpha [\cos(\sqrt{2 - \alpha}\, \tau) - 1]}{2 - \alpha} & - \frac{\sin(\sqrt{2 - \alpha}\, \tau)}{\sqrt{2 - \alpha}} \\[10pt]
					\frac{\sin(\sqrt{2 - \alpha}\, \tau)}{\sqrt{2 - \alpha}} & \frac{2 - 2 \cos(\sqrt{2 - \alpha}\, \tau)}{2 - \alpha} &
										\frac{2 - \alpha \cos(\sqrt{2 - \alpha}\, \tau)}{2 - \alpha} & \frac{\sin(\sqrt{2 - \alpha}\, \tau)}{\sqrt{2 - \alpha}} \\[10pt]
					\frac{2 \cos(\sqrt{2 - \alpha}\, \tau) - 2}{2 - \alpha} & \frac{4 \sin(\sqrt{2 - \alpha}\, \tau)}{(2 - \alpha)^{3/2}} - \frac{2 + \alpha}{2 - \alpha}\tau &														\frac{2 \alpha \sin(\sqrt{2 - \alpha}\, \tau)}{(2 - \alpha)^{3/2}} - \frac{2 + \alpha}{2 - \alpha}\tau & \frac{2 \cos(\sqrt{2 - \alpha}\, \tau) - \alpha}{2 - \alpha}
				\end{pmatrix}.
		\]
	Following \cite{MR3218836}, if we consider the symplectic matrix
		\[
			P \= \begin{pmatrix}
					1 & 0 & 0 & 6\pi \\
					0 & - \frac{1}{6\pi} & -1 & 0 \\
					0 & 0 & 1 & 0 \\
					0 & 0 & 0 & - 6\pi
				\end{pmatrix}
		\]
	we see that $\phi_2(\tau)$ is symplectically equivalent to $\tilde{\phi}_2(\tau) \= P^{-1} \phi_2(\tau) P$, which is given by
		\[
			\tilde{\phi}_2(\tau) \= \begin{pmatrix}
								\cos(\sqrt{2 - \alpha}\, \tau) & - \frac{2 \sin(\sqrt{2 - \alpha}\, \tau)}{6 \pi \sqrt{2 - \alpha}} & - \sqrt{2 - \alpha} \sin(\sqrt{2 - \alpha}\, \tau) & 0 \\[10pt]
								0 & 1 & 0 & 0 \\[10pt]
								\frac{\sin(\sqrt{2 - \alpha}\, \tau)}{\sqrt{2 - \alpha}} & \frac{2 \cos(\sqrt{2 - \alpha}\, \tau) - 2}{6 \pi (2 - \alpha)} & \cos(\sqrt{2 - \alpha}\, \tau) & 0 \\[10pt]
								\frac{2 - 2 \cos(\sqrt{2 - \alpha}\, \tau)}{6 \pi (2 - \alpha)} &
															\frac{1}{36 \pi^2} \Bigl( \frac{4 \sin(\sqrt{2 - \alpha}\, \tau)}{(2 - \alpha)^{3/2}} - \frac{2 + \alpha}{2 - \alpha}\tau \Bigr) &
																									\frac{2 \sin(\sqrt{2 - \alpha}\, \tau)}{6 \pi \sqrt{2 - \alpha}} & 1
							\end{pmatrix};
		\]
	it follows, by the naturality property, that $i_1(\phi_2) = i_1(\tilde{\phi}_2)$. Take now the homotopy $F : [0,1] \times [0, 2\pi] \to \Sp(4)$ defined by
		\[
			F(s, \tau) \= \begin{pmatrix}
									\cos(\sqrt{2 - \alpha}\, \tau) & - s \frac{2 \sin(\sqrt{2 - \alpha}\, \tau)}{6 \pi \sqrt{2 - \alpha}} & - \sqrt{2 - \alpha} \sin(\sqrt{2 - \alpha}\, \tau) & 0 \\[10pt]
									0 & 1 & 0 & 0 \\[10pt]
									\frac{\sin(\sqrt{2 - \alpha}\, \tau)}{\sqrt{2 - \alpha}} & s \frac{2 \cos(\sqrt{2 - \alpha}\, \tau) - 2}{6 \pi (2 - \alpha)} & \cos(\sqrt{2 - \alpha}\, \tau) & 0 \\[10pt]
									s \frac{2 - 2 \cos(\sqrt{2 - \alpha}\, \tau)}{6 \pi (2 - \alpha)} &
															\frac{1}{36 \pi^2} \Bigl( \frac{4 \sin(\sqrt{2 - \alpha}\, \tau)}{(2 - \alpha)^{3/2}} - \frac{2 + \alpha}{2 - \alpha}\tau \Bigr) &
																								s \frac{2 \sin(\sqrt{2 - \alpha}\, \tau)}{6 \pi \sqrt{2 - \alpha}} & 1
								\end{pmatrix}.
		\]
	It is admissible because we have that $F(1, \tau) = \tilde{\phi}_2(s, \tau) \in \Sp(4)$ and $F(s, 0) = I_4$ for all $s \in [0,1]$ and all $\tau \in [0, 2\pi]$.
	Moreover, $F(1, \tau) = \tilde{\phi}_2(\tau)$ and
		\[
			\begin{split}
				\tilde{\phi}_2(0, \tau) & = \begin{pmatrix}
										\cos(\sqrt{2 - \alpha}\, \tau) & 0 & - \sqrt{2 - \alpha} \sin(\sqrt{2 - \alpha}\, \tau) & 0 \\
										0 & 1 & 0 & 0 \\
										\frac{\sin(\sqrt{2 - \alpha}\, \tau)}{\sqrt{2 - \alpha}} & 0 & \cos(\sqrt{2 - \alpha}\, \tau) & 0 \\
										0 & \frac{1}{36 \pi^2} \Bigl( \frac{4 \sin(\sqrt{2 - \alpha}\, \tau)}{(2 - \alpha)^{3/2}} - \frac{2 + \alpha}{2 - \alpha}\tau \Bigr) & 0 & 1
									\end{pmatrix} \\
								& \\
								& = \begin{pmatrix}
										\cos(\sqrt{2 - \alpha}\, \tau) & - \sqrt{2 - \alpha} \sin(\sqrt{2 - \alpha}\, \tau) \\
										\frac{\sin(\sqrt{2 - \alpha}\, \tau)}{\sqrt{2 - \alpha}} & \cos(\sqrt{2 - \alpha}\, \tau)
									\end{pmatrix} \diamond
									\begin{pmatrix}
										1 & 0 \\
										\frac{1}{36 \pi^2} \Bigl( \frac{4 \sin(\sqrt{2 - \alpha}\, \tau)}{(2 - \alpha)^{3/2}} - \frac{2 + \alpha}{2 - \alpha}\tau \Bigr) & 1
									\end{pmatrix} \\
								& \\
								& \eq R_\alpha(\tau) \diamond N_\alpha(\tau).
			\end{split}
		\]
	Therefore, being the Maslov index a homotopic invariant, we have
		\begin{equation} \label{eq:MaslovE2}
			i_1 (\phi_2) = i_1(R_\alpha) + i_1(N_\alpha).
		\end{equation}
	From Example~\ref{ex:Ralpha}, Example~\ref{ex:Nalpha} and Lemma~\ref{thm:chiave} we find
		\[
			i_1 (R_\alpha) =
				\begin{cases}
					1 & \text{if } \alpha \in (1, 2) \\
					3 & \text{if } \alpha \in [0, 1),
				\end{cases}
				\qquad \qquad
			i_1 (N_\alpha) = -1 \qquad \forall\, \alpha \in [0, 2),
		\]
	and the thesis follows.
	\end{proof}

	\subsection{Computation of the $\omega$-index on $E_2$}

	Next we compute the $\omega$-index $i_\omega(\phi_2)$ for all $\omega \in \U \setminus \{ 1 \}$. To this end we have to compute first the splitting numbers of the monodromy matrix
		\[
			M_2 \= \phi_2(2\pi) \sim R_\alpha(2\pi) \diamond N_1(1, 1) \qquad \text{for every } \alpha \in [0, 2).
		\]
	We note that $R_\alpha(\tau)$ is not a normal form for every $\tau \in [0, 2\pi]$; however, it is homotopic to the rotation $R(\sqrt{2 - \alpha}\, \tau)$ via the map $G : [0, 1] \times [0, 2\pi] \to \Omega^0(R_\alpha)$ defined by
		\[
			G(s, \tau) \= \begin{pmatrix}
						\cos(\sqrt{2 - \alpha}\, \tau) & - \frac{\sqrt{2 - \alpha} \sin(\sqrt{2 - \alpha}\, \tau)}{1 - s + s\sqrt{2 - \alpha}} \\
						\frac{(1 - s + s\sqrt{2 - \alpha}) \sin(\sqrt{2 - \alpha}\, \tau)}{\sqrt{2 - \alpha}} & \cos(\sqrt{2 - \alpha}\, \tau)
					\end{pmatrix}.
		\]
	Accordingly, for all $\alpha \in [0, 2)$
		\begin{equation} \label{eq:M2decomp}
			M_2 \sim R(\theta_\alpha) \diamond N_1(1, 1),
		\end{equation}
	where, modulo $2\pi$,
		\begin{equation} \label{eq:thetaalpha}
			\theta_\alpha \= 2\pi\sqrt{2 - \alpha} \in \begin{cases}
											\{ 0 \} & \text{if } \alpha = 1 \\
											(0, \pi) & \text{if } \alpha \in [0, 1) \cup \bigl( \frac{7}{4}, 2 \bigr) \\
											\{ \pi \} & \text{if } \alpha = \frac{7}{4} \\
											(\pi, 2\pi) & \text{if } \alpha \in \bigl( 1, \frac{7}{4} \bigr)
										\end{cases}
		\end{equation}
	
%	\begin{figure}[tb]
%		\centering
%			\begin{asy}
%				import graph;
%			
%				size(200, 200*2/3, IgnoreAspect);
%				
%				// Assi coordinati
%				xaxis(xmin = -0.5, xmax = 10, arrow = EndArrow);
%				yaxis(ymin = 0, ymax=2, dashed);
%				
%				fill((0,0)--(9,0)--(9,7/4)--(0,7/4)--cycle, paleblue);
%				
%				// Disegno curve
%				yequals(2, xmin=0, xmax=9, dashed);
%				xequals(9, ymin=0, ymax=2);
%				draw((0,-0.2)--(0,0));
%				draw((0,2)--(0,2.35), arrow = EndArrow);
%						
%				// Etichette
%				label("$\scriptstyle 0$", (0, 0), SW);
%				label("$\scriptstyle 9$", (9, 0), S);
%				label("$\scriptstyle \beta$", (10, 0), S);
%				label("$\scriptstyle 7/4$", (0, 7/4), W);
%				label("$\scriptstyle 2$", (0, 2), W);
%				labely("$\scriptstyle \alpha$", (0, 2.35), W);
%				
%				label("$\scriptstyle 0$", (4.5,1.975), S);
%				label("$\scriptstyle 2$", (4.5,1), S);
%			\end{asy}
%			\caption{Values of $i_{-1}(\phi_2)$. On the line $\alpha = \frac{7}{4}$ it is equal to $0$.} \label{fig:i-1phi2}
%	\end{figure}
	
	\begin{prop} \label{prop:omegaE2}
		The $\omega$-index $i_\omega(\phi_2)$ of the fundamental solution $\phi_2$ is given by:
			\begin{enumerate}[(i)]
				\item $\alpha \in \bigl[ \frac{7}{4}, 2 \bigr)$:
						\[
							i_\omega(\phi_2) = \begin{cases}
											1 & \text{if $0 < \theta < \theta_\alpha$} \\
											0 & \text{if $\theta_\alpha \leq \theta \leq \pi$}
										\end{cases}
						\]
				\item $\alpha \in \bigl( 1, \frac{7}{4} \bigr)$:
						\[
							i_\omega(\phi_2) = \begin{cases}
											1 & \text{if $0 < \theta \leq -\theta_\alpha$} \\
											2 & \text{if $-\theta_\alpha < \theta \leq \pi$}
										\end{cases}
						\]
				\item $\alpha = 1$:
						\[
							i_\omega(\phi_2) = 2 \quad \text{for all $\theta \in (0, \pi]$}
						\]
				\item $\alpha \in [0,1)$:
						\[
							i_\omega(\phi_2) = \begin{cases}
											3 & \text{if $0 < \theta < \theta_\alpha$} \\
											2 & \text{if $-\theta_\alpha \leq \theta \leq \pi$}
										\end{cases}
						\]
			\end{enumerate}
		where $\omega = e^{i \theta} \neq 1$.
	\end{prop}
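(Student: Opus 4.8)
The plan is to compute the splitting numbers of the monodromy matrix $M_2 = \phi_2(2\pi)$ and then read off $i_\omega(\phi_2)$ from the jump formula \ref{formula} of Proposition~\ref{thm:splittingprop}, taking advantage of the fact that $i_1(\phi_2)$ is already known from Proposition~\ref{thm:morsekepler}. By the normal form \eqref{eq:M2decomp} we have $M_2 \sim R(\theta_\alpha) \diamond N_1(1,1)$ with $\theta_\alpha$ as in \eqref{eq:thetaalpha}; since splitting numbers are constant on $\Omega^0(M_2)$ (property~\ref{2}) and additive under the diamond product (property~\ref{7}), for every $\omega \in \U$
	\[
		S^\pm_{M_2}(\omega) = S^\pm_{R(\theta_\alpha)}(\omega) + S^\pm_{N_1(1,1)}(\omega).
	\]

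First I would evaluate the two summands. For the unipotent block, property~\ref{3} gives $S^\pm_{N_1(1,1)}(\omega) = 0$ whenever $\omega \neq 1$, while property~\ref{8} gives $S^+_{N_1(1,1)}(1) = S^-_{N_1(1,1)}(1) = 1$. For the rotation block, $S^\pm_{R(\theta_\alpha)}(\omega)$ is non-zero only at $\omega = e^{\pm i\theta_\alpha}$, and the non-zero values are supplied by properties~\ref{10}, \ref{11} and \ref{4}: if $\theta_\alpha \in (0,\pi)$ one gets $\bigl(S^+, S^-\bigr)_{R(\theta_\alpha)}(e^{i\theta_\alpha}) = (0,1)$ and $(1,0)$ at $e^{-i\theta_\alpha}$; if $\theta_\alpha \in (\pi, 2\pi)$ the two are interchanged; if $\theta_\alpha = \pi$ then $R(\pi) = -I_2 = N_1(-1,0)$ and property~\ref{9} yields $(1,1)$ at $-1$; and in the degenerate case $\alpha = 1$ we have $\theta_\alpha \equiv 0$, $R(\theta_\alpha) = I_2 = N_1(1,0)$, so that property~\ref{8} gives $S^+_{M_2}(1) = S^-_{M_2}(1) = 2$.

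Next I would substitute these data into
	\[
		i_\omega(\phi_2) = i_1(\phi_2) + S^+_{M_2}(1) + \sum_{\omega_0}\bigl(S^+_{M_2}(\omega_0) - S^-_{M_2}(\omega_0)\bigr) - S^-_{M_2}(\omega),
	\]
which holds for $\omega = e^{i\theta}$ with $\Im\omega \geq 0$ (the remaining values following from $i_{\bar\omega}(\phi_2) = i_\omega(\phi_2)$, since $\phi_2$ is a path of real matrices), where $\omega_0$ runs over the eigenvalues of $M_2$ on the open upper arc joining $1$ to $\omega$ and where $i_1(\phi_2)$ equals $0$ for $\alpha \in [1,2)$ and $2$ for $\alpha \in [0,1)$ by Proposition~\ref{thm:morsekepler}. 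It then remains to go through the ranges of $\alpha$ prescribed by \eqref{eq:thetaalpha}: for $\alpha \in [0,1)\cup(7/4,2)$ one has $\theta_\alpha \in (0,\pi)$, the only eigenvalue of $M_2$ on the upper arc is $e^{i\theta_\alpha}$, and $i_\omega(\phi_2)$ drops by one as $\theta$ passes $\theta_\alpha$; for $\alpha \in (1,7/4)$ one has $\theta_\alpha \in (\pi,2\pi)$, the relevant eigenvalue is $e^{-i\theta_\alpha}$ and the jump takes place at $\theta = -\theta_\alpha$; the value $\alpha = 7/4$ is the limiting case $\theta_\alpha = \pi$, $\omega = -1$; and $\alpha = 1$ gives the constant value $i_\omega(\phi_2) = i_1(\phi_2) + 2 = 2$. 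Assembling these four subcases reproduces exactly the statement.

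The evaluation of the splitting numbers is routine; the one delicate point is the bookkeeping in the last step, where one must keep track of which of $e^{i\theta_\alpha}$ and $e^{-i\theta_\alpha}$ lies on the arc from $1$ to $\omega$ — this is what forces the split into the subcases $\theta_\alpha \lessgtr \pi$ — and of the contribution $S^-_{M_2}(\omega)$ at the two boundary instants $\theta = \theta_\alpha$ and $\theta = -\theta_\alpha$, which is precisely where the closed and open endpoints in the four formulas originate. Everything else is a direct application of the properties of splitting numbers collected in Proposition~\ref{thm:splittingprop} together with the already established identities \eqref{eq:M2decomp}, \eqref{eq:thetaalpha} and Proposition~\ref{thm:morsekepler}.
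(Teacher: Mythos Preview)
Your proposal is correct and follows essentially the same route as the paper: decompose $M_2$ as $R(\theta_\alpha)\diamond N_1(1,1)$, compute the splitting numbers from Proposition~\ref{thm:splittingprop}, plug them into the jump formula~\ref{formula} together with the known value of $i_1(\phi_2)$, and run through the cases dictated by~\eqref{eq:thetaalpha}. The paper organises the case analysis into five subcases (treating $\alpha=7/4$ and $\alpha=1$ separately), whereas you fold $\alpha=7/4$ into item~(i) as the limiting case $\theta_\alpha=\pi$; this is entirely consistent with the statement and with your handling of the endpoint contribution $S^-_{M_2}(-1)$.
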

	
	\begin{proof}
	Item~\ref{formula} of Proposition~\ref{thm:splittingprop} gives
		\begin{equation} \label{eq:33HS}
			i_\omega(\phi_2) = i_1(\phi_2) + S_{M_2}^+(1) + \sum_{\omega_0} \bigl( S_{M_2}^+(\omega_0) - S_{M_2}^-(\omega_0) \bigr) - S_{M_2}^-(\omega),
		\end{equation}
	where $\omega \in \U \setminus \{ 1 \}$ is such that $\Im(\omega) \geq 0$ and $\omega_0 \in \sigma(M_2)$ lies in the interior of the arc of the upper unit semicircle connecting $1$ and
	$\omega$ (see Figure~\ref{fig:circle}).
	Note that the assumption $\Im(\omega) \geq 0$ does not imply any loss of generality: by virtue of Item~\ref{4} of Proposition~\ref{thm:splittingprop} we have indeed that
		\[
			i_{\overline{\omega}}(\phi_2) = i_\omega(\phi_2).
		\]
	
	\begin{figure}[tb]
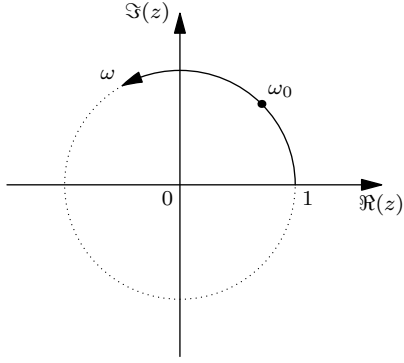

		\begin{center}
			\begin{asy}
				import graph;
			
				size(150,0);
			
				xaxis(-1.5, 1.75, arrow = EndArrow);
				yaxis(-1.5, 1.5, arrow = EndArrow);
				label("$\scriptstyle \Re(z)$", (1.75, 0), S);
				label("$\scriptstyle 1$", (1,0), SE);
				label("$\scriptstyle \Im(z)$", (0, 1.5), W);
				label("$\scriptstyle 0$", (0, 0), SW);
			
				draw(arc((0, 0), 1, 0, 120, CCW), EndArrow);
				draw(arc((0, 0), 1, 120, 360, CCW), dotted);
				
				label("$\scriptstyle \omega$", (-0.5, 0.866), NW);
				dot((0.707, 0.707));
				label("$\scriptstyle \omega_0$", (0.707, 0.707), NE);
			\end{asy}
			\caption{Position of $\omega$ and $\omega_0$.} \label{fig:circle}
		\end{center}
	\end{figure}
	
	From \eqref{eq:M2decomp} we find that for every $\omega \in \U$ with $\Im(\omega) \geq 0$
		\[
			S^{\pm}_{M_2}(\omega) = \begin{cases}
									S^{\pm}_{R(\theta_\alpha)}(\omega) + S^{\pm}_{N_1(1, 1)}(\omega) & \text{if } \alpha \in [0, 2) \setminus \bigl\{ 1, \frac{7}{4} \bigr\}, \\
									S^{\pm}_{-I_2}(\omega) + S^{\pm}_{N_1(1, 1)}(\omega) & \text{if } \alpha = \frac{7}{4}, \\
									S^{\pm}_{I_2}(\omega) + S^{\pm}_{N_1(1, 1)}(\omega) & \text{if } \alpha = 1.
								\end{cases}
		\]
	Thanks to the results collected in Proposition~\ref{thm:splittingprop} we know that if $\omega \notin \sigma(M_2) = \{ 1, 1, e^{i\theta_\alpha}, e^{-i\theta_\alpha} \}$ then
	$S^\pm_{M_2}(\omega) = 0$; moreover the splitting numbers involved are the following:
		\begin{subequations} \label{eq:splitnum}
		\begin{align}
			\big(S_{N_1(1,1)}^+(1), S_{N_1(1,1)}^-(1)\big) & = (1,1), \\
			\big(S_{R(\theta_\alpha)}^+(e^{i \theta_\alpha}), S_{R(\theta_\alpha)}^-(e^{i \theta_\alpha})\big) & = (0,1), \qquad \forall\, \alpha \in [0, 2) \setminus \bigl\{ 1, \tfrac{7}{4} \bigr\} \\
			\bigl( S_{I_2}^+(1), S_{I_2}^-(1) \bigr) & = (1, 1), \\
			\bigl( S_{-I_2}^+(-1), S_{-I_2}^-(-1) \bigr) & = (1, 1).
		\end{align}
		\end{subequations}

	Writing $\omega \= e^{i\theta}$, we are now able to compute the $\omega$-index depending on $\alpha$ and on the position of $\omega$ with respect to the eigenvalues
	$e^{\pm i \theta_\alpha}$ (modulo $2\pi$). Using Formula~\eqref{eq:33HS}, we distinguish the following cases:
		\begin{enumerate}[(i)]
			\item $\alpha \in \bigl( \frac{7}{4}, 2 \bigr)$:
					\[
						i_\omega(\phi_2) = \begin{cases}
										i_1(\phi_2) + S^+_{M_2}(1) & \text{if $\theta \in (0, \theta_\alpha)$} \\
										i_1(\phi_2) + S^+_{M_2}(1) - S^-_{M_2}(e^{i \theta_\alpha}) & \text{if $\theta = \theta_\alpha$} \\
										i_1(\phi_2) + S^+_{M_2}(1) + S^+_{M_2}(e^{i \theta_\alpha}) - S^-_{M_2}(e^{i \theta_\alpha}) & \text{if $\theta \in (\theta_\alpha, \pi]$},
									\end{cases}
					\]
				leading to
					\[
						i_\omega(\phi_2) = \begin{cases}
											1 & \text{if $\theta \in (0, \theta_\alpha)$} \\
											0 & \text{if $\theta \in [\theta_\alpha, \pi]$}.
										\end{cases}
					\]
			\item $\alpha = \frac{7}{4}$:
					\[
						i_\omega(\phi_2) = \begin{cases}
										i_1(\phi_2) + S^+_{M_2}(1) & \text{if $\theta \in (0, \pi)$} \\
										i_1(\phi_2) + S^+_{M_2}(1) - S^-_{M_2}(-1) & \text{if $\theta = \pi$},
									\end{cases}
					\]
				giving
					\[
						i_\omega(\phi_2) = \begin{cases}
											1 & \text{if $\theta \in (0, \pi)$} \\
											0 & \text{if $\theta = \pi$}.
										\end{cases}
					\]
			\item $\alpha \in \bigl( 1, \frac{7}{4} \bigr)$:
					\[
						i_\omega(\phi_2) = \begin{cases}
										i_1(\phi_2) + S^+_{M_2}(1) & \text{if $\theta \in (0, -\theta_\alpha)$} \\
										i_1(\phi_2) + S^+_{M_2}(1) - S^-_{M_2}(e^{-i \theta_\alpha}) & \text{if $\theta = -\theta_\alpha$} \\
										i_1(\phi_2) + S^+_{M_2}(1) + S^+_{M_2}(e^{-i \theta_\alpha}) - S^-_{M_2}(e^{-i \theta_\alpha}) & \text{if $\theta \in (-\theta_\alpha, \pi]$},
									\end{cases}
					\]
				yielding
					\[
						i_\omega(\phi_2) = \begin{cases}
											1 & \text{if $\theta \in (0, -\theta_\alpha]$} \\
											2 & \text{if $\theta \in (-\theta_\alpha, \pi]$}.
										\end{cases}
					\]
			\item $\alpha = 1$:
					\[
						i_\omega(\phi_2) = i_1(\phi_2) + S^+_{M_2}(1) = 2 \qquad \text{for all $\theta \in (0, \pi]$}.
					\]
			\item $\alpha \in [0, 1)$:
					\[
						i_\omega(\phi_2) = \begin{cases}
										i_1(\phi_2) + S^+_{M_2}(1) & \text{if $\theta \in (0, \theta_\alpha)$} \\
										i_1(\phi_2) + S^+_{M_2}(1) - S^-_{M_2}(e^{i \theta_\alpha}) & \text{if $\theta = \theta_\alpha$} \\
										i_1(\phi_2) + S^+_{M_2}(1) + S^+_{M_2}(e^{i \theta_\alpha}) - S^-_{M_2}(e^{i \theta_\alpha}) & \text{if $\theta \in (\theta_\alpha, \pi]$},
									\end{cases}
					\]
				obtaining
					\[
						i_\omega(\phi_2) = \begin{cases}
											3 & \text{if $\theta \in (0, \theta_\alpha)$} \\
											2 & \text{if $\theta \in [\theta_\alpha, \pi]$}.
										\end{cases} \qedhere
					\]
		\end{enumerate}
	\end{proof}

	The following result is a direct consequence of Lemma~\ref{thm:bott} and generalises \cite[Proposition~3.6]{MR2563212} to the $\alpha$-homogeneous case.
	
	\begin{prop}\label{thm:iteratesuE_2}
		Let $\phi_2$ be the fundamental solution of System~\eqref{eq:IVP1} and $k \in \N \setminus \{ 0 \}$. Then the Maslov index of the $k$-th iteration $\phi_2^k$ of
		$\phi_2$ is given by $i_1(\phi_2^k) = \sum_{\omega^k = 1} i_\omega(\phi_2)$ and is equal to:
			\begin{enumerate}[(i)]
				\item $\alpha \in \big( \frac{7}{4}, 2 \big)$:
						\[
							i_1(\phi_2^k) = 2(n_{k, \alpha}^- - 1),
						\]
					where $n_{k, \alpha}^-$ is the number of $k$-th roots of unity in the arc $[1, e^{i \theta_\alpha})$;
				\item $\alpha \in \big( 1, \frac{7}{4} \big)$:
						\[
							i_1(\phi_2^k) = \begin{cases}
											2(n_{k, \alpha}^- - 1) + 4(n_{k, \alpha}^+ - 1) + 2 & \text{if $k$ is even} \\
											2(n_{k, \alpha}^- - 1) + 4n_{k, \alpha}^+ & \text{if $k$ is odd} \\
										\end{cases}
						\]
					where $n_{k, \alpha}^-$ is the number of $k$-th roots of unity in the arc $[1, e^{-i \theta_\alpha}]$ and $n_{k,\alpha}^+$ is the number of $k$-th roots of unity in the
					arc $(e^{- i \theta_\alpha}, -1]$;
				\item $\alpha = 1$:
						\[
							i_1(\phi_2^k) = 2(k - 1)
						\]
				\item $\alpha \in [0, 1)$:
						\[
							i_1(\phi_2^k) = \begin{cases}
											6(n_{k, \alpha}^- - 1) + 4(n_{k, \alpha}^+ - 1) + 4 & \text{if $k$ is even} \\
											6(n_{k, \alpha}^- - 1) + 4n_{k, \alpha}^+ + 2 & \text{if $k$ is odd},
										\end{cases}
						\]
					where $n_{k, \alpha}^-$ is the number of $k$-th roots of unity in the arc $[1, e^{\pm i \theta_\alpha})$ and $n_{k,\alpha}^+$ is the number of $k$-th roots of unity in the
					arc $[e^{\pm i \theta_\alpha}, -1]$.
			\end{enumerate}
	\end{prop}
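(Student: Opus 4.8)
The plan is to apply the Bott--Long iteration formula (Lemma~\ref{thm:bott}) directly, which already gives the first displayed equality
\[
	i_1(\phi_2^k) = \sum_{\omega^k = 1} i_\omega(\phi_2),
\]
and then to evaluate the right-hand side by splitting off the term $\omega = 1$, where $i_1(\phi_2)$ is provided by Proposition~\ref{thm:morsekepler}, and grouping the remaining $k$-th roots of unity into complex-conjugate pairs. This pairing is legitimate because of the symmetry $i_{\bar\omega}(\phi_2) = i_\omega(\phi_2)$ noted in the proof of Proposition~\ref{prop:omegaE2} (a consequence of Item~\ref{4} of Proposition~\ref{thm:splittingprop}). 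After pairing, the sum only ``sees'' the values of $i_\omega(\phi_2)$ on the upper unit semicircle, which by Proposition~\ref{prop:omegaE2} are piecewise constant with jumps at $1$, at $e^{\pm i\theta_\alpha}$ and at $-1$; hence the computation reduces to counting how many $k$-th roots of unity fall on each arc of constancy, and these counts are precisely the integers $n_{k,\alpha}^\pm$ appearing in the statement.

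Concretely, I would first record that for $k$ odd
\[
	i_1(\phi_2^k) = i_1(\phi_2) + 2\sum_{\substack{\omega^k = 1 \\ \Im\omega > 0}} i_\omega(\phi_2),
\]
while for $k$ even the same formula holds with the extra summand $i_{-1}(\phi_2)$ coming from the self-conjugate root $-1$. Then, in each of the four ranges of $\alpha$, I would substitute the piecewise-constant values of $i_\omega(\phi_2)$ from Proposition~\ref{prop:omegaE2} together with the value of $i_1(\phi_2)$ from Proposition~\ref{thm:morsekepler}. For $\alpha \in \bigl(\tfrac74, 2\bigr)$ one has $i_1(\phi_2) = i_{-1}(\phi_2) = 0$ and $i_\omega(\phi_2) = 1$ exactly on the arc $(1, e^{i\theta_\alpha})$, so only the $n_{k,\alpha}^- - 1$ roots of unity on this arc contribute (the ``$-1$'' excluding $\omega = 1$), giving $i_1(\phi_2^k) = 2(n_{k,\alpha}^- - 1)$ for both parities. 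The ranges $\alpha \in \bigl(1, \tfrac74\bigr)$ and $\alpha \in [0,1)$ are handled the same way, but now two arcs are in play --- on which $i_\omega(\phi_2)$ equals $1$ and $2$, respectively $3$ and $2$ --- and the parity of $k$ enters because it decides whether $-1$ is itself a $k$-th root of unity and whether it lies strictly inside the ``outer'' arc or on its boundary; this is the source of the two-line formulas and of the alternation between the coefficients $n_{k,\alpha}^+ - 1$ and $n_{k,\alpha}^+$. The case $\alpha = 1$ is immediate: $i_\omega(\phi_2) = 2$ for every $\omega \ne 1$ and there are $k - 1$ such roots of unity, so $i_1(\phi_2^k) = 2(k-1)$.

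The only genuinely delicate point is the bookkeeping of arc endpoints: a $k$-th root of unity may land exactly on $e^{\pm i\theta_\alpha}$ (when $2\pi\sqrt{2-\alpha}$ is congruent to an integer multiple of $2\pi/k$) or exactly on $-1$ (when $k$ is even), and at these points the local value of $i_\omega(\phi_2)$ jumps, so one must keep straight which side of the jump such a root belongs to. The conventions in the definitions of $n_{k,\alpha}^\pm$ (half-open arcs $[1, e^{\pm i\theta_\alpha})$ and closed arcs $[e^{\pm i\theta_\alpha}, -1]$) are chosen precisely to make these boundary roots counted on the correct side, so once the conventions are fixed the verification is a routine substitution. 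The borderline value $\alpha = \tfrac74$, where $e^{\pm i\theta_\alpha} = -1$, is subsumed in case~(i) by the very same argument.
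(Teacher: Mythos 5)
Your proposal is correct and follows exactly the route the paper takes: the paper offers no argument beyond declaring the proposition ``a direct consequence'' of the Bott--Long formula (Lemma~\ref{thm:bott}) combined with the values of $i_\omega(\phi_2)$ from Propositions~\ref{thm:morsekepler} and~\ref{prop:omegaE2}, and your pairing of conjugate roots via $i_{\overline{\omega}}(\phi_2)=i_\omega(\phi_2)$, the parity split at $\omega=-1$, and the endpoint bookkeeping with the half-open/closed arcs defining $n_{k,\alpha}^\pm$ reproduce the stated formulas in all four ranges of $\alpha$. In fact your write-up is more explicit than the paper's, and your remark that the case $\alpha=\tfrac74$ fits into formula (i) is a correct (if unstated in the paper) observation.
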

	
%	\begin{rmk} \label{rmk:alpha=2bis}
		We observe that, for fixed $k$, the index $i_1(\phi_2^k)$ is constant on horizontal bands of the rectangle $(0,9] \times [0,2)$, since it is independent of $\beta$ (see
		Figure~\vref{fig:iterates}). From the previous proposition it is evident that the index is monotonically non-increasing as $\alpha$ increases for every $k \in \N \setminus \{0\}$.
		
		Since the computation of the Maslov index of the iterate is based on the Bott-Long formula, it is clear that the only contributions to this value are given by those $\omega$-indices for
		which $\omega$ is a root of unity. This means that one has a jump in the index of the $k$-th iterate only when the angle $\theta_\alpha$ (defined in \eqref{eq:thetaalpha}) is a rational
		multiple of $2\pi$, \ie $\theta_\alpha = \frac{2l\pi}{k}$ for some $l \in \N \setminus \{0\}$. Now, since $\theta_\alpha \in [0, 2\sqrt{2}\pi]$ it follows that $l$ actually ranges in the set
		$\{1, \dotsc, [\sqrt{2}k] \}$.
		
		In particular the Maslov index vanishes when $0 < \theta_\alpha < \frac{2\pi}{k}$, that is when $\alpha > 2 - \frac{1}{k^2}$.
		As $k$ increases, the horizontal lines corresponding to the jumps of $i_1(\phi_2^k)$, which are characterised by the double sequence $(\alpha_{k, l})$ with
		$\alpha_{k, l} \= 2 - \frac{l^2}{k^2}$, accumulate at the stability threshold $\alpha = 2$ as $k \to +\infty$ (see Remark~\ref{rmk:alpha=2}).
		
		Let us now fix $\alpha \in [0,2)$. The number of $k$-th roots of unity in the arc $[1, e^{\pm i \theta_\alpha})$ increases with $k$ and diverges to $+\infty$ as $k \to +\infty$, hence
		$i_1(\phi_2^k) \to +\infty$ as $k \to +\infty$.
%	\end{rmk}

%	---------------------------------------------------------------------
	\section{$\omega$-index associated with the restriction to $E_3$}
%	---------------------------------------------------------------------

	In this section we perform the computation of the $\omega$-index of the restriction $\phi_3$ to $E_3$ of the fundamental solution $\Phi$ of the Lagrangian circular orbit. 
	This will be achieved, as before, by means of the splitting numbers.

	\subsection{Computation of the Maslov index}\label{subs:MaslovE3}
	
	The restriction $\phi_3$ to $E_3$ of the fundamental solution $\Phi$ of the Lagrangian circular orbit satisfies the linear autonomous Hamiltonian initial value problem
		\begin{equation}
			\begin{cases}
				\dot{\phi}_3(\tau) = \Lambda_3 \phi_3(\tau) \\
				\phi_3(0) = I_4.
			\end{cases}
		\end{equation}
	
	By taking into account Proposition~\ref{thm:stabilita}, we immediately get the following result.
	
	\begin{prop} \label{thm:Maslovinst}
		The Maslov index $i_1(\phi_3)$ is zero for all $(\beta, \alpha) \in \mathit{SI}$.
	\end{prop}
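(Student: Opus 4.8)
The plan is to recognise the statement as a one-line consequence of Proposition~\ref{prop:MaslovAbbo}, applied to the autonomous Hamiltonian system governing $E_3$, once we observe that on $\mathit{SI}$ the coefficient matrix has \emph{no} purely imaginary eigenvalue at all. First I would make explicit the splitting $\Lambda_3 = J B_3$: by the construction in the proof of Proposition~\ref{prop:Lambdahom} (namely $J$ times the Hessian of the reduced Hamiltonian), $B_3$ is the real symmetric matrix read off from~\eqref{eq:Lambda_23},
\[
	B_3 = \begin{pmatrix}
		1 & 0 & 0 & 1 \\
		0 & 1 & -1 & 0 \\
		0 & -1 & -\tfrac{1}{2}\bigl( \alpha + \tfrac{\alpha+2}{3}\sqrt{\smash[b]{9-\beta}} \bigr) & 0 \\
		1 & 0 & 0 & -\tfrac{1}{2}\bigl( \alpha - \tfrac{\alpha+2}{3}\sqrt{\smash[b]{9-\beta}} \bigr)
	\end{pmatrix},
\]
so that $\phi_3$ is precisely the fundamental solution of $\dot\phi_3 = J B_3\,\phi_3$ with $B = B_3$ real symmetric, i.e.\ exactly the setting of Proposition~\ref{prop:MaslovAbbo} with $T = 2\pi$.

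Next I would quote Proposition~\ref{thm:stabilita}: for $(\beta,\alpha)\in\mathit{SI}$ one has $9(\alpha-2)^2-\beta(\alpha+2)^2<0$, hence (from the explicit eigenvalues $\lambda_{1,2}^\pm$ recalled just before that proposition) each eigenvalue of $\Lambda_3 = JB_3$ is, up to the factor $i/6$, a square root of a genuinely non-real complex number, and therefore has non-zero real part; equivalently $\sigma(\Lambda_3)$ is a complex quadruplet $\{\pm a\pm i b\}$ with $a,b\neq 0$. In particular $JB_3$ has no purely imaginary eigenvalue whatsoever, so in the notation of Proposition~\ref{prop:MaslovAbbo} the list $i\theta_1,\dots,i\theta_k$ of Krein-positive purely imaginary eigenvalues of $JB_3$ is empty ($k=0$). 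Then the non-degeneracy condition $\theta_j T\notin 2\pi\Z$ (required for $j=1,\dots,k$) holds vacuously, $\phi_3$ is non-degenerate at $T=2\pi$, and the index formula of Proposition~\ref{prop:MaslovAbbo} gives $i_1(\phi_3)=-\sum_{j=1}^{0}\bigl[\bigl[\,2\theta_j\,\bigr]\bigr]=0$, the sum being empty; this is the assertion.

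There is essentially no obstacle in this argument; the only points that need care are the symmetry of $B_3$ (immediate from the block pattern of~\eqref{eq:Lambda_23}) and the sign of the discriminant placing $\sigma(\Lambda_3)$ entirely off the imaginary axis on $\mathit{SI}$. As a self-contained cross-check not invoking Proposition~\ref{prop:MaslovAbbo}: since $\Lambda_3$ is hyperbolic on $\mathit{SI}$, the path $\tau\mapsto e^{\tau\Lambda_3}$ never has $1$ as an eigenvalue for $\tau\in(0,2\pi]$, so $\Gr(\phi_3)$ meets the diagonal $\Delta$ only at $\tau=0$; a direct computation shows the crossing form there is the quadratic form of $B_3$ (up to an immaterial overall sign), and $B_3$ has signature $0$ --- two positive and two negative eigenvalues, since $\det B_3=\tfrac{(\alpha+2)^2}{36}\,\beta>0$ while the two $2\times 2$ blocks obtained by grouping the coordinates $\{1,4\}$ and $\{2,3\}$ are each indefinite (negative determinant) --- so by Lemma~\ref{thm:chiave} one gets $i_1(\phi_3)=m^+(\text{crossing form})-2=2-2=0$.
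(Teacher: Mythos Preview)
Your main argument is correct and coincides with the paper's proof: the paper simply observes that on $\mathit{SI}$ the spectrum of $\Lambda_3=JB_3$ lies off the imaginary axis, so that (implicitly via Proposition~\ref{prop:MaslovAbbo}, which is invoked explicitly in the very next paragraph of the paper) the list of Krein-positive purely imaginary eigenvalues is empty and $i_1(\phi_3)=0$. Your cross-check through the single crossing at $\tau=0$, the signature-zero computation for $B_3$, and Lemma~\ref{thm:chiave} is a valid independent verification that the paper does not give.
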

	
	\begin{proof}
		The eigenvalues that contribute to the Maslov index are only the ones contained in $\U$. %\newline
		If $9(\alpha - 2)^2 - \beta(\alpha + 2)^2 < 0$ (\ie in the region $\mathit{SI}$) the spectrum is contained in $\C\setminus(\U \cup \R)$ and the result follows.
	\end{proof}
	
	The monodromy matrix $M_3 \= \phi_3(2\pi) \= \exp(2\pi \Lambda_3)$ is non-degenerate in the whole region $\mathit{LS}$ of linear stability, except on the curve of equation 
		\begin{equation} \label{eq:degeneratecurve}
			\beta = \dfrac{36(1 - \alpha)}{(\alpha + 2)^2},
		\end{equation}
	where two of the four eigenvalues are equal to $1$. On the stability curve $\mathit{SS}$ of equation
		\[
			\beta = 9 \left( \frac{\alpha - 2}{\alpha + 2} \right)^2,
		\]
	instead, $M_3$ is non-degenerate but not diagonalisable. We can compute its Maslov index in the non-degenerate subzone of $\mathit{LS}$ by using again the formula of
	Proposition~\ref{prop:MaslovAbbo}: the Krein-positive eigenvalues of $\Lambda_3$ are
		\begin{align*}
			\lambda_1^- & = -\frac{1}{6} i \sqrt{36 - 18\alpha + 6 \sqrt{9(\alpha - 2)^2 - \beta(\alpha + 2)^2}} \\
		\intertext{and}
			\lambda_2^+ & = \frac{1}{6} i \sqrt{36 - 18\alpha - 6 \sqrt{9(\alpha - 2)^2 - \beta(\alpha + 2)^2}}
		\end{align*}
	for all $(\beta, \alpha) \in \mathit{LS}$, so that
		\[
			i_1(\phi_3) = 
					\begin{cases}
						0 & \textup{if }\ \dfrac{36(1 - \alpha)}{(\alpha + 2)^2} < \beta < 9 \dfrac{(\alpha - 2)^2}{(\alpha + 2)^2} \\[10pt]
						2 & \textup{if }\ 0 < \beta < \dfrac{36(1 - \alpha)}{(\alpha + 2)^2}.
					\end{cases}
		\]
	However, since the Maslov index is a lower semicontinuous function, we conclude that $i_1(\phi_3) = 0$ also on the curve~\eqref{eq:degeneratecurve} and on the stability curve:
		\[
			i_1(\phi_3) = 
					\begin{cases}
						0 & \textup{if }\ \beta \geq \dfrac{36(1 - \alpha)}{(\alpha + 2)^2} \\[10pt]
						2 & \textup{if }\ 0 < \beta < \dfrac{36(1 - \alpha)}{(\alpha + 2)^2}.
					\end{cases}
		\]
	The result is depicted in Figure~\ref{fig:i1phi3}.
	
	\begin{figure}[tb]
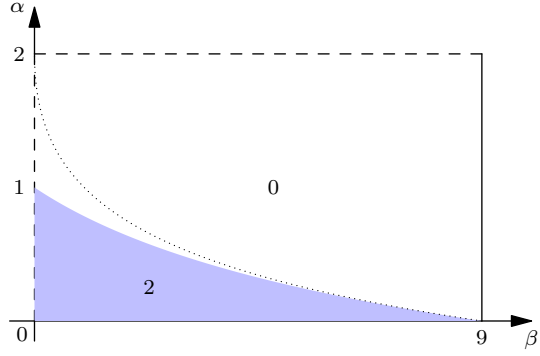

		\centering
			\begin{asy}
				import graph;

				size(200, 200*2/3, IgnoreAspect);
		
				real x1(real t) {return 9*(t - 2)^2/(t + 2)^2;}		// Curva di stabilità
				real x2(real t) {return 36*(1 - t)/(t + 2)^2;}
				real y(real t) {return t;}					// Serve solo per la parametrizzazione
		
			///	Assi coordinati
				xaxis(xmin = -0.5, xmax = 10, arrow=EndArrow);
				yaxis(ymin = 0, ymax=2, dashed);
				
			///	Riempimenti
				path q = buildcycle(graph(x2, y, 0, 1, operator ..), (0,1)--(0,0)--(9,0));
				fill(q, paleblue);
				
			///	Disegno curve
				// yequals(1, xmin=0, xmax=9, dotted);
				yequals(2, xmin=0, xmax=9, dashed);
				xequals(9, ymin=0, ymax=2);
				draw(graph(x1, y, 0, 2, operator ..), dotted, "$\beta = 9 \left( \frac{\alpha - 2}{\alpha + 2} \right)^2$");
				// draw(graph(x2, y, 0, 1, operator ..), "$\beta = \frac{36(1 - \alpha)}{(\alpha + 2)^2}$");
				draw((0,-0.15)--(0,0));
				draw((0,2)--(0,2.35), arrow = EndArrow);
		
			///	Etichette
				label("$\scriptstyle 0$", (0, 0), SW);
				label("$\scriptstyle 9$", (9, 0), S);
				label("$\scriptstyle \beta$", (10, 0), S);
				label("$\scriptstyle 1$", (0, 1), W);
				label("$\scriptstyle 2$", (0, 2), W);
				label("$\scriptstyle \alpha$", (0, 2.35), W);
				
				label("$\scriptstyle 0$", (4.5, 1), E);
				label("$\scriptstyle 2$", (2, 1/4), E);
		
			///	Legenda
			//	add(legend(invisible), point(S), 40S, UnFill);
			\end{asy}
			\caption{Values of $i_1(\phi_3).$ The dotted curve is the stability curve.} \label{fig:i1phi3}
	\end{figure}

	\subsection{Computation of the $\omega$-index on $E_3$} \label{subsec:omegaE3}
	
	The monodromy matrix $M_3 \= \exp(2\pi \Lambda_3)$ is similar to the diagonal matrix
		\[
			\diag(e^{2\pi \lambda_1^-}, e^{2\pi \lambda_2^+}, e^{2\pi \lambda_1^+}, e^{2\pi \lambda_2^-})
		\]
	and can consequently be expressed as
		\[
			M_3 = R(\theta_{\alpha, \beta}^{(1)}) \diamond R(\theta_{\alpha, \beta}^{(2)}),
		\]
	with $\theta_{\alpha, \beta}^{(1)} \= \Im(2\pi \lambda_1^+)$ and $\theta_{\alpha, \beta}^{(2)} \= \Im(2\pi \lambda_2^-)$.
	
	\begin{rmk}
		Note that these two angles correspond to the Krein-negative eigenvalues; the reason is the following. When $\beta \to 0$ the dynamics of the problem reduces to that of a generalised
		Kepler problem, \ie to the restriction to $E_2$ previously analysed. The values of the $\omega$-index must then agree with the ones found in the previous study when
		approaching the segment $\{0\} \times [0, 2)$ as $\beta$ tends to $0$, and this forces the choice of the two eigenvalues.
	\end{rmk}
	
	Observe that in the region $\mathit{LS}$ these angles take the following values (modulo $2\pi$):
		\begin{gather}
			\theta_{\alpha, \beta}^{(1)} \in \begin{cases}
										\{ 0 \} & \text{if $\beta = \dfrac{36(1 - \alpha)}{(\alpha + 2)^2}$} \\[10pt]
										(0, \pi) & \text{if $\beta < \dfrac{36(1 - \alpha)}{(\alpha + 2)^2}$ or
																	\bigg( $\beta > \dfrac{9(7 - 4\alpha)}{4(\alpha + 2)^2}$ and $\alpha > \dfrac{3}{2}$ \bigg)} \\[10pt]
										\{ \pi \} & \text{if $\beta = \dfrac{9(7 - 4\alpha)}{4(\alpha + 2)^2}$ and $\alpha > \dfrac{3}{2}$} \\[10pt]
																							%$\sqrt{4 - 2\alpha + \abs{3 - 2\alpha}} = 1$} \\[10pt]
										(\pi, 2\pi) & \text{if $\beta > \dfrac{36(1 - \alpha)}{(\alpha + 2)^2}$ and
																	\bigg( $\beta < \dfrac{9(7 - 4\alpha)}{4(\alpha + 2)^2}$ or $\alpha < \dfrac{3}{2}$ \bigg)}
									\end{cases} \label{eq:theta1} \\[10pt]
			\theta_{\alpha, \beta}^{(2)} \in \begin{cases}
										(0, \pi) & \text{if $\beta > \dfrac{9(7 - 4\alpha)}{4(\alpha + 2)^2}$ and $\alpha < \dfrac{3}{2}$} \\[10pt]
										\{ \pi \} & \text{if $\beta = \dfrac{9(7 - 4\alpha)}{4(\alpha + 2)^2}$ and $\alpha < \dfrac{3}{2}$} \\[10pt]
										(\pi, 2\pi) & \text{if $\beta < \dfrac{9(7 - 4\alpha)}{4(\alpha + 2)^2}$ or $\alpha > \dfrac{3}{2}$}.
									\end{cases} \label{eq:theta2}
		\end{gather}
	Figure~\ref{fig:theta1} and Figure~\ref{fig:theta2} show the involved regions, and they are superposed in Figure~\ref{fig:theta1theta2}. 
	In order to compute the splitting numbers and eventually find the $\omega$-index we have to determine not only the absolute position of $\theta_{\alpha, \beta}^{(1)}$ and
	$\theta_{\alpha, \beta}^{(2)}$ on $\U$ (which is the one given above), but also how their \emph{relative} position changes as the parameters $\alpha$ and $\beta$ vary. This is represented in
	Figure~\ref{fig:theta1theta2relpos}.
	
	\begin{figure}[tb]
	\centering
	\subfloat[][\emph{Dashed-dotted line:} $\theta_{\alpha, \beta}^{(1)} = 0$; \emph{Solid line:} $\theta_{\alpha, \beta}^{(1)} = \pi$;
				\emph{Light shade:} $\theta_{\alpha, \beta}^{(1)} \in (0, \pi)$, \emph{Dark shade:} $\theta_{\alpha, \beta}^{(1)} \in (\pi, 2\pi)$.\label{fig:theta1}]
					{\includegraphics[width=0.45\textwidth]{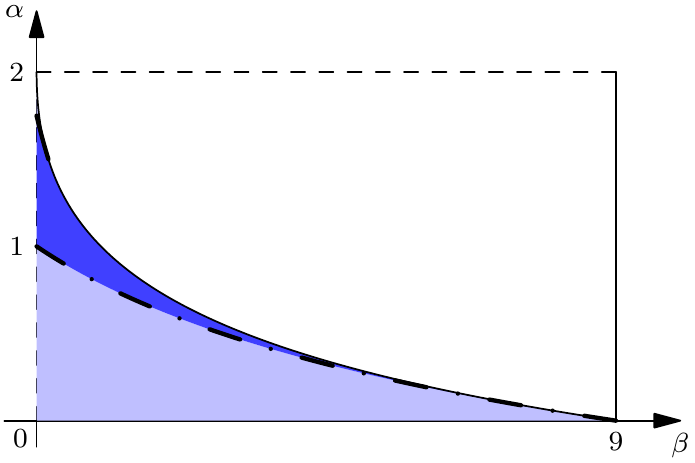}} \qquad
	\subfloat[][\emph{Solid line:} $\theta_{\alpha, \beta}^{(2)} = \pi$; \emph{Light shade:} $\theta_{\alpha, \beta}^{(2)} \in (0, \pi)$;
				\emph{Dark shade:} $\theta_{\alpha, \beta}^{(2)} \in (\pi, 2\pi)$. \label{fig:theta2}]{\includegraphics[width=0.45\textwidth]{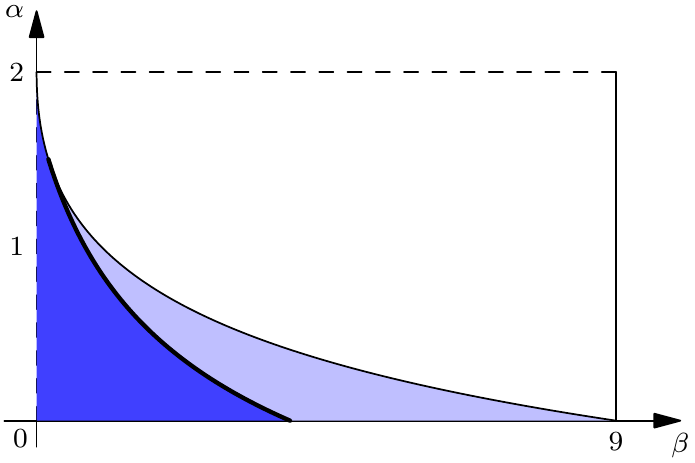}}
	\caption{Values of $\theta_{\alpha, \beta}^{(1)}$ (a) and $\theta_{\alpha, \beta}^{(2)}$ (b) modulo $2\pi$.}
	\end{figure}

	\begin{figure}[tb]
	\centering
	\subfloat[][\emph{Light shade:} $\theta_{\alpha, \beta}^{(1)} \in (0, \pi)$ and $\theta_{\alpha, \beta}^{(2)} \in (\pi, 2\pi)$;
				\emph{Medium shade:} $\theta_{\alpha, \beta}^{(1)}, \theta_{\alpha, \beta}^{(2)} \in (\pi, 2\pi)$;
				\emph{Heavy shade:} $\theta_{\alpha, \beta}^{(1)} \in (\pi, 2\pi)$ and $\theta_{\alpha, \beta}^{(2)} \in (0, \pi)$;
				\emph{Dark shade:} $\theta_{\alpha, \beta}^{(1)}, \theta_{\alpha, \beta}^{(2)} \in (0, \pi)$.\label{fig:theta1theta2}]
					{\includegraphics[width=0.45\textwidth]{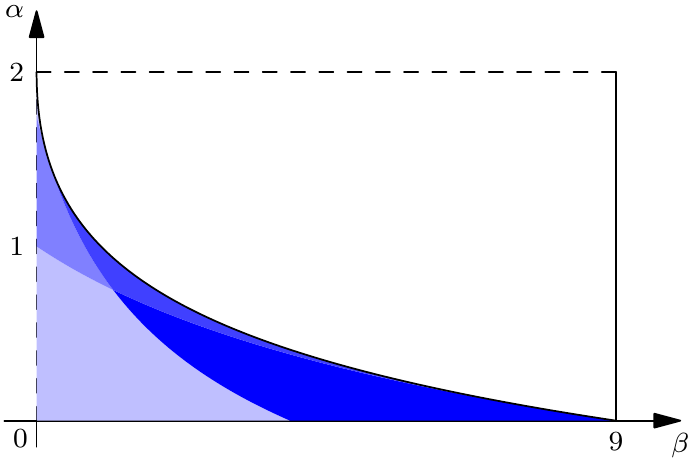}} \qquad
	\subfloat[][If $\widetilde{\theta}_{\alpha, \beta}^{(1)}$ and $\widetilde{\theta}_{\alpha, \beta}^{(2)}$ are the representatives of $\pm \theta_{\alpha, \beta}^{(1)}$ and
				$\pm \theta_{\alpha, \beta}^{(2)}$ in the upper unit semicircle, then the colours have to be interpreted in the following way.
				\emph{Light shade:} $\widetilde{\theta}_{\alpha, \beta}^{(1)} < \widetilde{\theta}_{\alpha, \beta}^{(2)}$;
				\emph{Solid line:} $\widetilde{\theta}_{\alpha, \beta}^{(1)} = \widetilde{\theta}_{\alpha, \beta}^{(2)}$;
				\emph{Dark shade:} $\widetilde{\theta}_{\alpha, \beta}^{(1)} > \widetilde{\theta}_{\alpha, \beta}^{(2)}$.\label{fig:theta1theta2relpos}]
						{\includegraphics[width=0.45\textwidth]{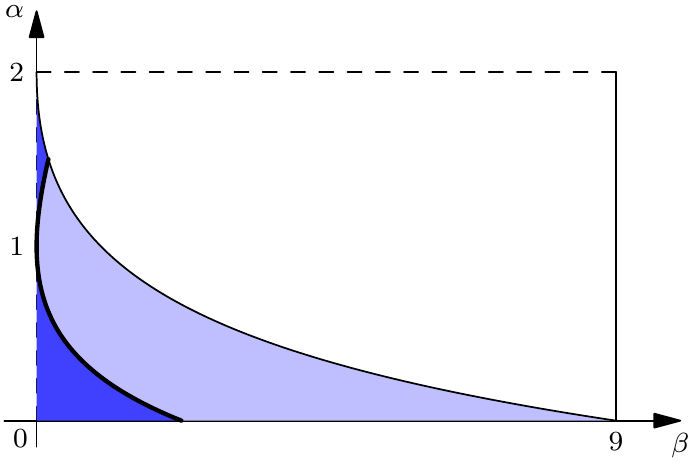}}
	\caption{Values of $\theta_{\alpha, \beta}^{(1)}$ and $\theta_{\alpha, \beta}^{(2)}$ (a) and their relative position (b) modulo $2\pi$.}
	\end{figure}

	Now, for every $\omega \in \U$ we have that
		\[
			S^\pm_{M_3}(\omega) = S^\pm_{R(\theta_{\alpha, \beta}^{(1)})}(\omega) + S^\pm_{R(\theta_{\alpha, \beta}^{(2)})}(\omega)
		\]
	and $S^\pm_{M_3}(\omega) = 0$ if $\omega \notin \sigma(M_3) = \{ e^{\pm i \theta_{\alpha, \beta}^{(1)}}, e^{\pm i \theta_{\alpha, \beta}^{(2)}} \}$. In order to compute the $\omega$-index
	we use the formula
		\[
			i_\omega(\phi_3) - i_1(\phi_3) = S_{M_3}^+(1) + \sum_{\omega_0} \bigl( S_{M_3}^+(\omega_0) - S_{M_3}^-(\omega_0) \bigr) - S_{M_3}^-(\omega),
		\]
	where $\omega \in \U \setminus \{ 1 \}$ is such that $\Im(\omega) \geq 0$ and $\omega_0 \in \sigma(M_3)$ lies in the interior of the arc of the upper unit semicircle connecting $1$ and
	$\omega$ (see Figure~\ref{fig:circle}). The splitting numbers involved are the following:
		\begin{align*}
			\big( S^+_{M_3}(1), S^-_{M_3}(1) \big) &= \begin{cases}
												(1, 1) & \text{if $\beta = \dfrac{36(1 - \alpha)}{(\alpha + 2)^2}$} \\[10pt]
												(0, 0) & \text{otherwise}
											\end{cases} \\[10pt]
			\big( S^+_{M_3}(-1), S^-_{M_3}(-1) \big) &= \begin{cases}
												(1, 1) & \text{if $\beta = \dfrac{9(7 - 4\alpha)}{4(\alpha + 2)^2}$ and $\alpha \neq \dfrac{3}{2}$} \\[10pt]
												(0, 0) & \text{otherwise}
											\end{cases} \\[10pt]
			\big( S^+_{M_3}(e^{i \theta_{\alpha, \beta}^{(1)}}), S^-_{M_3}(e^{i \theta_{\alpha, \beta}^{(1)}}) \big) &= \begin{cases}
																		(0, 1) & \text{for all $\theta_{\alpha, \beta}^{(1)} \notin \{0, \pi, \pm\theta_{\alpha, \beta}^{(2)}\}$} \\
																		(0, 2) & \text{if $\theta_{\alpha, \beta}^{(1)} = \theta_{\alpha, \beta}^{(2)}$} \\
																		(1, 1) & \text{if $\theta_{\alpha, \beta}^{(1)} = -\theta_{\alpha, \beta}^{(2)}$}
																		\end{cases} \\[10pt]
			\big( S^+_{M_3}(e^{i \theta_{\alpha, \beta}^{(2)}}), S^-_{M_3}(e^{i \theta_{\alpha, \beta}^{(2)}}) \big) &= \begin{cases}
																		(0, 1) & \text{for all $\theta_{\alpha, \beta}^{(2)} \notin \{0, \pi, \pm\theta_{\alpha, \beta}^{(1)}\}$} \\
																		(0, 2) & \text{if $\theta_{\alpha, \beta}^{(2)} = \theta_{\alpha, \beta}^{(1)}$} \\
																		(1, 1) & \text{if $\theta_{\alpha, \beta}^{(2)} = -\theta_{\alpha, \beta}^{(1)}$}
																		\end{cases}
		\end{align*}
		
	The $\omega$-index depends therefore on the values of $\alpha$ and $\beta$. Writing $\omega \= e^{i\theta}$, we have
		\begin{enumerate}[i)]
			\item $\beta > \dfrac{9(7 - 4\alpha)}{4(\alpha + 2)^2}$ and $\alpha > \dfrac{3}{2}$:
					\[
						i_\omega(\phi_3) = \begin{cases}
											0 & \text{if $0 < \theta \leq -\theta_{\alpha, \beta}^{(2)}$} \\
											1 & \text{if $-\theta_{\alpha, \beta}^{(2)} < \theta < \theta_{\alpha, \beta}^{(1)}$} \\
											0 & \text{if $\theta_{\alpha, \beta}^{(1)} \leq \theta \leq \pi$}
										\end{cases}
					\]
			\item $\beta = \dfrac{9(7 - 4\alpha)}{4(\alpha + 2)^2}$ and $\alpha > \dfrac{3}{2}$:
					\[
						i_\omega(\phi_3) = \begin{cases}
											0 & \text{if $0 < \theta \leq -\theta_{\alpha, \beta}^{(2)}$} \\
											1 & \text{if $-\theta_{\alpha, \beta}^{(2)} < \theta < \pi$} \\
											0 & \text{if $\theta = \pi$}
										\end{cases}
					\]
			\item $\beta < \dfrac{9(7 - 4\alpha)}{4(\alpha + 2)^2}$ and $\beta < 9 \dfrac{(\alpha - 1)^2}{(\alpha + 2)^2}$ and $\alpha > 1$:
					\[
						i_\omega(\phi_3) = \begin{cases}
											0 & \text{if $0 < \theta \leq -\theta_{\alpha, \beta}^{(2)}$} \\
											1 & \text{if $-\theta_{\alpha, \beta}^{(2)} < \theta \leq -\theta_{\alpha, \beta}^{(1)}$} \\
											2 & \text{if $-\theta_{\alpha, \beta}^{(1)} < \theta \leq \pi$}
										\end{cases}
					\]
			\item $\beta = 9 \dfrac{(\alpha - 1)^2}{(\alpha + 2)^2}$ and $\alpha > 1$:
					\[
						i_\omega(\phi_3) = \begin{cases}
											0 & \text{if $0 < \theta \leq \theta_{\alpha, \beta}^{(2)} = \theta_{\alpha, \beta}^{(1)}$} \\
											2 & \text{if $\theta_{\alpha, \beta}^{(2)} = \theta_{\alpha, \beta}^{(1)} < \theta \leq \pi$}
										\end{cases}
					\]
			\item $\beta < \dfrac{9(7 - 4\alpha)}{4(\alpha + 2)^2}$ and $\beta > 9 \dfrac{(\alpha - 1)^2}{(\alpha + 2)^2}$ and $\beta > \dfrac{36(1 - \alpha)}{4(\alpha + 2)^2}$:
					\[
						i_\omega(\phi_3) = \begin{cases}
											0 & \text{if $0 < \theta < -\theta_{\alpha, \beta}^{(1)}$} \\
											1 & \text{if $-\theta_{\alpha, \beta}^{(1)} < \theta \leq -\theta_{\alpha, \beta}^{(2)}$} \\
											2 & \text{if $-\theta_{\alpha, \beta}^{(2)} < \theta \leq \pi$}
										\end{cases}
					\]
			\item $\beta = \dfrac{9(7 - 4\alpha)}{4(\alpha + 2)^2}$ and $\beta > \dfrac{36(1 - \alpha)}{(\alpha + 2)^2}$ and $\alpha < \dfrac{3}{2}$:
					\[
						i_\omega(\phi_3) = \begin{cases}
											0 & \text{if $0 < \theta \leq -\theta_{\alpha, \beta}^{(1)}$} \\
											1 & \text{if $-\theta_{\alpha, \beta}^{(1)} < \theta < \pi$} \\
											0 & \text{if $\theta = \pi$}
										\end{cases}
					\]
			\item $\beta > \dfrac{9(7 - 4\alpha)}{4(\alpha + 2)^2}$ and $\beta > \dfrac{36(1 - \alpha)}{(\alpha + 2)^2}$:
					\[
						i_\omega(\phi_3) = \begin{cases}
											0 & \text{if $0 < \theta \leq -\theta_{\alpha, \beta}^{(1)}$} \\
											1 & \text{if $\theta_{\alpha, \beta}^{(1)} < \theta < \theta_{\alpha, \beta}^{(2)}$} \\
											0 & \text{if $\theta_{\alpha, \beta}^{(2)} \leq \theta \leq \pi$}
										\end{cases}
					\]
			\item $\beta = \dfrac{36(1 - \alpha)}{(\alpha + 2)^2}$ and $\beta < \dfrac{9(7 - 4\alpha)}{4(\alpha + 2)^2}$:
					\[
						i_\omega(\phi_3) = \begin{cases}
											1 & \text{if $0 < \theta \leq -\theta_{\alpha, \beta}^{(2)}$} \\
											2 & \text{if $-\theta_{\alpha, \beta}^{(2)} < \theta \leq \pi$}
										\end{cases}
					\]
			\item $\beta = \dfrac{36(1 - \alpha)}{(\alpha + 2)^2}$ and $\beta = \dfrac{9(7 - 4\alpha)}{4(\alpha + 2)^2}$:
					\[
						i_\omega(\phi_3) = \begin{cases}
											1 & \text{if $0 < \theta < \pi$} \\
											0 & \text{if $\theta = \pi$}
										\end{cases}
					\]
			\item $\beta = \dfrac{36(1 - \alpha)}{(\alpha + 2)^2}$ and $\beta > \dfrac{9(7 - 4\alpha)}{4(\alpha + 2)^2}$:
					\[
						i_\omega(\phi_3) = \begin{cases}
											1 & \text{if $0 < \theta < \theta_{\alpha, \beta}^{(2)}$} \\
											0 & \text{if $\theta_{\alpha, \beta}^{(2)} \leq \theta \leq \pi$}
										\end{cases}
					\]
			\item $\beta < 9 \dfrac{(\alpha - 1)^2}{(\alpha + 2)^2}$ and $\alpha < 1$:
					\[
						i_\omega(\phi_3) = \begin{cases}
											2 & \text{if $0 < \theta \leq -\theta_{\alpha, \beta}^{(2)}$} \\
											3 & \text{if $-\theta_{\alpha, \beta}^{(2)} < \theta < \theta_{\alpha, \beta}^{(1)}$} \\
											2 & \text{if $-\theta_{\alpha, \beta}^{(1)} \leq \theta \leq \pi$}
										\end{cases}
					\]
			\item $\beta = 9 \dfrac{(\alpha - 1)^2}{(\alpha + 2)^2}$ and $\alpha < 1$:
					\[
						i_\omega(\phi_3) = \begin{cases}
											2 & \text{if $\theta \neq \theta_{\alpha, \beta}^{(1)} = -\theta_{\alpha, \beta}^{(2)}$} \\
											1 & \text{if $\theta = \theta_{\alpha, \beta}^{(1)} = -\theta_{\alpha, \beta}^{(2)}$}
										\end{cases}
					\]
			\item $\beta > 9 \dfrac{(\alpha - 1)^2}{(\alpha + 2)^2}$ and $\beta < \dfrac{36(1 - \alpha)}{(\alpha + 2)^2}$ and $\beta < \dfrac{9(7 - 4\alpha)}{4(\alpha + 2)^2}$:
					\[
						i_\omega(\phi_3) = \begin{cases}
											2 & \text{if $0 < \theta < \theta_{\alpha, \beta}^{(1)}$} \\
											1 & \text{if $\theta_{\alpha, \beta}^{(1)} \leq \theta \leq -\theta_{\alpha, \beta}^{(2)}$} \\
											2 & \text{if $-\theta_{\alpha, \beta}^{(2)} < \theta \leq \pi$}
										\end{cases}
					\]
			\item $\beta < \dfrac{36(1 - \alpha)}{(\alpha + 2)^2}$ and $\beta = \dfrac{9(7 - 4\alpha)}{4(\alpha + 2)^2}$:
					\[
						i_\omega(\phi_3) = \begin{cases}
											2 & \text{if $0 < \theta < \theta_{\alpha, \beta}^{(1)}$} \\
											1 & \text{if $\theta_{\alpha, \beta}^{(1)} \leq \theta < \pi$} \\
											0 & \text{if $\theta = \pi$}
										\end{cases}
					\]
			\item $\beta < \dfrac{36(1 - \alpha)}{(\alpha + 2)^2}$ and $\beta > \dfrac{9(7 - 4\alpha)}{4(\alpha + 2)^2}$:
					\[
						i_\omega(\phi_3) = \begin{cases}
											2 & \text{if $0 < \theta < \theta_{\alpha, \beta}^{(1)}$} \\
											1 & \text{if $\theta_{\alpha, \beta}^{(1)} \leq \theta < \theta_{\alpha, \beta}^{(2)}$} \\
											0 & \text{if $\theta_{\alpha, \beta}^{(2)} \leq \theta \leq \pi$}
										\end{cases}
					\]
		\end{enumerate}
	
%	\begin{figure}[tb]
%		\centering
%			\begin{asy}
%				import graph;
%		
%				size(200, 200*2/3, IgnoreAspect);
%		
%				real x1(real t) {return 9*(t - 2)^2/(t + 2)^2;}		// Curva di stabilità
%				real x2(real t) {return 36*(1 - t)/(t + 2)^2;}
%				real x3(real t) {return 9/4*(7 - 4*t)/(t + 2)^2;}
%				real y(real t) {return t;}					// Serve solo per la parametrizzazione
%		
%			///	Assi coordinati
%				xaxis(xmin = -0.5, xmax = 10, arrow=EndArrow);
%				yaxis(ymin = 0, ymax=2, dashed);
%				
%			///	Riempimenti
%				path p = buildcycle(graph(x3, y, 0, 7/4, operator ..), (0,7/4)--(0,0)--(63/16,0));
%				fill(p, paleblue);
%				
%			///	Disegno curve
%				yequals(2, xmin=0, xmax=9, dashed);
%				xequals(9, ymin=0, ymax=2);
%				draw(graph(x1, y, 0, 2, operator ..), dotted, "$\beta = 9 \left( \frac{\alpha - 2}{\alpha + 2} \right)^2$");
%				draw((0,-0.15)--(0,0));
%				draw((0,2)--(0,2.35), arrow = EndArrow);
%		
%			///	Etichette
%				label("$\scriptstyle 0$", (0, 0), SW);
%				label("$\scriptstyle 9$", (9, 0), S);
%				label("$\scriptstyle \beta$", (10, 0), S);
%				label("$\scriptstyle 7/4$", (0, 7/4), W);
%				label("$\scriptstyle 2$", (0, 2), W);
%				label("$\scriptstyle \alpha$", (0, 2.35), W);
%				
%				label("$\scriptstyle 0$", (4.5, 1), E);
%				label("$\scriptstyle 2$", (1, 1/3), W);
%			\end{asy}
%			\caption{Values of $i_{-1}(\phi_3)$. The dotted curve is the stability curve.} \label{fig:i-1phi3}
%	\end{figure}

	\begin{figure}[tb]
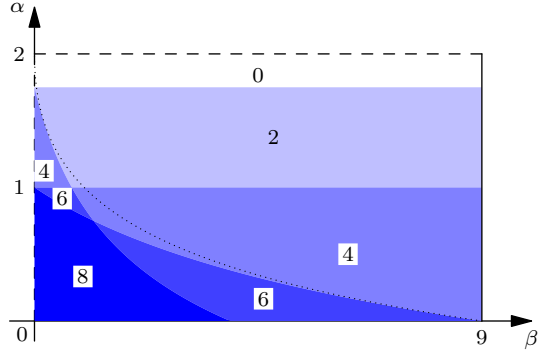

		\centering
			\begin{asy}
				import graph;

				size(200, 200*2/3, IgnoreAspect);
		
				real x1(real t) {return 9*(t - 2)^2/(t + 2)^2;}		// Curva di stabilità
				real x2(real t) {return 36*(1 - t)/(t + 2)^2;}
				real x3(real t) {return 9/4*(7 - 4*t)/(t + 2)^2;}
				real y(real t) {return t;}					// Serve solo per la parametrizzazione
		
			///	Assi coordinati
				xaxis(xmin = -0.5, xmax = 10, arrow=EndArrow);
				yaxis(ymin = 0, ymax=2, dashed);
				
			///	Riempimenti
				path p = buildcycle(graph(x3, y, 1, 7/4, operator ..), (0,7/4)--(9,7/4)--(9,1)--(3/4,1));
				fill(p, paleblue);
				path q = buildcycle((9,0)--(9,1)--(3/4,1), graph(x3, y, 1, 7/4, operator ..), (0,7/4)--(0,1)--(3/4,1), graph(x3, y, 1, 3/4, operator ..),
						graph(x2, y, 3/4, 0, operator ..));
				fill(q, lightblue);
				path r = buildcycle(graph(x2, y, 0, 3/4, operator ..), graph(x3, y, 3/4, 1, operator ..), (3/4,1)--(0,1), graph(x2, y, 1, 3/4, operator ..),
						graph(x3, y, 3/4, 0, operator ..), (63/16,0)--(9,0));
				fill(r, mediumblue);
				path s = buildcycle((0,0)--(63/16,0), graph(x3, y, 0, 3/4, operator ..), graph(x2, y, 3/4, 1, operator ..), (0,1)--(0,0));
				fill(s, blue);
				
			///	Disegno curve
				// yequals(1, xmin=0, xmax=9, dotted);
				yequals(2, xmin=0, xmax=9, dashed);
				xequals(9, ymin=0, ymax=2);
				draw(graph(x1, y, 0, 2, operator ..), dotted, "$\beta = 9 \left( \frac{\alpha - 2}{\alpha + 2} \right)^2$");
				draw((0,-0.15)--(0,0));
				draw((0,2)--(0,2.35), arrow = EndArrow);
		
			///	Etichette
				label("$\scriptstyle 0$", (0, 0), SW);
				label("$\scriptstyle 9$", (9, 0), S);
				label("$\scriptstyle \beta$", (10, 0), S);
				label("$\scriptstyle 1$", (0, 1), W);
				label("$\scriptstyle 2$", (0, 2), W);
				label("$\scriptstyle \alpha$", (0, 2.35), W);
				
				label("$\scriptstyle 0$", (4.5, 1.965), S);
				label("$\scriptstyle 2$", (4.5, 11/8), E);
				label("$\scriptstyle 4$", (1/3+1/6, 1+1/8), W, UnFill);
				label("$\scriptstyle 4$", (6,1/2), E, UnFill);
				label("$\scriptstyle 6$", (4.3, 1/6), E, UnFill);
				label("$\scriptstyle 6$", (1/4,11/12), E, UnFill);
				label("$\scriptstyle 8$", (2/3,1/3), E, UnFill);
			\end{asy}
			\caption{Values of the Maslov index $i_1(\phi_3^2)$ of the second iteration of $\phi_3$. The dotted curve is the stability curve.}
			\label{fig:i1phitot2}
	\end{figure}
		
	As we did analogously for $E_2$, we now turn our attention to the computation of the Maslov index $i_1(\phi_3^k)$ of the iterates of $\phi_3$.
	Once again we have that the Maslov index jumps in correspondence of those $\omega$ that are roots of unity, due to the structure of Bott-Long formula.	
	Hence, in the region $\mathit{LS}$, there are jumps of the index of the $k$-th iterate if and only if
		\begin{equation} \label{eq:condE3}
			\theta_{\alpha, \beta}^{(i)} = \frac{2l\pi}{k},
		\end{equation}
	for some $i = 1, 2$ and $l \in \N \setminus \{0\}$ (here $\theta_{\alpha, \beta}^{(i)}$ are the angles defined in \eqref{eq:theta1} and \eqref{eq:theta2}). In actual fact $\theta_{\alpha, \beta}^{(2)}$
	ranges in $(0, 2\pi)$, whereas $\theta_{\alpha, \beta}^{(1)}$ varies in $(0, 2\sqrt{2}\pi)$: this implies that $l$ takes values in the finite set $\{ 1, \dots, [\sqrt{2}k] \}$.
	
	Condition~\eqref{eq:condE3} defines a family of curves $\{f_{k, l}\}$ in the plane $(\beta, \alpha)$, parameterised by $k$ and $l$, that are defined by the equations
		\[
			\beta = - \frac{36}{(\alpha + 2)^2} \frac{l^2}{k^2} \bigg( \frac{l^2}{k^2} + \alpha - 2 \bigg).
		\]
	Each of these curves is convex and for $l \in \{1, \dotsc, k \}$ they are tangent at exactly one point to $\mathit{SS}$, namely
		\begin{equation} \label{eq:tangpoint}
			\bigg( \frac{9 l^4}{(2k^2 - l^2)^2},\ 2 \bigg( 1 - \frac{l^2}{k^2} \bigg) \bigg),
		\end{equation}
	and it turns out that the stability curve is actually the envelope of the one-parameter family $\{ f_t \}_{t \in (0, 1]}$ consisting of curves of equations
		\[
			\beta = - \frac{36}{(\alpha + 2)^2} t^2 (t^2 + \alpha - 2),
		\]
	into which the collection $\{f_{k, l}\}$ is contained. We observe that at every point in $\mathit{LS}$ the Maslov index $i_1(\phi_3^k)$ increases with $k$ and that, for each fixed
	$k \in \N \setminus \{0\}$, it decreases along half-lines from the origin. The index is also monotonically increasing when one crosses any of the curves $f_{k, l}$ (going towards the origin).
	Note that the intersections of these curves with the line $\beta = 0$ yield exactly the values of the sequence $(\alpha_{k, l})$ introduced in $E_2$ that tends to $\alpha = 2$ as $k \to +\infty$.
	
	In Figure~\ref{fig:i1phitot2} we present, as an example, a complete computation of $i_1(\phi_3^2)$, whereas Figure~\vref{fig:iterates3} shows some of the curves $f_{k, l}$ for some values of
	$k$.

%	----------------------------------------------------------------------
	\section{The $\omega$-Morse index of the Lagrangian circular orbit}
%	----------------------------------------------------------------------

%	In this last section we compute the $\omega$-Morse indices of the circular solution as well as the Morse index of the generalised Kepler problem. The central ingredient needed to perform
%	such a computation is the Morse index theorem (Lemma~\ref{thm:indextheorem}) and the computations of the Maslov index performed in the previous sections.
	
	Let $\mathscr L \in \mathscr C^\infty(T\widehat X, \R)$ and $\mathbb A : W^{1,2}(\R/2\pi\Z, \widehat X) \to \R$ be the Lagrangian function and the Lagrangian action functional respectively, 		as given in \eqref{eq:Lagrangian} and \eqref{eq:action}.
%		\begin{equation}\label{eq:funzionale}
%			\mathbb A(\gamma) \= \int_0^{2\pi} \mathscr L\big( \gamma(t), \dot \gamma (t) \big) \, dt
%		\end{equation}
%	which is smooth on the collisionless loops.
	Since the Euler-Lagrange equation for $\mathbb A$, which is smooth on collisionless loops,
	coincides with the Newton's equations given in \eqref{eq:Newtonintro}, for each pair   
	$(\beta,\alpha) \in (0,9]\times [0,2) $ the Lagrangian circular solution 
	$\gamma_{\alpha,\beta}$ of Newton's equation can be found (up to a standard bootstrap argument) as a critical point of $\mathbb{A}$.
	
	From Equation~\eqref{eq:secondvariation} we see that the second variation at the critical point $\gamma_{\alpha,\beta}$ is 
	\begin{equation}\label{eq:secondvariationlagrange}
	\d^2 \mathbb A(\gamma_{\alpha,\beta})[\xi,\eta] = \int_0^{2\pi}
		 \langle M\xi', \eta'\rangle  + 
		 \langle D^2U\big( \gamma_{\alpha, \beta}(t) \big) \xi,  \eta \rangle \,dt. 
	\end{equation}
	Using the Sobolev Embedding Theorem it follows that the 
	second variation is a (bounded) essentially positive Fredholm quadratic form, 
	being a weakly compact perturbation of an invertible quadratic form 
	(cf.~for instance \cite[Section~2, Proposition~3.1]{MR2133393} and references therein). 
	This in particular ensures that the $\omega$-Morse index $\iMor^\omega$ is finite. 

	By taking into account the Morse index theorem (Lemma~\ref{thm:indextheorem}), in order to 
	compute the $\iMor^\omega(\gamma_{\beta,\alpha})$ it is enough to 
	compute the $\omega$-index $i_\omega(\psi)$, where $\psi: [0,2\pi] \to \Sp(8)$ is the 
	fundamental solution of the first-order Hamiltonian system obtained from the associated Sturm 
	system through the Legendre transformation, \ie 
	$\psi$ satisfies
	\begin{equation}\label{eq:nonautonomous}
	 \begin{cases}
	  \psi'(t)= J B_{\alpha,\beta}(t) \psi(t)\\
	  \psi(0)= I_{2n}
	 \end{cases}
	\end{equation}
	where 
	\[
	 B_{\alpha,\beta}(t) \= \begin{pmatrix}
	         M & 0\\
	         0 & -D^2U\big( \gamma_{\alpha, \beta}(t) \big)
	        \end{pmatrix}.
	\]
	Taking into account \cite[Theorem 2.1]{MR2145251} there exists a linear symplectomorphism 
	between $T^*\widehat X$ and $E_2 \oplus E_3$. By the symplectic invariance of $\iclm$ 
	(cf.~\cite[Property~V, page~128]{MR1263126}) and hence of $i_\omega$ (as a direct 
	consequence of Lemma~\ref{thm:chiave}), it follows that 
	\[
	 i_\omega(\psi)=i_\omega(\Phi),
	\]
	where $\Phi$ was defined in Section~\ref{sec:linearstability}. Since 
	$\Phi= \phi_2\diamond \phi_3$, by using the symplectic additivity 
	property of $i_\omega$ and considering the previous discussion it follows that 
	\[
	 \iMor^\omega(\gamma_{\alpha, \beta})= i_\omega(\phi_2)+ i_\omega(\phi_3).
	\]
	\begin{rmk}
	We assume that  $H$ is a Hilbert space and there exist $H_1,\dots, H_n$ such that
	$H= \bigoplus_{k=1}^n H_k$. Let $A$ be a self-adjoint essentially positive bounded 
	Fredholm operator such that $A(H_k) \subseteq H_k$ for $i=1, \dots, n$. Setting 
	$A_k\=A|_{H_k}$ we have  
	\[
	 \iMor^\omega(A)= \sum_{k=1}^n \iMor^\omega(A_k).
	\]
	\end{rmk}
	It is worth noting that in correspondence of the $4$-dimensional subspaces 
	$E_2$ and $E_3$ there exist two $2$-dimensional subspaces 
	$\widehat X_2$ and $\widehat X_3$ of $\widehat{X}$ such that  $E_2=T^*\widehat X_2$ and  
	$E_3=T^*\widehat X_3$. Hence
	\[
	 W^{1,2}(\R/2\pi\Z, \widehat X) = W^{1,2}(\R/2\pi\Z, \widehat X_2)
	 \times  W^{1,2}(\R/2\pi\Z, \widehat X_3).
	\]
	In the next two subsections we shall compute the Lagrangian functions on the 
	aforementioned subspaces $\widehat X_2$ and $\widehat X_3$ as well as the 
	differential operators on such subspaces.

        \subsection{$\omega$-Morse index of the generalised Kepler problem}

%	In this Subsection we shall compute the Morse index of the circular Lagrangian  
%	solution on the invariant subspace $W^{1,2}(\R/2\pi\Z, \widehat X_2)$.
       Define the Lagrangian function on $W^{1,2}(\R/2\pi\Z, \widehat X_2)$ as
		\begin{equation}\label{eq:Lagrangiana2}
			\L_2(x, \dot{x}) \= \frac{1}{2} \norm{\dot{x}}^2 + \langle J x, 
			\dot{x} \rangle + \frac{1}{2} \langle S_2 x, x \rangle,
		\end{equation}
	where $S_2 \= \big( \begin{smallmatrix} \alpha + 2 & 0 \\ 0 & 0 \end{smallmatrix} \big)$. By a straightforward 
	calculation it follows that the origin in the configuration space is a solution of the 
	corresponding Euler-Lagrange equation 
	\begin{equation}\label{eq:ELsystem2}
	 -\ddot x - 2 J \dot x + S_2 x=0;
	\end{equation}	
	associated with $\mathscr L_2$. Let $\mathcal{B}_2 : W^{1, 2}(\R/2\pi\Z, \hat{X}_2) \times W^{1, 2}(\R/2\pi\Z, \hat{X}_2) \to \R$ be defined as follows:
	\[
	\mathcal{B}_2(x, y) \= \int_0^{2\pi}\big[ \langle \dot x ,\dot y\rangle + \langle J y, \dot x\rangle + 
	 \langle J \dot x, y\rangle + \langle S_2 x, y\rangle\big] \, dt.
	\]
	Once again it follows from the Sobolev Embedding Theorem that $\mathcal{B}_2$ is a (bounded) 
	essentially positive Fredholm quadratic form, being a weakly compact perturbation of 
	an invertible quadratic form. 
	This in particular ensures that the Morse index $\iMor^\omega$ is finite. 

	By taking into account the Legendre transformation, the corresponding autonomous Hamiltonian function is
		\[
			\H_2(v) \= \frac{1}{2} \langle B_2 v, v \rangle, \qquad \forall\, v \in \R^4,
		\]
	where 
	\begin{equation}\label{eq:B2}
			B_2 \= \begin{pmatrix}
					1 & 0 & 0 & 1 \\
					0 & 1 & -1 & 0 \\
					0 & -1 & -(\alpha + 1) & 0 \\
					1 & 0 & 0 & 1
				\end{pmatrix}.
		\end{equation}
	Clearly the origin in the phase space is the corresponding solution of the 
	linear autonomous Hamiltonian initial value problem
		\begin{equation} \label{eq:IVP11}
			\begin{cases}
				\phi'_2(\tau) = \Lambda_2 \phi_2(\tau) \\
				\phi_2(0) = I_4
			\end{cases}
		\end{equation}
		where $\Lambda_2 = JB_2$ agrees with the one given in formula \eqref{eq:Lambda_23}.
%%	\begin{rmk}	
%%	Let 
%%	\[
%%	\ell_2: 
%%	  \overline{D}(\omega, 2\pi) \subset L^2([0,2\pi], \C^2) 
%%	\to L^2([0,2\pi], \C^2)
%%	\]
%%	be the  differential operator on $L^2$ with 
%%	dense domain $ \overline{D}(\omega, 2\pi)$ 
%%	 \[
%%	\overline{D}(\omega, 2\pi):= \Set{\xi \in W^{2,2}([0,2\pi], \C^2)|
%%	  \xi(2\pi)= \omega\, \xi(0),\  \xi'(2\pi)=\omega\, \xi'(0)}
%%	\]
%%	given by
%%	\[
%%	 \ell_2:= -\dfrac{d^2}{dt^2} - 2J\dfrac{d}{dt} + S_2.
%%	\]
%%	It can be proved that that $\ell_2$ is an
%%	unbounded self-adjoint Fredholm operator in $L^2$ with domain $\overline D(\omega, 2\pi)$. 
%%	\end{rmk}
	
%	As already observed in Section~\ref{sec:linearstability} the restriction of the Hamiltonian system on 
%	$E_2$ coincides with that of the generalised Kepler problem. Denoting by $\gamma_{\alpha, 0}$ its solution 
%	of the generalised Kepler problem (the  notation agrees with the fact that the Hamiltonian system on $E_2$ 
%	turns out to be obtained by that one on $E_3$ after imposing $\beta=0$, even though is physically 
%	meaningless). 
	
	\begin{thm}\label{thm:morseE2}
		For all $\omega \in \U$, the $\omega$-Morse index of the circular solution $\gamma_{\alpha, 0}$ of the generalised Kepler problem coincides with $i_\omega(\phi_2)$, which has been
		computed in Propositions~\ref{thm:morsekepler} and \ref{prop:omegaE2}.
	\end{thm}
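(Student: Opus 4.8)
The plan is to obtain the statement as a direct application of the Morse Index Theorem (Lemma~\ref{thm:indextheorem}) to the reduced Lagrangian $\L_2$ defined on $W^{1,2}(\R/2\pi\Z, \hat{X}_2)$, combined with the values of $i_\omega(\phi_2)$ already determined. First I would observe that $\L_2$ is a regular Lagrangian in the sense of conditions (L1)--(L2): indeed $\partial_{vv}\L_2 = I_2 > 0$ and, being quadratic, all of its second derivatives are bounded. Hence the associated action functional is of class $\mathscr{C}^2$, the origin of $\hat{X}_2$ is a critical point (it solves the Euler--Lagrange equation~\eqref{eq:ELsystem2}, as noted after~\eqref{eq:Lagrangiana2}), and the second variation at the origin is precisely the essentially positive Fredholm quadratic form $\mathcal{B}_2$; therefore $\iMor^\omega$ is finite and well defined for every $\omega \in \U$.

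The second step is the (short) verification that the Legendre transform of $\L_2$ is the autonomous quadratic Hamiltonian $\H_2(v) = \tfrac12 \langle B_2 v, v\rangle$ with $B_2$ as in~\eqref{eq:B2}, and hence that the linearisation of the corresponding Hamiltonian flow at the equilibrium sitting at the origin of the phase space is exactly the linear autonomous initial value problem~\eqref{eq:IVP11}, namely $\phi_2' = \Lambda_2 \phi_2$, $\phi_2(0) = I_4$, with $\Lambda_2 = J B_2$ agreeing with the matrix in~\eqref{eq:Lambda_23}. Thus $\phi_2$ is the fundamental solution attached, through the Legendre transformation, to the Sturm system obtained by linearising~\eqref{eq:ELsystem2}. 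Lemma~\ref{thm:indextheorem} then gives at once
\[
\iMor^\omega(\gamma_{\alpha,0}) = i_\omega(\phi_2), \qquad n_\omega(\gamma_{\alpha,0}) = \nu_\omega(\phi_2) \qquad \text{for all } \omega \in \U ,
\]
and inserting the value of $i_1(\phi_2)$ from Proposition~\ref{thm:morsekepler} together with the values of $i_\omega(\phi_2)$, $\omega \neq 1$, from Proposition~\ref{prop:omegaE2} yields the claim.

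The only delicate point --- which I regard as the main, albeit purely formal, obstacle --- is the identification of $\iMor^\omega(\gamma_{\alpha,0})$, the $\omega$-Morse index of the circular solution of the generalised Kepler problem in its natural loop space, with the $\omega$-Morse index of the origin for $\L_2$. This is the $\beta = 0$ instance of the reduction already performed for the full three-body action: one uses the linear symplectomorphism between $T^*\hat{X}_2$ and the invariant subspace $E_2$ induced by the change of coordinates of Proposition~\ref{prop:Lambdahom} (restricted to $E_2$), the symplectic invariance of the Cappell--Lee--Miller index $\iclm$ (\cite[Property~V, page~128]{MR1263126}) and the relations of Lemma~\ref{thm:chiave} between $i_\omega$ and $\iclm$, to transfer the $\omega$-index from the fundamental solution of the linearised Kepler system to $\phi_2$. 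Once this bookkeeping is in place, no genuinely new computation is needed beyond Lemma~\ref{thm:indextheorem} and the two already-established propositions.
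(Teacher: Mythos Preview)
Your proposal is correct and follows essentially the same route as the paper: identify the restriction of the Hamiltonian to the invariant subspace $E_2$ with the Hamiltonian of the generalised Kepler problem, observe that the Legendre transform of $\L_2$ gives the autonomous system~\eqref{eq:IVP11} with fundamental solution $\phi_2$, and then apply the Morse Index Theorem (Lemma~\ref{thm:indextheorem}). The paper's proof is considerably more terse---it simply records the invariance of $E_2$, the reduction to Kepler, and the Legendre correspondence---whereas you spell out the regularity conditions (L1)--(L2), the Fredholm property of $\mathcal{B}_2$, and the symplectic-invariance bookkeeping via $\iclm$ and Lemma~\ref{thm:chiave}; but no step differs in substance.
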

	
	\begin{proof}
		First of all we observe that as a direct consequence of 
		the results proved in Section~\ref{sec:description} the subspace $E_2$ is invariant under the phase 
		flow of the Hamiltonian~\eqref{eq:hamiltonian}. Moreover on this subspace the aforementioned 
		Hamiltonian reduces to the Hamiltonian of the generalised Kepler problem.
		Now, by the above construction System~\eqref{eq:IVP11} is the Legendre 
		transformation of the Euler\nobreakdash-Lagrange system~\eqref{eq:ELsystem2}.
		The thesis is then a direct consequence of Lemma~\ref{thm:indextheorem}. 
	\end{proof}

	\begin{rmk}
		 It is worthwhile noting that this result perfectly agrees with \cite[Proposition~3.6]{MR2563212} and \cite[Proposition~2.2.3]{Venturelli:phd}. Moreover we 
		 point out that in the last quoted reference the author only states that for $\alpha \in (0,1)$ the circular solutions are not local minimisers, without any further information on the 
		 Morse index. The logarithmic case has not been treated thus far from this point of view.
		\end{rmk}

	\subsection{$\omega$-Morse index of the Lagrangian circular orbit}
	
	We proceed exactly as in the previous subsection, by introducing the Lagrangian
		\[
			\L_3(x, \dot{x}) \= \frac{1}{2} \norm{\dot{x}}^2 + \langle J x, 
			\dot{x} \rangle + \frac{1}{2} \langle S_3 x, x \rangle
		\]
	on the Sobolev space $W^{1,2}(\R/2\pi\Z, \widehat X_3)$, with
		\[
			S_3 \= \begin{pmatrix}
					\frac{1}{6} \bigl[ 6 + 3\alpha + (\alpha + 2)\sqrt{\smash[b]{9 - \beta}} \bigr] & 0 \\
					0 & \frac{1}{6} \bigl[ 6 + 3\alpha - (\alpha + 2)\sqrt{\smash[b]{9 - \beta}} \bigr]
				\end{pmatrix}.
		\]
	Defining a symmetric bilinear form $\mathcal{B}_3$ in a completely analogous way as above, we obtain the Hamiltonian system
		\begin{equation} \label{eq:IVP12}
			\begin{cases}
				\phi'_3(\tau) = \Lambda_3 \phi_3(\tau) \\
				\phi_3(0) = I_4,
			\end{cases}
		\end{equation}
	where $\Lambda_3 = JB_3$, being
		\begin{equation} \label{eq:B3}
			B_3 \=\begin{pmatrix}
					1 & 0 & 0 & 1 \\
					0 & 1 & -1 & 0 \\
					0 & -1 & -\frac{1}{2} \Bigl( \alpha + \frac{\alpha+2}{3} \sqrt{\smash[b]{9 - \beta}} \Bigr) & 0 \\
					1 & 0 & 0 & - \frac{1}{2} \Big( \alpha - \frac{\alpha+2}{3} \sqrt{\smash[b]{9 - \beta}} \Big)
				\end{pmatrix}.
		\end{equation}
		
	\begin{thm}\label{thm:morseE3}
		For all $\omega \in \U$ the $\omega$-Morse index of the Lagrangian circular solution $\gamma_{\alpha, \beta}$ is given by $i_\omega(\Phi) = i_\omega(\phi_2) + i_\omega(\phi_3)$.
		In particular for $\omega = 1$ we have
		\[
			\iMor(\gamma_{\alpha, \beta}) = 
					\begin{cases}
						0 & \text{if $\alpha \in [1, 2)$} \\[10pt]
						2  & \text{if $\beta \geq \dfrac{36(1 - \alpha)}{(\alpha + 2)^2}$ and $\alpha \in [0,1)$} \\[10pt]
						4 & \text{if $0 < \beta < \dfrac{36(1 - \alpha)}{(\alpha + 2)^2}$}.
					\end{cases}
		\]
	\end{thm}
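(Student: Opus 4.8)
The plan is to assemble pieces that are already in place. The identity
\[
  \iMor^\omega(\gamma_{\alpha,\beta}) = i_\omega(\Phi) = i_\omega(\phi_2) + i_\omega(\phi_3), \qquad \forall\, \omega \in \U,
\]
is, in essence, a restatement of what was derived in the opening discussion of this section: the second variation \eqref{eq:secondvariationlagrange} is an essentially positive Fredholm quadratic form, so its $\omega$\nobreakdash-Morse index is finite; by the Morse Index Theorem (Lemma~\ref{thm:indextheorem}) it equals $i_\omega(\psi)$, where $\psi$ solves \eqref{eq:nonautonomous}; the linear symplectomorphism $T^*\widehat X \cong E_2 \oplus E_3$ together with the symplectic invariance and $\diamond$\nobreakdash-additivity of $i_\omega$ gives $i_\omega(\psi) = i_\omega(\Phi) = i_\omega(\phi_2) + i_\omega(\phi_3)$; and $\phi_3$ is precisely the fundamental solution of System~\eqref{eq:IVP12}, which is the Legendre transform of the Euler--Lagrange equation attached to $\L_3$ on $\widehat X_3$ (cf.~\eqref{eq:B3} and \eqref{eq:Lambda_23}). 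So the first assertion of the theorem needs only to be recorded, exactly as done in Theorem~\ref{thm:morseE2} for the $E_2$ summand.

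For the explicit formula I would specialise to $\omega = 1$ and add the two already-computed contributions. Proposition~\ref{thm:morsekepler} gives
\[
  i_1(\phi_2) = \begin{cases} 0 & \text{if } \alpha \in [1,2), \\ 2 & \text{if } \alpha \in [0,1), \end{cases}
\]
and the computation in Subsection~\ref{subs:MaslovE3} (the formula displayed immediately before Figure~\ref{fig:i1phi3}, obtained from Proposition~\ref{prop:MaslovAbbo} on the non-degenerate part of $\mathit{LS}$, from lower semicontinuity of the Maslov index on the degenerate curve and on $\mathit{SS}$, and from Proposition~\ref{thm:Maslovinst} on $\mathit{SI}$) gives
\[
  i_1(\phi_3) = \begin{cases} 0 & \text{if } \beta \geq \dfrac{36(1-\alpha)}{(\alpha+2)^2}, \\[8pt] 2 & \text{if } 0 < \beta < \dfrac{36(1-\alpha)}{(\alpha+2)^2}. \end{cases}
\]

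The one bookkeeping point is the sign of $1-\alpha$: the threshold curve $\beta = 36(1-\alpha)/(\alpha+2)^2$ meets the parameter rectangle $(0,9]\times[0,2)$ only for $\alpha \in [0,1)$, since for $\alpha \in [1,2)$ one has $36(1-\alpha)/(\alpha+2)^2 \le 0 < \beta$ and hence $i_1(\phi_3)=0$ automatically there (and, consistently, both $\mathit{SI}$ and $\mathit{SS}$ lie in the half-plane $\beta \ge 36(1-\alpha)/(\alpha+2)^2$, because $9(\alpha-2)^2 \ge 36(1-\alpha)$ is equivalent to $\alpha^2 \ge 0$). Consequently the three cases $\{\alpha \in [1,2)\}$, $\{\alpha \in [0,1),\ \beta \ge 36(1-\alpha)/(\alpha+2)^2\}$, $\{0 < \beta < 36(1-\alpha)/(\alpha+2)^2\}$ genuinely partition the rectangle, and on them $i_1(\phi_2)+i_1(\phi_3)$ equals $0+0=0$, $2+0=2$, and $2+2=4$ respectively, which is the asserted value of $\iMor(\gamma_{\alpha,\beta})$, as depicted in Figure~\ref{fig:iMor}.

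There is no serious obstacle remaining at this stage: all of the analytic and symplectic-topological work — the Fredholm property and essential positivity of the second variation, the passage from Morse index to Maslov index via Lemma~\ref{thm:indextheorem}, and the explicit evaluation of $i_1(\phi_2)$ and $i_1(\phi_3)$ — has been carried out in Sections~\ref{sec:variational}, \ref{subs:MaslovE2} and \ref{subs:MaslovE3}, so the proof is a matter of \emph{assembly}. If anything deserves a line of care, it is precisely the verification just made that the three listed cases exhaust the parameter region, so that the piecewise formula is well posed.
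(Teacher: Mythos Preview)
Your proposal is correct and follows essentially the same approach as the paper's own proof, which simply invokes Lemma~\ref{thm:indextheorem}, the calculations of Subsections~\ref{subs:MaslovE2} and~\ref{subs:MaslovE3}, and the $\diamond$-additivity of $i_1$, arguing as in Theorem~\ref{thm:morseE2}. Your write-up is in fact more detailed than the paper's, and the explicit check that the three cases partition the parameter rectangle is a welcome clarification.
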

	
	\begin{proof}
		Arguing as in the proof of Theorem~\ref{thm:morseE2}, it is enough to apply Lemma~\ref{thm:indextheorem}, use the calculations performed in Subsections~\ref{subs:MaslovE3}
		and \ref{subs:MaslovE2} and the additivity of the Maslov index $i_1$.
	\end{proof}

	\subsection{Relation between linear stability and Morse index}
	
	We have shown how both in $E_2$ and in $E_3$ there is a sequence of curves (possibly straight lines) that ``converge'', in a suitable sense, to the boundary of the region of linear stability.
	By virtue of the Index Theorem also the Morse index of the iterates jumps when crossing each of those curves.
	
	Since the angles $\theta_{\alpha, \beta}^{(1)}$ and $\theta_{\alpha, \beta}^{(2)}$ introduced in Subsection~\ref{subsec:omegaE3} cover the whole of $\U$ as $\alpha$ and $\beta$ vary, it may happen that for some values of these parameters
	one of them is a rational multiple of $2\pi$ (so that its exponential is a root of unity).
	When this occurs then the corresponding curve in the plane $(\beta, \alpha)$ is tangent to the stability curve at the point whose coordinates are given by \eqref{eq:tangpoint}. Instead, in the
	case when the aforementioned angles do not give rise to roots of unity, one obtains tangency to the stability curve at some point only after taking the limit as $k \to +\infty$. The reason of this
	fact is simply due to the density of roots of unity in $\U$.

\appendix

%	--------------------------------------------------
	\section{The geometric structure of $\Sp(2)$} \label{app:SP2}
%	--------------------------------------------------
	
	{The symplectic group $\Sp(2)$ captured the attention of I.~Gelfand and V.~Lidskii first, who in 1958 described a toric representation of it \cite{MR0073767,MR0091390}.
	The $\R^3$-cylindrical coordinate representation of $\Sp(2)$ was instead introduced by Y.~Long in 1991 \cite{long1991structure}, and what follows, including Figure~\ref{fig:Sp(2)10} and Figure~\ref{fig:curva}
	(although we re-drew them ourselves), already appeared in \cite[p.898]{long1991structure} and  \cite[Section~2.1]{MR1898560} respectively.}
	
	Every real invertible matrix $A$ can be decomposed in \emph{polar form}
		\[
			A = PO,
		\]
	where $P \= (A\trasp{A})^{1/2}$ is symmetric and positive definite and $O \= P^{-1}A$ is orthogonal. If $A \in \Sp(2)$ then $\det P = 1$ and therefore $P \in \Sp(2)$ as well.
	This entails that $O \in \Sp(2)$; in fact, being orthogonal, it belongs to $\SO(2) \cong \U$, \ie it is a proper rotation:
		\[
			O = \begin{pmatrix}
					\cos \theta & -\sin \theta \\
					\sin \theta & \cos \theta
				\end{pmatrix}.
		\]
	
	Let $u : \Sp(2) \to \U$ be the map which associates every $2 \times 2$ real symplectic matrix with the angle of rotation of its orthogonal part:
		\[
			u(A) = u(PO) \= e^{i\theta}.
		\]
	Now, the eigenvalues of $P$ are all real, positive and reciprocal of each other. Therefore we have that $\tr P \geq 2$ and we may introduce a coordinate $\xi$ ranging in
	$[0, +\infty)$ by setting $\tr P = 2 \cosh \xi$. Hence we can write
		\[
			P = \begin{pmatrix}
					\cosh \xi + a & b \\
					b & \cosh \xi - a
				\end{pmatrix}
		\]
	for some $a, b \in \R$ such that $\cosh^2 \xi - a^2 - b^2 = 1$. Thus $b^2 = \sinh^2 \xi - a^2$, which is meaningful if and only if $\abs{a} \leq \abs{\sinh \xi}$. Hence we are allowed to set
	$a \= \sinh \xi \cos \eta$ for some $\eta \in \R$, so that $b = \sinh \xi \sin \eta$ and $P$ becomes
		\[
			P = \begin{pmatrix}
					\cosh \xi + \sinh \xi \cos \eta & \sinh \xi \sin \eta \\
					\sinh \xi \sin \eta & \cosh \xi - \sinh \xi \cos \eta
				\end{pmatrix}.
		\]
	Setting now $r \= \cosh \xi + \sinh \xi \cos \eta$ and $z \= \sinh \xi \sin \eta$ yields
		\[
			P = \begin{pmatrix}
					r & z \\
					z & \frac{1 + z^2}{r}
				\end{pmatrix}
		\]
	and then every symplectic matrix $M$ of size $2$ can be written as the product
		\begin{equation} \label{eq:sympldec}
			M = \begin{pmatrix}
					r & z \\
					z & \frac{1 + z^2}{r}
				\end{pmatrix}
				\begin{pmatrix}
					\cos \theta & -\sin \theta \\
					\sin \theta & \cos \theta
				\end{pmatrix},
		\end{equation}
	where $(r, \theta, z) \in (0, +\infty) \times [0, 2\pi) \times \R$. Viewing $(r, \theta, z)$ as cylindrical coordinates in $\R^3 \setminus \{ \text{$z$-axis} \}$ we obtain a representation of $\Sp(2)$
	in $\R^3$; more precisely, we obtain a smooth global diffeomorphism $\psi : \Sp(2) \to \R^3 \setminus \{ \text{$z$-axis} \}$. We shall henceforth identify elements in $\Sp(2)$ with their
	image under~$\psi$.
	
	\begin{figure}[tb]
	\begin{center}
		\includegraphics{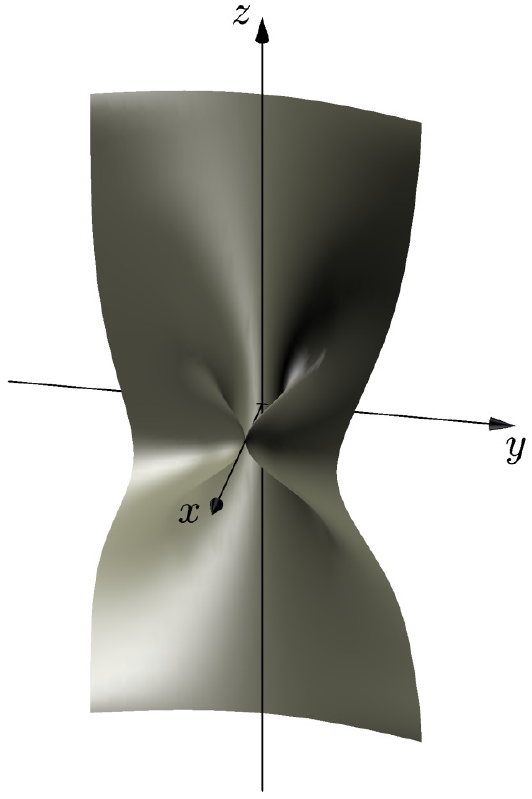}
		\caption{The singular surface $\Sp(2)_1^0$. The representation is in Cartesian coordinates $(x, y, z) = (r \cos\theta, r\sin\theta, z)$.} \label{fig:Sp(2)10}
	\end{center}
	\end{figure}
	
	\begin{figure}[tb]
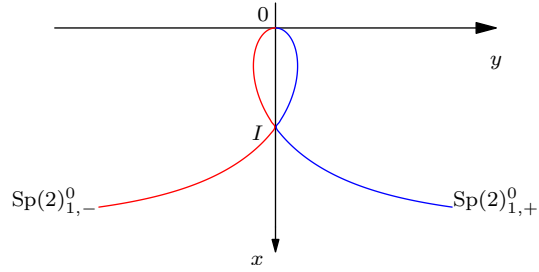

		\centering
		\begin{asy}
			import graph;
		
			size(200, 100, IgnoreAspect);
			
		//	Curva di destra
			real x1(real r) {return r*sqrt(1 - 4*r^2/(1 + r^2)^2);}
			real y1(real r) {return -2*r^2/(1 + r^2);}
		
		//	Curva di sinistra
			real x2(real r) {return -r*sqrt(1 - 4*r^2/(1 + r^2)^2);}
			real y2(real r) {return -2*r^2/(1 + r^2);}					
		
		//	Assi coordinati
			xaxis(xmin = -3, xmax = 3, arrow=EndArrow);
			yaxis(ymin = -2, ymax=0.25);

			draw(graph(x1, y1, 0, 3, operator ..), blue);
			draw(graph(x2, y2, 0, 3, operator ..), red);
			draw((0,-2)--(0,-2.25), arrow = EndArrow);
		
		//	Etichette
			labelx("$\scriptstyle y$", 3);
			label("$\scriptstyle 0$", (0, 0), NW);
			labely("$\scriptstyle I$", -1);
			labely("$\scriptstyle x$", -2.25);
			label("$\scriptstyle \mathrm{Sp}(2)_{1,+}^0$", (3,-1.75));
			label("$\scriptstyle \mathrm{Sp}(2)_{1,-}^0$", (-3,-1.75));
		\end{asy}
		\caption{Intersection of $\Sp(2)_1^0$ with the plane $z = 0$. The representation is in Cartesian coordinates $(x, y) = (r \cos\theta, r\sin\theta)$.} \label{fig:curva}
	\end{figure}
	
	The eigenvalues of a symplectic matrix $M$ written as in \eqref{eq:sympldec} are
		\[
			\lambda_\pm \= \frac{1}{2r} \Bigl[ (1 + r^2 + z^2) \cos \theta \pm \sqrt{(1 + r^2 + z^2)^2 \cos^2 \theta - 4r^2} \Bigr].
		\]
	For $\omega \= e^{i\varphi} \in \U$ we get
		\[
			\begin{split}
				D_\omega(M) & \= (-1)^{n - 1} \omega^{-n} \det(M - \omega I) \bigr|_{n = 1} \\
							& = e^{-i\varphi} \det(M - e^{i\varphi} I) \\
							& = 2 \cos \varphi - \left( r + \frac{1 + z^2}{r} \right) \cos \theta
			\end{split}
		\]
	and define
		\begin{align*}
			\Sp(2)_\omega^\pm & \= \Set{ (r, \theta, z) \in (0, +\infty) \times [0, 2\pi) \times \R | \pm(1 + r^2 + z^2) \cos \theta > 2r \cos \varphi }, \\
			\Sp(2)_\omega^0 & \= \Set{ (r, \theta, z) \in (0, +\infty) \times [0, 2\pi) \times \R | \pm(1 + r^2 + z^2) \cos \theta = 2r \cos \varphi }.
		\end{align*}
	The set $\Sp(2)_\omega^+ \cup \Sp(2)_\omega^-$ is named the \emph{$\omega$-regular part} of $\Sp(2)$, while $\Sp(2)_\omega^0$ is its \emph{$\omega$-singular part}; the former
	corresponds to the subset of $2 \times 2$ symplectic matrices which do not have $\omega$ as an eigenvalue, whereas those matrices admitting $\omega$ in their spectrum belong to the
	latter.
	
	We are particularly interested in $\Sp(2)_1^0$, the singular part of $\Sp(2)$ associated with the eigenvalue $1$, %\ie in those symplectic matrices which have $1$ as an eigenvalue.
	a representation of which is depicted in Figure~\ref{fig:Sp(2)10}. The ``pinched'' point is the identity matrix, and it is the only element satisfying $\dim\ker(M - I) = 2$.
	If we denote by
		\[
			\Sp(2)_{\omega, \pm}^0 \= \Set{ (r, \theta, z) \in \Sp(2)_\omega^0 | \pm \sin \theta > 0 },
		\]
	we see that $\Sp(2)_1^0 \setminus \{ I \} = \Sp(2)_{1, +}^0 \cup \Sp(2)_{1, -}^0$, and each subset is a path-connected component diffeomorphic to $\R^2 \setminus \{ 0 \}$.
	
	The \emph{stratum homotopy property} of the Maslov index states that the Maslov index of a path does not change if to that path is applied a homotopy that maintains each endpoint in its
	original stratum. Thanks to this property we can simplify the visualisation of paths involving $\Sp(2)_1^0$ by considering only their deformation (in the sense just described) onto the
	intersection of the surface with the plane $z = 0$ (which is the curve represented in Figure~\ref{fig:curva}).

		%\red
		{
		\section{Morse index of Fredholm quadratic forms}\label{sec:Fredholmforms}
		In this section we recall the definition of Morse index of Fredholm 
		quadratic forms acting on a (real) separable Hilbert space (for further details see \cite{PW2014}).
		Let $\big(H, \langle \cdot, \cdot \rangle\big)$ be a real separable Hilbert space.
		As usual we denote by $\mathscr L(H)$ the Banach space of all bounded linear 
		operators on $H$ and by $\mathscr F(H)\subset \mathscr L(H)$ the subspace 
		consisting of all (bounded) Fredholm operators. An operator in $\mathscr L(H)$ 
		defined on all of $H$ is self-adjoint if and only if it is symmetric. 
		We denote by $\mathscr F^s(H)$ the subspace of all (bounded) self-adjoint Fredholm 
		operators. 
		For $T \in \mathscr F^s(H)$, if $0$ belongs to the spectrum $\sigma(T)$, then (being 
		$T$ Fredholm) $0$ is an isolated point of $\sigma(T)$ and therefore it follows from 
		the Spectral Decomposition Theorem that there is an orthogonal 
		decomposition of $H$,
		\[
		 H= E_-(T)\oplus \ker T \oplus E_+(T),
		\]
		that reduces the operator $T$ and has the property that 
		\[
		 \sigma(T)\cap (-\infty,0)= \sigma(T|_{E_-(T)}) \qquad \text{ and }  \qquad
		 \sigma(T)\cap (0, +\infty)= \sigma(T|_{E_+(T)}).
		\]
		If $\dim E_-(T)< +\infty$, then $T$ is called \emph{essentially positive} and if it 
		is also an isomorphism its Morse index $\iMor(T)$ is defined as 
		\[
		 \iMor(T)\= \dim E_-(T).
		\]
		Let us consider a bounded quadratic form $q\colon H\to\R$ and we let 
		$b= b_{q}\colon H\times H\to\R$  be the bounded symmetric bilinear form 
		such that 
		\[
		q(u)=b(u,u),\qquad \forall\, u\in H.   
		 \]
		By the Riesz Representation Theorem
		there exists a bounded self-adjoint operator $A_q:H\rightarrow H$  
		such that $b_q(u,v)=\langle A_qu,v\rangle$, $u,v\in H$. 
		\begin{defn}
		We call $q\colon H\to \R$ a  \emph{Fredholm quadratic form} 
		if $A_q$ is Fredholm; \ie $\ker A_q$ is finite-dimensional and $\mathrm{Ran}\ A_q$ is closed.
		\end{defn}
		Recall that the space 
		$Q(H)$ of bounded quadratic  forms is a Banach space with respect to the norm  
		\[
			\norm{q} \= \sup_{\norm{u}=1} \abs{q(u)}.
		\]
		The subset $Q_F(H)$ of all Fredholm quadratic forms is an open subset of $Q(H)$ 
		which is stable under perturbations by weakly continuous quadratic forms. 
		A quadratic form $q\in Q_F(H)$ is called \emph{ non-degenerate} 
		if the corresponding Riesz representation $A_q$ is invertible. 
		\begin{rmk}\label{rmk:anyscalar}
		 It is worth noting that if the representation of a quadratic form on $H$ is either 
		 invertible, Fredholm or compact then so is its representation with respect to any 
		 other Hilbert product on the (real) vector space $H$.
		\end{rmk}
		\begin{prop}\label{thm:weakly-compact}
		 A quadratic form on the Hilbert space $H$ is weakly continuous if and only if one 
		 (and hence any by Remark \ref{rmk:anyscalar}) of its representations is a 
		 compact (self-adjoint) operator in $\mathscr L(H)$.
		\end{prop}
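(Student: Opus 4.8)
The plan is to reduce the statement to the Riesz representation $A_q$ and to the elementary interaction between weak and strong convergence in a Hilbert space. By Remark~\ref{rmk:anyscalar} it suffices to argue for one fixed scalar product, so I would work with the self-adjoint operator $A_q$ for which $b_q(u,v)=\langle A_q u,v\rangle$ and $q(u)=\langle A_q u,u\rangle$ (self-adjointness being automatic from the symmetry of $b_q$), and prove the equivalence ``$q$ weakly continuous $\iff$ $A_q$ compact''.

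For the easy implication I would assume $A_q$ compact and take $u_n\rightharpoonup u$. Then $(u_n)$ is bounded and $A_q u_n\to A_q u$ strongly, so writing $q(u_n)-q(u)=\langle A_q u_n-A_q u,u_n\rangle+\langle A_q u,u_n-u\rangle$ the first summand is dominated by $\norm{A_q u_n-A_q u}\,\sup_n\norm{u_n}\to 0$ and the second tends to $0$ by weak convergence; hence $q(u_n)\to q(u)$, i.e. $q$ is weakly continuous.

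For the converse, I would first use polarisation, $b_q(u,v)=\tfrac14\bigl(q(u+v)-q(u-v)\bigr)$, to upgrade weak continuity of $q$ to the joint statement: $u_n\rightharpoonup u$ and $v_n\rightharpoonup v$ imply $b_q(u_n,v_n)\to b_q(u,v)$. Then, given a bounded sequence $(u_n)$, after passing to a subsequence I may assume $u_n\rightharpoonup u$, and I would set $w_n\=A_q(u_n-u)$. Since $(w_n)$ is bounded but not a priori weakly convergent, I would argue by subsequences: any subsequence of $(w_n)$ admits a further subsequence with $w_{n_k}\rightharpoonup w$, and then $\norm{w_{n_k}}^2=\langle A_q(u_{n_k}-u),w_{n_k}\rangle=b_q(u_{n_k}-u,w_{n_k})\to b_q(0,w)=0$, so $w_{n_k}\to 0$ strongly; as every subsequence has a further subsequence converging to $0$, the whole sequence satisfies $w_n\to 0$, that is $A_q u_n\to A_q u$. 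This shows that $A_q$ maps bounded sets to relatively compact ones, hence $A_q$ is compact.

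The only genuinely delicate point is this last one: because the auxiliary sequence $w_n=A_q(u_n-u)$ carries no obvious weak limit, one cannot apply the joint weak continuity of $b_q$ directly, and the standard ``subsequence of a subsequence'' device --- together with the observation that strong convergence to a fixed limit can be tested along arbitrary subsequences --- is what makes the argument go through. Everything else is routine bookkeeping with bounded operators and weak/strong limits.
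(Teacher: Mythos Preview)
Your proof is correct. The easy direction ($A_q$ compact $\Rightarrow$ $q$ weakly continuous) matches the paper's argument essentially verbatim.

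For the converse, your route differs from the paper's in a noteworthy way. The paper applies polarisation with the \emph{specific} choice $v=A_q u$, obtaining the identity
\[
\norm{A_q u}^2=\tfrac14\bigl[q(u+A_q u)-q(u-A_q u)\bigr],
\]
and then uses only the one-variable weak continuity of $q$: since $A_q$ is bounded it is weak-to-weak continuous, so $u_n\rightharpoonup u_0$ gives $u_n\pm A_q u_n\rightharpoonup u_0\pm A_q u_0$, hence $\norm{A_q u_n}\to\norm{A_q u_0}$; combined with $A_q u_n\rightharpoonup A_q u_0$ this yields strong convergence directly via the standard Hilbert-space fact that weak convergence plus norm convergence implies strong convergence. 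No subsequence extraction is needed. Your approach instead first upgrades to \emph{joint} weak continuity of $b_q$ and then runs a subsequence-of-subsequence argument on $w_n=A_q(u_n-u)$. Both are valid; the paper's trick of plugging $v=A_q u$ into the polarisation formula is more economical and avoids the auxiliary extraction, while your version has the minor bonus of isolating the joint weak continuity of $b_q$ as a reusable intermediate statement.
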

		\begin{proof} Recall that $K$ is compact if and only if it maps weakly convergent sequences  to 
		strongly convergent sequences. 
		We prove $(\Leftarrow)$. Suppose that $K$ is compact and let $(u_n)$ be a sequence in 
		$H$ such that $u_n\stackrel{w}{\rightharpoonup} u_0$. Then $(Ku_n)$ strongly converges 
		to $K u_0$. Thus we getting
		\[\lim_{n \to +\infty} q(u_n)= \lim_{n \to +\infty}\langle Ku_n, u_n\rangle = 
		 \langle Ku_0,u_0 \rangle =q(u_0),
		\]
		so the quadratic form is weakly sequentially continuous (and hence weakly continuous 
		because $H$ is first-countable).
		Now suppose that $q$ is weakly sequentially continuous. By the polarisation identity
		applied to the bilinear form $(u,v)\mapsto \langle Ku, v\rangle$ with $v=KU$ we 
		get 
		\begin{equation}\label{eq:polarization}
		 \langle Ku, Ku\rangle =\dfrac{1}{4}\Big[ \big\langle K(u+Ku), u+Ku \big\rangle - 
		 \big\langle K(u-Ku), u-Ku \big\rangle \Big]\text{ for all } u \in H.
		\end{equation}
		Let us assume that $(u_n)\subset H$ weakly converges to $u_0$. Since 
		$K \in \mathscr L(H)$ then $Ku_n\stackrel{w}{\rightharpoonup} Ku_0$. Thus
		$(u_n \pm Ku_n)$ weakly converges to $u_0\pm Ku_0$. Therefore by the 
		weak sequential continuity of $q$ and by the identity \eqref{eq:polarization} applied 
		to $u=u_n$ and $u=u_0$ we get
		\[
		\lim_{n \to +\infty}\norm{Ku_n}^2= \norm{Ku_0}^2.
		\]
		Since $(Ku_n)$ converges to $Ku_0$ weakly and in norm, it follows that it converges 
		pointwise to $Ku_0$ (strongly) in $H$. Thus $K$ is compact and this conclude the proof. 
		\end{proof}
		From this proposition we immediately get that Fredholm quadratic forms remain Fredholm 
		under perturbations by weakly continuous quadratic forms (since by definition 
		a Fredholm operator is the pre-image of the invertibles of the Calkin algebra under the 
		projection on the quotient) and that any Fredholm 
		quadratic form is weakly continuous perturbation of a non-degenerate Fredholm quadratic 
		form. 
		\begin{defn}\label{def:essentially+quadratic}
		 A Fredholm quadratic form $q:H \to \R$ is said \emph{essentially positive} if 
		 it is the perturbation of a positive definite Fredholm 
		 quadratic form by a weakly continuous quadratic form. 
		\end{defn}
		By this discussion it follows that
		\begin{prop}
		 A Fredholm quadratic form $q$ is essentially positive if and only if it is represented by an 
		 essentially positive self-adjoint Fredholm operator $A_q$. 
		\end{prop}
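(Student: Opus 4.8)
The plan is to prove both implications by passing to the Riesz representation and reducing everything to the spectral picture of the self-adjoint operator $A_q$ together with Proposition~\ref{thm:weakly-compact}, which identifies weakly continuous quadratic forms with compact self-adjoint operators.

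First I would prove the implication $(\Leftarrow)$. Suppose $A_q$ is an essentially positive self-adjoint Fredholm operator, so that $\dim E_-(A_q) < +\infty$ and, $0$ being either absent from or an isolated point of $\sigma(A_q)$, the orthogonal reducing decomposition $H = E_-(A_q) \oplus \ker A_q \oplus E_+(A_q)$ from the Spectral Decomposition Theorem is available, with $\sigma(A_q|_{E_+(A_q)}) \subset [c, +\infty)$ for some $c > 0$. Let $\Pi_-, \Pi_0, \Pi_+$ be the associated orthogonal projections; $\Pi_-$ and $\Pi_0$ have finite rank ($E_-(A_q)$ is finite-dimensional by hypothesis and $\ker A_q$ because $A_q$ is Fredholm). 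Set $B \= A_q \Pi_+ + \Pi_- + \Pi_0$. Since the decomposition reduces $A_q$, the operator $B$ is self-adjoint, equals the identity on $E_-(A_q) \oplus \ker A_q$ and equals $A_q|_{E_+(A_q)}$ on $E_+(A_q)$, so $\sigma(B) \subset [\min\{c,1\}, +\infty)$: hence $B$ is invertible and positive definite, and the quadratic form $q_B(u) \= \langle Bu, u\rangle$ is a positive definite Fredholm quadratic form. Its difference from $q$ is represented by $K \= A_q - B = A_q\Pi_- - \Pi_- - \Pi_0$, which has finite rank, hence is compact and self-adjoint; by Proposition~\ref{thm:weakly-compact} the form $u \mapsto \langle Ku, u\rangle$ is weakly continuous. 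Thus $q(u) = q_B(u) + \langle Ku, u\rangle$ exhibits $q$ as a perturbation of a positive definite Fredholm quadratic form by a weakly continuous one, i.e. $q$ is essentially positive in the sense of Definition~\ref{def:essentially+quadratic}.

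For the implication $(\Rightarrow)$, assume $q$ is essentially positive, so $q = q_0 + k$ with $q_0$ a positive definite Fredholm quadratic form and $k$ weakly continuous. Passing to Riesz representations, $A_q = A_{q_0} + A_k$, where $A_k$ is compact self-adjoint by Proposition~\ref{thm:weakly-compact} and $A_{q_0}$ is a positive definite Fredholm operator, hence an isomorphism with $\sigma(A_{q_0}) \subset [c, +\infty)$ for some $c > 0$ and $E_-(A_{q_0}) = \{0\}$. Since $A_k$ is compact, the invariance of the essential spectrum under compact perturbations (Weyl's theorem) yields $\sigma_{\mathrm{ess}}(A_q) = \sigma_{\mathrm{ess}}(A_{q_0}) \subset [c, +\infty)$; consequently $\sigma(A_q) \cap (-\infty, c/2)$ consists of finitely many eigenvalues of finite multiplicity, so $\dim E_-(A_q) < +\infty$. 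Since $A_q$ is Fredholm (because $q$ is a Fredholm quadratic form), this says precisely that $A_q$ is an essentially positive self-adjoint Fredholm operator.

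I expect the only delicate point to be the bookkeeping around the spectral value $0$: in the first part one must be careful that the reducing decomposition is genuinely orthogonal and that $\sigma(A_q|_{E_+(A_q)})$ is bounded away from $0$ — which is exactly the content of the Spectral Decomposition Theorem for self-adjoint Fredholm operators recalled earlier in this appendix — and in the second part one needs the stability of $\dim E_-$ under compact perturbations. Both are standard facts from the spectral theory of self-adjoint operators, and no lengthy computation is involved; the explicit splitting $A_q = B + K$ constructed above is what makes the equivalence with Definition~\ref{def:essentially+quadratic} transparent.
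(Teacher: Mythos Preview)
Your proof is correct and follows the same route as the paper: pass to the Riesz representation and use Proposition~\ref{thm:weakly-compact} to translate between weakly continuous quadratic forms and compact self-adjoint operators, reducing the statement to the operator-level equivalence ``$\dim E_-(A_q)<+\infty$ $\Leftrightarrow$ $A_q$ is a compact perturbation of a positive definite self-adjoint operator''. The paper simply asserts this operator-level equivalence and invokes Proposition~\ref{thm:weakly-compact}, whereas you actually spell it out --- the explicit splitting $A_q = B + K$ via the spectral projections for $(\Leftarrow)$ and the Weyl essential-spectrum argument for $(\Rightarrow)$ --- so your version is strictly more detailed but not genuinely different in strategy.
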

		\begin{proof}
		 By the Riesz representation theorem there exists a bounded self-adjoint Fredholm operator 
		 $A_q:H \to H$ such that $b_q(u,v)=\langle A_q u,v \rangle$ for all $u,v \in H$. Now since 
		 a bounded self-ajoint Fredholm operator is essentially positive if and only if it is a 
		 self-adjoint compact perturbation of a self-adjoint positive definite (and hence 
		 Fredholm, being invertible) operator, the conclusion follows by applying Proposition~\ref{thm:weakly-compact}.
		\end{proof}
		\begin{defn}\label{def:Morseindexform}
		 The \emph{Morse index of an essentially positive Fredholm quadratic form} $q:H \to \R$ 
		 is the Morse index of the (self-adjoint) bounded Fredholm operator $A_q: H \to H$ 
		 uniquely determined by the Riesz Representation Theorem, \ie
		 \[
		  b_q(u,v)=\langle A_q u, v\rangle \text{ for all } u,v \in H
		 \]
		 where $b_q$ is the bounded symmetric form induced by $q$ through the polarisation 
		 identity. 
		\end{defn}
		\begin{rmk}
		 It is worth noting that it is possible to show that the \emph{Morse index} of an 
		 essentially positive Fredholm quadratic form depends only on the quadratic form and 
		 not on the Hilbert structure on $H$. 
		\end{rmk}
	}

% % % % % % % % % % % % % % % % % % % % % % % % % % % % % % % % % % % % % % % % % % % % % % % % % % % % % % % 

%	~~~~~~~~~~~~~~~~~~~~~~~~~~~~~~~~~~~~~~~~~~~~~~~~~~~~~~~~~~~~~~~~~~~~~~~~~
%	~~~~~~~~~~~~~~~~~~~~~~~~~~~~~~~~~~~~~~~~~~~~~~~~~~~~~~~~~~~~~~~~~~~~~~~~~

%	Bibliografia con BibLaTeX:
	
	\addcontentsline{toc}{section}{\refname}	% per includere la voce "References" nell'indice
%	\nocite{*}							% per includere nella bibliografia anche gli elementi non citati esplicitamente
	\bibliographystyle{amsalpha}			% stile dell'AMS, per etichette del tipo [Mor07]
	\bibliography{MyDatabase}			% per includere il file con la bibliografia
% % \bibliography{BarJadPor13_ref_2}

%	\newpage
	\vfill
	
	\vspace{1cm}
	\noindent
	\textsc{Vivina L.~Barutello}\\
	Dipartimento di Matematica \lq\lq G.~Peano\rq\rq\\
	Universit\`a degli Studi di Torino\\
	Via Carlo Alberto, 10 \\
	10123 Torino \\
	Italy\\
	E-mail: \email{vivina.barutello@unito.it}

	\vspace{1cm}
	\noindent
	\textsc{Riccardo Danilo Jadanza}\\
	Dipartimento di Scienze Matematiche \lq\lq J.-L.~Lagrange\rq\rq\ (DISMA)\\
	Politecnico di Torino\\
	Corso Duca degli Abruzzi, 24\\
	10129 Torino\\
	Italy\\
	E-mail: \email{riccardo.jadanza@polito.it}

	\vspace{1cm}
	\noindent
	\textsc{Alessandro Portaluri}\\
	Dipartimento di Scienze Agrarie, Forestali e Alimentari (DISAFA)\\
	Universit\`a degli Studi di Torino\\
	Largo Paolo Braccini, 2\\
	10095 Grugliasco (TO)\\
	Italy\\
	E-mail: \email{alessandro.portaluri@unito.it}\\
	Website: \href{http://aportaluri.wordpress.com}{\textsf{http://aportaluri.wordpress.com}}

\end{document}